\documentclass{amsart}
\usepackage{amsmath, amssymb, amscd, mathtools, graphicx, amsfonts, enumerate, mathrsfs, xcolor}

\newtheorem{theorem}{Theorem}[section]
\newtheorem{proposition}[theorem]{Proposition}
\newtheorem{lemma}[theorem]{Lemma}
\newtheorem{corollary}[theorem]{Corollary}

\theoremstyle{definition}
\newtheorem{definition}[theorem]{Definition}
\newtheorem{example}[theorem]{Example}
\newtheorem{remark}[theorem]{Remark}

\numberwithin{equation}{section}

\newcommand{\N}{\mathbb{N}}                        
\newcommand{\Z}{\mathbb{Z}}                        
\newcommand{\K}{\mathbb{K}}                        
\newcommand{\R}{\mathbb{R}}                        
\newcommand{\C}{\mathbb{C}}                        
\newcommand{\vp}{\varphi}                          
\newcommand{\OO}{\mathcal{O}}                      
\newcommand{\supp}{\mathrm{supp}}                  
\newcommand{\inexp}{\mathrm{exp}}                  
\newcommand{\NN}{\mathfrak{N}}                     
\newcommand{\LL}{\Lambda}
\newcommand{\mm}{\mathfrak{m}}                     
\newcommand{\nn}{\mathfrak{n}}                     
\newcommand{\tx}{\tilde{x}}

\begin{document}

\title[Equisingular algebraic approximation of analytic germs]{Equisingular algebraic approximation of real and complex analytic germs}

\author{Janusz Adamus}
\address{Department of Mathematics, The University of Western Ontario, London, Ontario, Canada N6A 5B7}
\email{jadamus@uwo.ca}
\author{Aftab Patel}
\address{Department of Mathematics, The University of Western Ontario, London, Ontario, Canada N6A 5B7}
\email{apate378@uwo.ca}
\thanks{J. Adamus's research was partially supported by the Natural Sciences and Engineering Research Council of Canada}

\subjclass[2010]{32S05, 32S10, 32B99, 32C07, 14P15, 14P20, 13H10, 13C14}
\keywords{Cohen-Macaulay, singularity, algebraic approximation, Hilbert-Samuel function}

\begin{abstract}
We show that a Cohen-Macaulay analytic singularity can be arbitrarily closely approximated by germs of Nash sets which are also Cohen-Macaulay and share the same Hilbert-Samuel function. We also prove that every analytic singularity is topologically equivalent to a Nash singularity with the same Hilbert-Samuel function.
\end{abstract}
\maketitle

\section{Introduction}
\label{sec:intro}

One of the central problems in analytic geometry concerns algebraic approximation of analytic objects. In the present paper, we consider approximation of a (real or complex) analytic set germ $(X,a)$ by germs of Nash, or even algebraic, sets which are equisingular with $(X,a)$ in the sense of the Hilbert-Samuel function. Recall that, for an analytic germ $(X,a)$, the \emph{Hilbert-Samuel function} $H_{X,a}$ is defined as
\[
H_{X,a}(\eta)=\dim_\K\frac{\OO_{X,a}}{\mm^{\eta+1}}\,,\qquad\mathrm{for\ all\ }\eta\in\N\,,
\]
where $\OO_{X,a}$ is the local ring of $X$ at $a$, with the maximal ideal $\mm$. The Hilbert-Samuel function encodes many important algebro-geometric properties of the germ and may be regarded as a measure of its singularity. It plays a central role in resolution of singularities (see \cite{BM2}).

In the first part of this paper, we deal with singularities of special types. Namely, those whose local ring is Cohen-Macaulay, or even better, a complete intersection. We prove that a complete intersection singularity can be arbitrarily closely approximated by germs of algebraic sets which are also complete intersections and share the same Hilbert-Samuel function (Theorem~\ref{thm:CI-approx}). Polynomial approximation is not possible, in general, for Cohen-Macaulay singularities (see Example~\ref{ex:no-CM-fin-det}). The next best thing is approximation by Nash sets, that is, by sets consisting of analytic branches of algebraic sets. In Theorem~\ref{thm:CM-approx}, we show that a Cohen-Macaulay singularity can be arbitrarily closely approximated by germs of Nash sets which are also Cohen-Macaulay and share the same Hilbert-Samuel function.
These results may be combined with the main theorem of the second part, which works for arbitrary singularities.

The second part of the paper is concerned with approximation of an analytic singularity $(X,0)$ by Nash germs which are homeomorphic with $(X,0)$. We give a variant of Mostowski's theorem \cite{Most}, Theorem~\ref{thm:main}, showing that every analytic germ $(X,0)\subset(\K^n,0)$ can be arbitrarily closely approximated by a Nash germ $(\hat{X},0)\subset(\K^n,0)$, such that the pairs $(\K^n,X)$ and $(\K^n,\hat{X})$ are topologically equivalent near zero, and the Hilbert-Samuel functions $H_{X,0}$ and $H_{\hat{X},0}$ coincide.
\medskip

Let $\K=\R$ or $\C$, let $x=(x_1,\dots,x_n)$ and let $\K\{x\}$ denote the ring of convergent power series in variables $x$. If $(X,0)$ is an analytic germ at $0$ in $\K^n$, its local ring $\OO_{X,a}$ is of the form $\K\{x\}/I$ for some ideal $I$ in $\K\{x\}$.
One says that the germ $(X,0)$ is \emph{Nash} if the ideal $I$ can be generated by algebraic power series, that is, by series algebraic over the ring of polynomials $\K[x]$.

When dealing with questions about the Hilbert-Samuel function of the ring $\K\{x\}/I$, it is convenient to work with a so-called diagram of initial exponents of $I$, a combinatorial representation of the ideal $I$, denoted $\NN(I)$, which we recall in Section~\ref{sec:diagram}.
Indeed, the Hilbert-Samuel function of $\K\{x\}/I$ may be read off from the sub-level sets of (the complement of) $\NN(I)$ (Lemma~\ref{lem:HS-diagram-complement}). The diagram itself is, in turn, uniquely determined by a standard basis of $I$, which is a special generating set of $I$ (see Section~\ref{sec:s-series}). Our key tool in establishing Hilbert-Samuel equisingularity of a given germ and its approximants is a theorem of Becker \cite{Be1}, which gives a criterion for a collection $\{F_1,\dots,F_t\}\subset I$ to form a standard basis of $I$ in terms of finitely many equations that depend polynomially on the $F_i$. It is therefore well suited for an application of the classical Algebraic Artin Approximation.

We call $(X,0)$ a Cohen-Macaulay (resp. complete intersection) singularity when the local ring $\OO_{X,0}$ is Cohen-Macaulay (resp. a complete intersection); see Section~\ref{sec:CI} for definitions.
The finite determinacy of the Hilbert-Samuel function of a complete intersection follows already from the work of Srinivas and Trivedi \cite{ST}. We give a new proof of this fact here, because it can also be applied in the Cohen-Macaulay case, which is new. Roughly speaking, we combine the equivalence of Cohen-Macaulayness and flatness (Remark~\ref{rem:CM-flat}) with a corollary to Hironaka's flatness criterion (Proposition~\ref{prop:flatness-diagram}), to show that with respect to certain total ordering on $\N^n$ the diagrams of $I$ and its suitable approximation $I_\mu$ coincide. We then show (Proposition~\ref{prop:HS-stability}) that this equality implies equality of the diagrams with respect to the standard ordering, and hence equality of the Hilbert-Samuel functions.

Finally, in the proof of Theorem~\ref{thm:main}, we combine the above Becker criterion with the original strategy of Mostowski, based on P{\l}oski's parametrized Artin approximation \cite{Plo} and a theorem of Varchenko stating that the algebraic equisingularity of Zariski implies the topological equisingularity \cite{Var1}. We use the modern exposition of Mostowski's theorem, due to Bilski, Parusi{\'n}ski and Rond \cite{BPR}, where the original P{\l}oski theorem is replaced with a more powerful Theorem~\ref{thm:BPR}.
\medskip

The paper is structured as follows: In the next section, we recall Hironaka's division algorithm and its consequences for the diagrams of initial exponents. We also prove Proposition~\ref{prop:flatness-diagram}, used in the treatment of Cohen-Macaulay singularities.
In Sections~\ref{sec:s-series} and~\ref{sec:APP}, we recall our two main tools: the s-series criterion and a nested parametrized algebraic approximation theorem.
Sections~\ref{sec:reduction} and~\ref{sec:approx} contain various auxiliary results needed in the proofs of Theorems~\ref{thm:CI-approx} and~\ref{thm:CM-approx}. The latter are proved in Sections~\ref{sec:CI} and~\ref{sec:CM} respectively. The remainder of the paper is devoted to the proof of Theorem~\ref{thm:main}.
\medskip

\section{Hironaka's division algorithm, diagram of initial exponents, and flatness}
\label{sec:diagram}

Let $\K=\R$ or $\C$. Let $A$ denote the field $\K$ or the ring $\K\{y\}$ of convergent power series in variables $y=(y_1,\dots,y_m)$.
Let $A\{z\}$ denote the ring of convergent power series in variables $z=(z_1,\dots,z_k)$ with coefficients in $A$ (i.e., $\K\{z\}$ or $\K\{y,z\}$, depending on $A$).
We will write $z^\alpha$ for $z_1^{\alpha_1}\dots z_k^{\alpha_k}$, where $\alpha=(\alpha_1,\dots,\alpha_k)\in\N^k$.

The mapping $A\ni F(y)\mapsto F(0)\in\K=A\otimes_\K\!\K$ of evaluation of the $y$ variables at $0$ induces an evaluation mapping
\[
A\{z\}\ni F=\sum_{\alpha\in\N^k}F_\alpha(y)z^\alpha\,\mapsto\,F(0)=\sum_{\alpha\in\N^k}F_\alpha(0)z^\alpha\in\K\{z\}\,.
\]
(In case $A=\K$, this is just the identity mapping.)
For an ideal $I$ in $A\{z\}$, define $I(0)\coloneqq\{F(0):F\in I\}$ in $\K\{z\}$, the \emph{evaluated ideal}.

A \emph{positive linear form} on $\K^k$ is given as $\LL(\alpha)=\sum_{j=1}^k\lambda_j\alpha_j$, for some $\lambda_j>0$.
Given such $\LL$, we will regard $\N^k$ as endowed with the total ordering defined by the lexicographic ordering of the $(k+1)$-tuples $(\LL(\alpha),\alpha_k,\dots,\alpha_1)$. The \emph{support} of a non-zero $F=\sum_{\alpha\in\N^k}F_\alpha z^\alpha\in A\{x\}$ is defined as \,$\supp{F}=\{\alpha\in\N^k:F_\alpha\neq0\}$.
The \emph{initial exponent} of $F$, denoted $\inexp_\LL{F}$, is the minimum (with respect to the above total ordering) over all $\alpha\in\supp{F}$. 
Similarly,
\[
\supp{F(0)}=\{\alpha\in\N^k:F_\alpha(0)\neq0\}\quad\mathrm{and}\quad\inexp_\LL{F(0)}={\min}_\LL\{\alpha\in\supp{F(0)}\}\,,
\]
for the evaluated series. We have $\supp{F(0)}\subset\supp{F}$, and hence $\inexp_\LL{F}\leq\inexp_\LL{F(0)}$.
We will write simply $\inexp{F}$ and $\inexp{F(0)}$ instead of $\inexp_\LL{F}$ and $\inexp_\LL{F(0)}$, when $\LL(\alpha)=|\alpha|=\alpha_1+\dots+\alpha_k$.
\medskip

We now recall Hironaka's division algorithm.

\begin{theorem}[{\cite[Thm.\,3.1,\,3.4]{BM1}}]
\label{thm:Hir-div}
Let $\LL$ be any positive linear form on $\K^k$.
Let $G_1,\dots,G_t\in A\{z\}$, and let $\alpha^i\coloneqq\exp_\LL{G_i(0)}$, $1\leq i\leq t$. Then, for every $F\in A\{z\}$, there exist unique $Q_1,\dots,Q_t,R\in A\{z\}$ such that
\begin{gather}
\label{eq:remainder}
F=\sum_{i=1}^tQ_iG_i+R,\\
\notag
\alpha^i+\supp{Q_i}\subset\Delta_i,\ 1\leq i\leq t,\quad\mathrm{and}\quad\supp{R}\subset\Delta,
\end{gather}
where
\[
\Delta_1\coloneqq\alpha^1+\N^k,\qquad\Delta_i\coloneqq(\alpha^i+\N^k)\setminus\bigcup_{j=1}^{i-1}\Delta_j\quad\mathrm{\ for\ }i\geq2,
\]
and $\Delta\coloneqq\N^k\setminus\bigcup_{i=1}^t\Delta_i$.
\end{theorem}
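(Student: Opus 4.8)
The plan is to recast the assertion as the bijectivity of a single $A$-linear division operator, to prove injectivity (uniqueness of the decomposition) by a leading-exponent argument using nothing but the disjointness of the sets $\Delta_i$ and $\Delta$, and to prove surjectivity (existence) by solving the division recursively at the formal level and then establishing convergence by the method of majorants.

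\emph{Setup.} The sets $\Delta_1,\dots,\Delta_t,\Delta$ partition $\N^k$ directly from their definition, so I would fix this first. Put $c_i:=(G_i)_{\alpha^i}\in A$; the hypothesis $\alpha^i=\inexp_\LL G_i(0)$ says precisely that $c_i(0)\neq0$, i.e. that $c_i$ is a unit of $A$. Introduce the $A$-modules $M_i:=\{Q\in A\{z\}:\alpha^i+\supp Q\subset\Delta_i\}$ and $M_0:=\{R\in A\{z\}:\supp R\subset\Delta\}$ and the $A$-linear map
\[
\Phi\colon M_1\times\dots\times M_t\times M_0\longrightarrow A\{z\},\qquad (Q_1,\dots,Q_t,R)\longmapsto\sum_{i=1}^tQ_iG_i+R,
\]
so that the theorem asserts exactly that $\Phi$ is bijective. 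Writing $\Phi=\Phi_0+\Psi$ with $\Phi_0(Q_\bullet,R)=\sum_ic_iz^{\alpha^i}Q_i+R$ and $\Psi(Q_\bullet,R)=\sum_iQ_i\bigl(G_i-c_iz^{\alpha^i}\bigr)$, one checks by reading off coefficients that $\Phi_0$ is bijective: its inverse sends $F=\sum_\alpha F_\alpha z^\alpha$ to the tuple with $R=\sum_{\alpha\in\Delta}F_\alpha z^\alpha$ and $Q_i=c_i^{-1}\sum_{\alpha\in\Delta_i}F_\alpha z^{\alpha-\alpha^i}$, which lies in $M_1\times\dots\times M_t\times M_0$ because $c_i$ is a unit and $\Delta_i\subset\alpha^i+\N^k$. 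Hence it suffices to show that $\mathrm{id}+\Phi_0^{-1}\Psi$ is bijective.

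\emph{Uniqueness.} Assume first $A=\K$ and suppose $\sum_iP_iG_i+S=0$ with $P_i\in M_i$, $S\in M_0$, not all zero. The total ordering on $\N^k$ is compatible with addition, $\K\{z\}$ is a domain, and $G_i(0)=G_i$, so $\inexp_\LL(P_iG_i)=\inexp_\LL P_i+\inexp_\LL G_i=\inexp_\LL P_i+\alpha^i\in\alpha^i+\supp P_i\subset\Delta_i$ when $P_i\neq0$, while $\inexp_\LL S\in\supp S\subset\Delta$ when $S\neq0$. As the $\Delta_i$ and $\Delta$ are pairwise disjoint, the smallest among these finitely many initial exponents is attained by a unique summand, whose leading coefficient therefore survives in $\sum_iP_iG_i+S$ — a contradiction. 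For $A=\K\{y\}$ I would bootstrap: for a relation $\sum_iQ_iG_i+R=0$ one shows $Q_i,R\in\mm_A^nA\{z\}$ for every $n$ by induction on $n$, the step $n\to n+1$ being obtained by reducing the relation modulo $\mm_A^{n+1}$ and expanding in the $\K\{z\}$-basis of $(\mm_A^n/\mm_A^{n+1})\otimes_\K\K\{z\}$ given by the degree-$n$ monomials in $y$, then applying the field case to each graded component; Krull's intersection theorem then forces $Q_i=R=0$.

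\emph{Existence and convergence.} Over $A[[z]]$ I would determine the coefficients of the $Q_i$ and $R$ recursively in increasing order of the monomial ordering, refined, when $A=\K\{y\}$, by increasing $\mm_A$-adic order, since the exponents of $\supp G_i$ lying below $\alpha^i$ cannot lie in $\supp G_i(0)$ and so have coefficients in $\mm_A$, feeding into the recursion only through strictly lower $\mm_A$-orders. Because each $c_i$ is a unit, every step uniquely solves for the next unknown, yielding a formal solution; equivalently, $\mathrm{id}+\Phi_0^{-1}\Psi$ is formally invertible via its Neumann series. The genuine obstacle is \emph{convergence}: given $F$ analytic on a polydisc one must produce $Q_i,R$ analytic on a smaller polydisc — contracted in the $y$-variables as well, when $A=\K\{y\}$ — together with explicit norm estimates. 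I would carry this out by the method of majorants, choosing radii $s,r$ small enough that $\|c_i^{-1}\|_s$ is controlled and that, in a norm $\|H\|_{s,r}=\sum|h_{\gamma\alpha}|s^{|\gamma|}r^{\LL(\alpha)}$ weighted according to $\LL$, the series $\sum_{\nu\geq0}(-1)^\nu(\Phi_0^{-1}\Psi)^\nu$ converges, so that $\Phi$ itself is invertible on the corresponding Banach space with the formal $Q_i,R$ actually convergent. The delicate point — the technical heart of the theorem — is that the terms of $G_i$ of the \emph{same} $\LL$-degree as $\alpha^i$ are not small in a naive norm; they are controlled by the observation that one division step pushes such a term to a strictly later monomial of that same $\LL$-degree, of which there are only finitely many, so these contributions cannot accumulate. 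This is Hironaka's convergence mechanism, and it is the step I expect to require the most care.
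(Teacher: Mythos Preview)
The paper does not prove this theorem: it is stated with a citation to \cite[Thm.\,3.1,\,3.4]{BM1} and used as a black box throughout. There is therefore no ``paper's own proof'' to compare against.

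That said, your outline is sound and is essentially the standard proof one finds in the cited reference. The reduction to bijectivity of $\Phi$, the splitting $\Phi=\Phi_0+\Psi$ with $\Phi_0$ explicitly invertible, the leading-exponent argument for uniqueness over $\K$, and the $\mm_A$-adic bootstrap for uniqueness over $\K\{y\}$ are all correct; your observation that reducing modulo $\mm_A^{n+1}$ replaces $G_i$ by $G_i(0)$ (so that the field case applies with the \emph{same} $\alpha^i$) is exactly the point. For existence you correctly locate the only genuine difficulty in the convergence estimate, and your description of the mechanism --- terms of $G_i$ at the same $\LL$-level as $\alpha^i$ move to strictly later monomials on that level, of which there are only finitely many since $\{\beta:\LL(\beta)=c\}$ is finite --- is the right idea. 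In an actual write-up you would need to make precise how this finiteness translates into a bound independent of the iteration step (e.g.\ by grouping the Neumann series in blocks of length bounded by the maximal level-set size, or equivalently by Grauert's trick of perturbing $\LL$ to make the $\alpha^i$ strict minima), but as a plan the proposal is correct.
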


The \emph{diagram of initial exponents} of an ideal $I$ in $A\{z\}$, relative to $\LL$, is defined as
\[
\NN_\LL(I)=\{\inexp_\LL{F}:F\in I\setminus\{0\}\}.
\]
Similarly, for the evaluated ideal $I(0)$, we set
\[
\NN_\LL(I(0))=\{\inexp_\LL{F(0)}:F\in I, F(0)\neq0\}.
\]
We will write $\NN(I)$ and $\NN(I(0))$ instead of $\NN_\LL(I)$ and $\NN_\LL(I(0))$, when $\LL(\alpha)=|\alpha|=\alpha_1+\dots+\alpha_k$.

Note that every diagram $\NN_\LL(I)$ satisfies the equality $\NN_\LL(I)+\N^k=\NN_\LL(I)$. (Indeed, for $\alpha\in\NN_\LL(I)$ and $\gamma\in\N^k$, one can choose $F\in I$ with $\inexp_\LL{F}=\alpha$; then $z^\gamma F\in I$, hence $\alpha+\gamma=\inexp_\LL(z^\gamma F)$ is in $\NN_\LL(I)$.)

\begin{remark}
\label{rem:vertices}
It is easy to show that, for every ideal $I$ in $A\{z\}$ and for every positive linear form $\LL$, there exists a unique smallest (finite) set $V_\LL(I)\subset\NN_\LL(I)$ such that $V_\LL(I)+\N^k=\NN_\LL(I)$ (see, e.g., \cite[Lem.\,3.8]{BM1}). The elements of $V_\LL(I)$ are called the \emph{vertices} of the diagram $\NN_\LL(I)$.
\end{remark}

\begin{corollary}[{\cite[Cor.\,3.9]{BM1}}]
\label{cor:Hir-diagram}
Let $\LL$ be any positive linear form on $\K^k$. Let $I$ be an ideal in $\K\{z\}$, and let $\alpha^1,\dots,\alpha^t\in\N^k$ be the vertices of the diagram $\NN_\LL(I)$. Choose $G_i\in I$ such that $\inexp_\LL{G_i}=\alpha^i$, $1\leq i\leq t$, and let $\{\Delta_i,\Delta\}$ denote the partition of $\N^k$ determined by the $\alpha^i$, as above. Then, $\NN_\LL(I)=\bigcup_{i=1}^t\Delta_i$ and the $G_i$ generate the ideal $I$.
\end{corollary}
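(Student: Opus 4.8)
The plan is to deduce both assertions from Hironaka's division algorithm (Theorem~\ref{thm:Hir-div}) applied to the chosen generators $G_1,\dots,G_t$. First I would verify the inclusion $\bigcup_{i=1}^t\Delta_i\subseteq\NN_\LL(I)$: since $\inexp_\LL G_i=\alpha^i$ lies in $\NN_\LL(I)$ for each $i$, and $\NN_\LL(I)+\N^k=\NN_\LL(I)$ as noted after Theorem~\ref{thm:Hir-div}, we get $\alpha^i+\N^k\subseteq\NN_\LL(I)$, and $\Delta_i\subseteq\alpha^i+\N^k$ by construction; hence $\bigcup_i\Delta_i\subseteq\NN_\LL(I)$. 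By the definition of the vertices $V_\LL(I)=\{\alpha^1,\dots,\alpha^t\}$ in Remark~\ref{rem:vertices} we also have $\NN_\LL(I)=\bigcup_i(\alpha^i+\N^k)$, so $\bigcup_i\Delta_i=\bigcup_i(\alpha^i+\N^k)=\NN_\LL(I)$ (the first equality because the $\Delta_i$ are obtained from the $\alpha^i+\N^k$ by removing only points already covered by earlier $\Delta_j$'s). This gives the first claim, $\NN_\LL(I)=\bigcup_{i=1}^t\Delta_i$.

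For the second claim, that the $G_i$ generate $I$, I would take an arbitrary $F\in I$ and divide it by $G_1,\dots,G_t$ using Theorem~\ref{thm:Hir-div}, obtaining $F=\sum_{i=1}^tQ_iG_i+R$ with $\supp R\subseteq\Delta=\N^k\setminus\bigcup_i\Delta_i$. Since each $G_i\in I$, we have $R=F-\sum_iQ_iG_i\in I$. If $R\neq0$, then $\inexp_\LL R\in\NN_\LL(I)=\bigcup_i\Delta_i$, contradicting $\inexp_\LL R\in\supp R\subseteq\Delta$. Hence $R=0$, so $F=\sum_iQ_iG_i$ lies in the ideal generated by $G_1,\dots,G_t$; as $F\in I$ was arbitrary and each $G_i\in I$, the two ideals coincide.

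The only real subtlety — and the step I would be most careful about — is the set-theoretic identity $\bigcup_i\Delta_i=\bigcup_i(\alpha^i+\N^k)$, which relies on the precise inductive definition $\Delta_i=(\alpha^i+\N^k)\setminus\bigcup_{j<i}\Delta_j$: one checks by induction on $i$ that $\bigcup_{j\le i}\Delta_j=\bigcup_{j\le i}(\alpha^j+\N^k)$, the inductive step being immediate since $\Delta_i\cup\bigcup_{j<i}\Delta_j=(\alpha^i+\N^k)\cup\bigcup_{j<i}\Delta_j$. Everything else is a direct application of the uniqueness and support properties in Theorem~\ref{thm:Hir-div} together with the shift-invariance $\NN_\LL(I)+\N^k=\NN_\LL(I)$.
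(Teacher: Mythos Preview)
Your proof is correct and follows essentially the same route as the paper's: both obtain $\NN_\LL(I)=\bigcup_i\Delta_i$ directly from the definition of the vertices (Remark~\ref{rem:vertices}), and both deduce generation from Hironaka division by observing that the remainder $R$ lies in $I$ with $\supp R\subset\Delta=\N^k\setminus\NN_\LL(I)$, forcing $R=0$. You simply spell out the set-theoretic identity $\bigcup_i\Delta_i=\bigcup_i(\alpha^i+\N^k)$ that the paper treats as immediate.
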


\begin{proof}
The equality $\NN_\LL(I)=\bigcup_{i=1}^t\Delta_i$ follows immediately from Remark~\ref{rem:vertices}.
According to Theorem~\ref{thm:Hir-div}, any $F\in\K\{z\}$ can be written as $F=\sum_{i=1}^tQ_iG_i+R_F$, where $\supp{R_F}\subset\Delta$.
Therefore, $F\in I$ if and only if $R_F\in I$. But $\supp{R_F}\subset\Delta=\N^k\setminus\NN_\LL(I)$, hence $R_F\in I$ if and only if $R_F=0$.
\end{proof}

The remainder of this section will be concerned with the algebraic notion of flatness. Recall that a module $M$ over a Noetherian ring $A$ is called \emph{flat} when, for every exact sequence
\[
0\to N'\to N\to N''\to0
\]
of $A$-modules, the sequence
\[
0\to N'\otimes_A\!M\to N\otimes_A\!M\to N''\otimes_A\!M\to0
\]
is also exact.
The following result of Hironaka expresses flatness in terms of his division algorithm.

\begin{theorem}[{\cite[Thm.\,7.9]{BM1}}]
\label{thm:Hir-flat}
Let $I$ be an ideal in $A\{z\}$. Let $\LL$ be any positive linear form on $\K^k$, and let $\alpha^1,\dots,\alpha^t$ be the vertices of $\NN_\LL(I(0))$. Let $G_1,\dots,G_t\in I$ be such that $\inexp_\LL{G_i(0)}=\alpha^i$, $1\leq i\leq t$. Then, the following are equivalent:
\begin{itemize}
\item[(i)] $A\{z\}/I$ is flat as an $A$-module
\item[(ii)] For any $F\in I$, the remainder of $F$ after division~\eqref{eq:remainder} by $G_1,\dots,G_t$ is zero.
\end{itemize}
\end{theorem}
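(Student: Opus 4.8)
The plan is to identify $M:=A\{z\}/I$, as an $A$-module, with $N/(N\cap I)$, where $N\subset A\{z\}$ denotes the set of series supported on $\Delta$, and then to observe that \emph{both} (i) and (ii) are equivalent to the vanishing of $N\cap I$. One may assume $A=\K\{y\}$ (if $A=\K$, everything is flat and the statement is just Corollary~\ref{cor:Hir-diagram}); let $\mm$ be the maximal ideal of $A$.

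First, by Theorem~\ref{thm:Hir-div} the remainder map $\rho\colon A\{z\}\to A\{z\}$, $F\mapsto R_F$, is $A$-linear and idempotent — this is immediate from the uniqueness in \eqref{eq:remainder}, since sums and $A$-multiples of decompositions satisfying the support constraints again satisfy them. Hence $A\{z\}=P\oplus N$ as $A$-modules, with $P=\ker\rho=\{\sum_iQ_iG_i:\alpha^i+\supp Q_i\subset\Delta_i\}$ and $N=\mathrm{im}\,\rho=\{R:\supp R\subset\Delta\}$; in particular $N$ is a direct summand of $A\{z\}=\K\{y,z\}$, which is flat over $A$, so $N$ is flat over $A$. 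Since $P\subset J:=(G_1,\dots,G_t)A\{z\}\subset I$, we get $A\{z\}=P+N\subset I+N$, so the composite $N\hookrightarrow A\{z\}\twoheadrightarrow M$ is onto with kernel $N\cap I$; that is, $M\cong N/(N\cap I)$. Moreover, $N\cap I=0$ is exactly condition (ii): if $H\in N\cap I$ then $H=\sum_i 0\cdot G_i+H$ is the division of $H$, so $R_H=H$, which (ii) forces to be $0$; conversely, if $F\in I$ then $R_F=F-\sum_iQ_iG_i\in I\cap N$, so $R_F=0$. This already gives (ii)$\Rightarrow$(i): if $N\cap I=0$ then $M\cong N$ is flat.

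For (i)$\Rightarrow$(ii), assume $M$ flat over $A$. Then $\mathrm{Tor}_1^A(A/\mm,M)=0$, so tensoring $0\to N\cap I\to N\to M\to 0$ with $A/\mm$ preserves exactness:
\[
0\to (N\cap I)/\mm(N\cap I)\to N/\mm N\to M/\mm M\to 0.
\]
Evaluation at $y=0$ commutes with division by the $G_i$ (the support constraints persist under $Q_i\mapsto Q_i(0)$, and $\inexp_\LL G_i(0)=\alpha^i$ by hypothesis), so it carries $A\{z\}=P\oplus N$ onto $\K\{z\}=\bar P\oplus\bar N$, where $\bar N\subset\K\{z\}$ is the subspace of series supported on $\Delta$; since $\mm A\{z\}=\mm P\oplus\mm N$ under this decomposition, $N\cap\mm A\{z\}=\mm N$, and hence $N/\mm N\cong\bar N$. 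On the other hand $M/\mm M\cong A\{z\}/(I+\mm A\{z\})\cong\K\{z\}/I(0)$ compatibly, so the middle map above becomes $\bar N\hookrightarrow\K\{z\}\twoheadrightarrow\K\{z\}/I(0)$. This map is an isomorphism: $\bar P\subset I(0)$ (each $G_i(0)\in I(0)$, an ideal) gives $\K\{z\}=\bar P+\bar N\subset I(0)+\bar N$, hence surjectivity; and, since $\alpha^1,\dots,\alpha^t$ are the vertices of $\NN_\LL(I(0))$ with $\inexp_\LL G_i(0)=\alpha^i$, Corollary~\ref{cor:Hir-diagram} applied to $I(0)\subset\K\{z\}$ shows that any element of $I(0)$ supported on $\Delta$ is $0$, so $\bar N\cap I(0)=0$. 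Therefore $(N\cap I)/\mm(N\cap I)=0$; iterating, $N\cap I\subset\bigcap_{j}\mm^jN$, and since an element of $N$ all of whose coefficients lie in $\bigcap_j\mm^j=0$ must vanish (Krull intersection in $A$), we get $N\cap I=0$, i.e. (ii).

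The crux, and the only genuinely delicate point, is the isomorphism $\bar N\cong\K\{z\}/I(0)$: it matters that $\Delta$ is the complement of $\NN_\LL(I(0))$ rather than of $\NN_\LL(I)$, and this is precisely where the hypothesis that the $\alpha^i$ are the vertices of the \emph{evaluated} diagram, with $\inexp_\LL G_i(0)=\alpha^i$, enters. A minor nuisance is that $N\cap I$ need not be finitely generated over $A$, so Nakayama's lemma is unavailable and the argument is closed coefficientwise via the Krull intersection theorem.
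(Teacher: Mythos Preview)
The paper does not supply a proof of Theorem~\ref{thm:Hir-flat}; it is quoted from \cite[Thm.\,7.9]{BM1} without argument, so there is no in-paper proof to compare against. Your proof is correct and complete. The decomposition $A\{z\}=P\oplus N$ via the (unique, hence $A$-linear and idempotent) remainder projection, the identification $M\cong N/(N\cap I)$, and the reduction of (i)$\Rightarrow$(ii) to the bijectivity of $\bar N\to\K\{z\}/I(0)$ are all sound; the last point is precisely where the hypothesis that the $\alpha^i$ are the vertices of $\NN_\LL(I(0))$ enters (via Corollary~\ref{cor:Hir-diagram}), and you flag this correctly. The final step---deducing $N\cap I=0$ from $N\cap I=\mm(N\cap I)$ without finite generation over $A$---is handled properly: $N\cap I=\mm^j(N\cap I)\subset\mm^jN$ for all $j$, and every element of $\mm^jN$ has all its $z$-coefficients in $\mm^j$, so Krull's intersection theorem in $A$ forces each coefficient to vanish. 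One small remark: the flatness of $A\{z\}=\K\{y,z\}$ over $A=\K\{y\}$, which you invoke to conclude that the direct summand $N$ is $A$-flat, is standard but perhaps worth a word (e.g., via the local criterion for flatness, or since the fibre ring is regular of the right dimension).
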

\medskip

Let now $x=(x_1,\dots,x_n)$, $n\geq2$. Fix $k\in\{1,\dots,n-1\}$. To simplify notation, let $x_{[k]}$ denote variables $(x_1,\dots,x_k)$, and $\tx$ the remaining $(x_{k+1},\dots,x_n)$. In what follows, we will regard elements of $\K\{x\}$ either as power series in all the variables $x$ with coefficients in $\K$, written $F=\sum_{\beta\in\N^n}f_\beta x^\beta$, $f_\beta\in\K$, or as power series in variables $x_{[k]}$ with coefficients in $\K\{\tx\}$, written $F=\sum_{\alpha\in\N^k}F_\alpha(\tx)x_{[k]}^\alpha$, $F_\alpha(\tx)\in\K\{\tx\}$.

For $F\in\K\{x\}$, we will denote by $F(0)$ the series with variables $\tx$ evaluated at $0$. That is, if $F=\sum_{\alpha\in\N^k}F_\alpha(\tx)x_{[k]}^\alpha$ then $F(0)=\sum_{\alpha\in\N^k}F_\alpha(0)x_{[k]}^\alpha\in\K\{x_{[k]}\}$. Equivalently, if $F=\sum_{\beta\in\N^n}f_\beta x^\beta$ then
\[
F(0)=\sum_{\beta\in\N^k\times\{0\}^{n-k}}f_\beta x^\beta\in\K\{x_{[k]}\}
\]
(i.e., the sum is over those $\beta=(\beta_1,\dots,\beta_n)$ for which $\beta_{k+1}=\dots=\beta_n=0$).

To avoid confusion, for $F\in\K\{x\}$, we will denote its support as an element of $\K\{\tx\}\{x_{[k]}\}$ by $\supp_{\N^k}F$, and the support as an element of $\K\{x\}$ as $\supp_{\N^n}F$. That is,
\[
\supp_{\N^k}F=\{\alpha\in\N^k:F_\alpha(\tx)\neq0\}\quad\mathrm{and}\quad\supp_{\N^n}F=\{\beta\in\N^n:f_\beta\neq0\}.
\]
Note that a positive linear form $\LL(\beta)=\sum_{i=1}^n\lambda_i\beta_i$ on $\K^n$ gives rise to a positive form $\sum_{i=1}^k\lambda_i\beta_i$ on $\K^k$. By a slight abuse of notation, we will denote the latter also by $\LL$.

\begin{proposition}
\label{prop:flatness-diagram}
Let $I$ be an ideal in $\K\{x\}$, let $1\leq k<n$, and let $\tx$ denote the variables $(x_{k+1},\dots,x_n)$.
\begin{itemize} 
\item[(i)] If there exist a positive linear form $\LL$ on $\K^n$ and a set $D\subset\N^k$ such that $\NN_\LL(I)=D\times\N^{n-k}$\!, then $\K\{x\}/I$ is a flat $\K\{\tx\}$-module.
\item[(ii)] If $\K\{x\}/I$ is a flat $\K\{\tx\}$-module, then there exist $l_0\in\N$ and a set $D\subset\N^k$ such that, for all $l\geq l_0$, the diagram $\NN_\LL(I)$ with respect to the linear form $\displaystyle{\LL(\beta)=\sum_{i=1}^k\beta_i+\sum_{j=k+1}^n\!l\beta_j}$ satisfies $\NN_\LL(I)=D\times\N^{n-k}$.
\end{itemize}
\end{proposition}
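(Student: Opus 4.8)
The plan is to relate the condition ``$\NN_\LL(I) = D \times \N^{n-k}$'' to the condition ``$\K\{x\}/I$ is $\K\{\tx\}$-flat'' via Theorem~\ref{thm:Hir-flat}, applied with $A = \K\{\tx\}$ (so $y = \tx$, $m = n-k$) and $z = x_{[k]}$, $k' = k$. Under this identification, the evaluated ideal $I(0) \subset \K\{x_{[k]}\}$ appearing in Theorem~\ref{thm:Hir-flat} is exactly the one defined in the excerpt via the evaluation $F \mapsto F(0)$. The key geometric point, to be isolated as a lemma, is the following dichotomy for the $n$-dimensional diagram: for a positive linear form $\LL$ on $\K^n$ whose restriction to the last $n-k$ coordinates dominates (in a sense to be made precise), the ``vertical'' part of $\NN_\LL(I)$ --- i.e. whether it has the product form $D \times \N^{n-k}$ --- is governed by whether initial exponents of elements of $I$ always land in the subspace $\N^k \times \{0\}^{n-k}$, equivalently whether $\NN_\LL(I) = \NN_\LL(I(0)) \times \N^{n-k}$ with $\NN_\LL(I(0)) = D$.

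For part (i): Assume $\NN_\LL(I) = D \times \N^{n-k}$ for some positive form $\LL$ on $\K^n$. First I would show that this forces $\inexp_\LL F(0) = \inexp_\LL F$ for every $F \in I$ with $F(0) \neq 0$, and that $\NN_\LL(I(0)) = D$; the point is that if some $F \in I$ had $\inexp_\LL F = \beta$ with $\beta_{k+1} + \dots + \beta_n > 0$, one uses $\NN_\LL(I) = D \times \N^{n-k}$ and the structure of the diagram (Remark~\ref{rem:vertices}, Corollary~\ref{cor:Hir-diagram}) to produce an element of $I$ whose initial exponent lies in $\N^k \times \{0\}^{n-k}$ and ``sits below'' $\beta$ in a way incompatible with the product structure --- more precisely, the vertices of $D \times \N^{n-k}$ all lie in $\N^k \times \{0\}^{n-k}$, so one may choose the generators $G_i$ from Corollary~\ref{cor:Hir-diagram} with $\inexp_\LL G_i \in \N^k\times\{0\}^{n-k}$, and one checks $\inexp_\LL G_i(0) = \inexp_\LL G_i$ by a division/support argument. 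Then, for any $F \in I$, Hironaka division~\eqref{eq:remainder} of $F$ by these $G_i$ (regarded over $A = \K\{\tx\}$) yields remainder $R$ with $\supp_{\N^n} R \subset \Delta = \N^n \setminus \NN_\LL(I)$; since $R = F - \sum Q_i G_i \in I$, its initial exponent would lie in $\NN_\LL(I) \cap \Delta = \emptyset$, forcing $R = 0$. By the implication (ii)$\Rightarrow$(i) of Theorem~\ref{thm:Hir-flat}, $\K\{x\}/I$ is $\K\{\tx\}$-flat.

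For part (ii): Assume $\K\{x\}/I$ is $\K\{\tx\}$-flat. Let $\alpha^1, \dots, \alpha^t \in \N^k$ be the vertices of $\NN_\LL(I(0)) \subset \N^k$ (this diagram is independent of which admissible $\LL$ we pick, since on $\K^k$ the form is just $|\cdot|$ after the rescaling is irrelevant to the sublevel structure on the first $k$ coordinates --- to be checked), and set $D := \NN_\LL(I(0))$. Pick $G_i \in I$ with $\inexp_\LL G_i(0) = \alpha^i$. By Theorem~\ref{thm:Hir-flat}(ii)$\Rightarrow$, every $F \in I$ has zero remainder after division by the $G_i$ over $A = \K\{\tx\}$, so $I$ is generated by $G_1, \dots, G_t$ as an ideal of $\K\{\tx\}\{x_{[k]}\}$, and moreover every such $F$ satisfies $\supp_{\N^k}(F - \sum Q_i G_i) = \emptyset$, i.e. $\inexp$ of $F$ relative to the $\N^k$-grading lies in $\bigcup_i (\alpha^i + \N^k) = D$. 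Now I choose $l_0$ large enough that for the form $\LL_l(\beta) = \sum_{i\le k}\beta_i + l\sum_{j>k}\beta_j$ with $l \ge l_0$, the $\LL_l$-initial exponent of each $G_i$ (as an element of $\K\{x\}$) agrees with the $\N^k$-initial exponent $\alpha^i$ viewed in $\N^k \times \{0\}^{n-k}$ --- this is where ``$l$ large'' enters: inflating the weights on $\tx$ forces any monomial $x^\beta$ with $\beta_{k+1}+\dots+\beta_n>0$ to have $\LL_l$-value exceeding that of any fixed monomial supported in the first $k$ variables that appears in $G_i$, so the minimum is attained on $\supp_{\N^k}$. (A uniform $l_0$ works because only the finitely many $G_i$, and finitely many relevant monomials in each, need to be controlled.) Then, by the characterization of $\NN_{\LL_l}(I)$ via Corollary~\ref{cor:Hir-diagram} applied to these generators and weights, one reads off $\NN_{\LL_l}(I) = \bigcup_i (\alpha^i + \N^n) = \left(\bigcup_i(\alpha^i+\N^k)\right)\times\N^{n-k} = D \times \N^{n-k}$.

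The main obstacle is the ``$l$ large'' bookkeeping in part (ii): one must verify that a \emph{single} $l_0$ simultaneously makes the $\LL_l$-initial exponent of each generator $G_i$ drop into $\N^k \times \{0\}^{n-k}$ \emph{and} makes the remainder-partition $\{\Delta_i, \Delta\}$ computed with $\LL_l$ agree with the product of the $k$-dimensional partition with $\N^{n-k}$, so that Corollary~\ref{cor:Hir-diagram} genuinely yields the product diagram. This requires a careful, but elementary, comparison of $\LL_l$-orderings on the finitely many monomials that can occur as initial exponents, using that the $G_i$ are fixed and that flatness (via Theorem~\ref{thm:Hir-flat}) already pins down their behaviour modulo the ideal; the convergence/analyticity of the series plays no role beyond what is packaged in Hironaka division. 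A secondary technical point is checking that $D = \NN_\LL(I(0))$ is genuinely independent of the admissible choice of $\LL$ on $\K^n$, which follows since all such forms restrict to positive forms on $\K^k$ inducing the same total order on $\N^k$ up to the lexicographic tie-break, hence the same diagram.
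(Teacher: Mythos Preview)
Your plan has two genuine gaps.

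In part (i), the claim that $\inexp_\LL F(0)=\inexp_\LL F$ for \emph{every} $F\in I$ with $F(0)\neq0$ is false. Take $n=2$, $k=1$, $\LL(\beta)=|\beta|$, and $I=(x_1^2)$, so $\NN_\LL(I)=\{2,3,\dots\}\times\N$. For $F=x_1^2(x_2+x_1^{10})\in I$ one has $\inexp_\LL F=(2,1)$ but $F(0)=x_1^{12}$, so $\inexp_\LL F(0)=12\neq 2$. Your ``more precisely'' retreat to the generators $G_i$ is fine, and indeed $\inexp_\LL G_i(0)=\alpha^i$ when $\inexp_\LL G_i=(\alpha^i,0)$. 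But then your appeal to Theorem~\ref{thm:Hir-flat} is premature: that theorem requires the $\alpha^i$ to be the vertices of $\NN_\LL(I(0))$, and you have only identified them as the vertices of $D$; the equality $D=\NN_\LL(I(0))$ (Remark~\ref{rem:base}) is exactly what still needs proving. This can be patched, but the paper sidesteps the circularity entirely by using the local criterion $I\cap(\tx)\K\{x\}\subset(\tx)\!\cdot\!I$ directly: for $F\in I$ with $F(0)=0$, the division $F=\sum Q_iG_i$ from Corollary~\ref{cor:Hir-diagram} (applied in $\K\{x\}$, not over $\K\{\tx\}$) has the $\beta^i+\supp_{\N^n}Q_i$ pairwise disjoint, so evaluating at $\tx=0$ forces each $Q_i(0)=0$.

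In part (ii), your final step misuses Corollary~\ref{cor:Hir-diagram}. That corollary computes $\NN_\LL(I)$ \emph{given} its vertices; it does not say that if $G_1,\dots,G_t$ generate $I$ then $\NN_\LL(I)=\bigcup_i(\inexp_\LL G_i+\N^n)$. Knowing that the $G_i$ generate $I$ and that $\inexp_{\LL_l}G_i=(\alpha^i,0)$ only gives the inclusion $D\times\N^{n-k}\subset\NN_{\LL_l}(I)$. The reverse inclusion is the whole point, and it requires the argument the paper gives: from the zero-remainder division $F=\sum Q_iG_i$ over $\K\{\tx\}$ (Theorem~\ref{thm:Hir-flat}) one has $\alpha^i+\supp_{\N^k}Q_i\subset\Delta_i$, hence $(\alpha^i,0)+\supp_{\N^n}Q_i\subset\Delta_i\times\N^{n-k}$; since $\inexp_{\LL_l}(Q_iG_i)=(\alpha^i,0)+\inexp_{\LL_l}Q_i$ lie in pairwise disjoint regions, $\inexp_{\LL_l}F=\min_i\inexp_{\LL_l}(Q_iG_i)\in D\times\N^{n-k}$. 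Your ``main obstacle'' paragraph gestures toward this but does not supply it, and the invocation of Corollary~\ref{cor:Hir-diagram} cannot substitute for it.
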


\begin{remark}
\label{rem:base}
Note that if $\LL$ is such that $\NN_\LL(I)=D\times\N^{n-k}$ for some $D\subset\N^k$, then necessarily $D=\NN_\LL(I(0))$.
\end{remark}

\begin{proof}
For the proof of (i), we will need the following well known corollary to the classical ``local criterion for flatness'' (see, e.g., \cite[Cor.\,7.6]{BM1}): \emph{$\K\{x\}/I$ is flat as a $\K\{\tx\}$-module if and only if \,$I\cap(\tx)\K\{x\}\subset(\tx)\!\cdot\!I$}.

Suppose that $F\in I\cap(\tx)\K\{x\}$. Then, $F(0)=0$. Let $\beta^1,\dots,\beta^t\in\N^k\times\{0\}^{n-k}$ be the vertices of $\NN_\LL(I)$, and let $G_1,\dots,G_t\in I$ be such that $\inexp_\LL{G_i}=\beta^i$, $1\leq i\leq t$. By Theorem~\ref{thm:Hir-div} and Corollary~\ref{cor:Hir-diagram}, there are $Q_1,\dots,Q_t\in\K\{x\}$ such that $F=\sum_{i=1}^tQ_iG_i$ and the sets $\beta^i+\supp_{\N^n}Q_i$ are pairwise disjoint. 

Write $\beta^i=(\alpha^i,0)$, where $\alpha^i\in\N^k$, $1\leq i\leq t$.
It follows that the sets $\alpha^i+\supp_{\N^k}Q_i(0)$ in $\N^k$ are also pairwise disjoint, and hence the initial exponents $\inexp_\LL{Q_i(0)G_i(0)}=\inexp_\LL{Q_i(0)}+\alpha^i$ are pairwise distinct. On the other hand, $\sum_{i=1}^tQ_i(0)G_i(0)=F(0)=0$. This is only possible if $Q_i(0)=0$ for all $i$. In other words, $Q_i\in(\tx)\K\{x\}$. Hence $F=\sum_{i=1}^tQ_iG_i$ is in $(\tx)\!\cdot\!I$, which proves (i).
\smallskip

Suppose now that $\K\{x\}/I$ is $\K\{\tx\}$-flat. Let $\lambda(\alpha)=|\alpha|$ for $\alpha\in\N^k$, and let $\alpha^1,\dots,\alpha^t$ be the vertices of $\NN(I(0))=\NN_\lambda(I(0))$. Let $\{\Delta_i,\Delta\}$ be the partition of $\N^k$ determined by the $\alpha^i$ as in Theorem~\ref{thm:Hir-div}. Let $l_0=1+\max\{|\alpha^i|:i=1,\dots,t\}$, let $l\geq l_0$ be arbitrary, and set
\[
\LL(\beta)\coloneqq\sum_{i=1}^k\beta_i+\sum_{j=k+1}^n\!l\beta_j\,.
\]
Set $\beta^i\coloneqq(\alpha^i,0)=(\alpha^i_1,\dots,\alpha^i_k,0,\dots,0)\in\N^n$, $1\leq i\leq t$. We will show that the vertices of $\NN_\LL(I)$ are precisely $\{\beta^1,\dots,\beta^t\}$.

Let $G_1,\dots,G_t\in I$ be such that $\inexp{G_i(0)}=\alpha^i$, $1\leq i\leq t$. Write $G_i=\sum_{\alpha\in\N^k}G_{i,\alpha}x_{[k]}^\alpha$, where $G_{i,\alpha}=\sum_{\gamma\in\N^{n-k}}g_{i,\alpha,\gamma}\tx^\gamma$. For every $i$, there are at most finitely many $\alpha\in\supp_{\N^k}G_i$ with $\lambda(\alpha)<\lambda(\alpha^i)$. For each such $\alpha$, by the choice of $\alpha^i$, we have $\nu(G_{i,\alpha})\geq1$, where for a non-zero $F\in\K\{\tx\}$, $\nu(F)=\max\{r:F\in(\tx)^r\}$. Therefore, for each such $\alpha$ and for every non-zero term $g_{i,\alpha,\gamma}\tx^\gamma$ of $G_{i,\alpha}$, we have $|\gamma|\geq1$, and hence $\LL((\alpha,\gamma))\geq l_0>\LL(\beta^i)$.
It follows that, with respect to the total ordering in $\N^n$ induced by $\LL$, we have
\[
\inexp_\LL(G_i)=\beta^i,\ \ 1\leq i\leq t.
\]
Pick $F\in I$. By Theorem~\ref{thm:Hir-flat}, there are $Q_1,\dots,Q_t\in\K\{x\}$ such that $F=\sum_{i=1}^tQ_iG_i$ and $\alpha^i+\supp_{\N^k}Q_i\subset\Delta_i$, $1\leq i\leq t$. Then, $\beta^i+\supp_{\N^n}Q_i\subset\Delta_i\times\N^{n-k}$; in particular, $\beta^i+\inexp_\LL{Q_i}\in\Delta_i\times\N^{n-k}$, $1\leq i\leq t$. It thus follows that the $\inexp_\LL(Q_iG_i)=\beta^i+\inexp_\LL{Q_i}$ lie in pairwise disjoint regions, and hence are pairwise distinct. Consequently, $\inexp_\LL{F}=\min_\LL\{\inexp_\LL(Q_iG_i):i=1,\dots,t\}$ belongs to $\NN(I(0))\times\N^{n-k}$, which completes the proof.
\end{proof}
\medskip

\section{Standard bases and Becker's s-series criterion}
\label{sec:s-series}

Let again $\LL(\beta)=\sum_{j=1}^n\lambda_j\beta_j$ be a positive linear form on $\K^n$, and let $\N^n$ be given the total ordering defined by the lexicographic ordering of the $(n+1)$-tuples $(\LL(\beta),\beta_n,\dots,\beta_1)$.
For $F\in\K\{x\}$, let as before $\inexp_\LL{F}={\min}_\LL\{\beta\in\supp{F}\}$ denote the initial exponent of $F$ relative to $\LL$.

Let $I$ be an ideal in $\K\{x\}$. Following Becker \cite{Be1}, we will say that a collection $S=\{G_1,\dots,G_t\}\subset I$ forms a \emph{standard basis} of $I$ (relative to $\LL$), when for every $F\in I$ there exists $i\in\{1,\dots,t\}$ such that $\inexp_\LL{F}\in\inexp_\LL{G_i}+\N^n$.

\begin{remark}
\label{rem:st-bases}
\begin{itemize}
\item[(1)] It follows directly from definition that every standard basis $S$ of $I$ relative to $\LL$ contains representatives of all the vertices of the diagram $\NN_\LL(I)$ (that is, for every vertex $\beta^i$ of $\NN_\LL(I)$ there exists $G_i\in S$ with $\beta^i=\inexp_\LL{G_i}$). Hence, by Corollary~\ref{cor:Hir-diagram}, every standard basis of $I$ is a set of generators of $I$.
\item[(2)] Note that the term ``standard basis'' in most of modern literature refers to a collection defined by more restrictive conditions than the one above (see, e.g., \cite[Cor.\,3.9]{BM1} or \cite[Cor.\,3.19]{BM2}). In particular, our standard basis is not unique and may contain elements which do not represent vertices of the diagram.
\end{itemize}
\end{remark}

For any $F=\sum_\beta f_\beta x^\beta$ and $G=\sum_\beta g_\beta x^\beta$ in $\K\{x\}$, one defines their \emph{s-series} $S(F,G)$ with respect to $\LL$ as follows: If $\beta_F=\inexp_\LL{F}$, $\beta_G=\inexp_\LL{G}$, and $x^\gamma=\mathrm{lcm}(x^{\beta_F},x^{\beta_G})$, then
\[
S(F,G)\coloneqq g_{\beta_G}x^{\gamma-\beta_F}\!\cdot F - f_{\beta_F}x^{\gamma-\beta_G}\!\cdot G\,.
\]
Given $G_1,\dots,G_t,F\in\K\{x\}$, we say that $F$ has a \emph{standard representation} in terms of $\{G_1,\dots,G_t\}$ with respect to $\LL$, when there exist $Q_1,\dots,Q_t\in\K\{x\}$ such that
\[
F=\sum_{i=1}^tQ_iG_i\qquad\mathrm{and}\qquad \inexp_\LL{F}\leq\min\{\inexp_\LL(Q_iG_i):i=1,\dots,t\}\,.
\]
Here, we adopt a convention that $\inexp_\LL{F}<\inexp_\LL{0}$, for any $\LL$ and any non-zero $F$.
\smallskip

The following s-series criterion of Becker will be our main tool in establishing Hilbert-Samuel equisingularity.

\begin{theorem}[{\cite[Thm.\,4.1]{Be1}}]
\label{thm:Becker}
Let $S$ be a finite subset of $\K\{x\}$. Then, $S$ is a standard basis (relative to $\LL$) of the ideal it generates if and only if for any $G_1,G_2\in S$ the s-series $S(G_1,G_2)$ has a standard representation in terms of $S$.
\end{theorem}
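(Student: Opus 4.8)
The plan is to adapt Buchberger's criterion to the local, non--well-ordered term order on $\N^n$ associated with $\LL$: Hironaka's division algorithm (Theorem~\ref{thm:Hir-div}) gives the ``only if'' direction immediately, while for the ``if'' direction one repeatedly improves a given representation $F=\sum H_iG_i$ using the hypothesised standard representations of the $s$-series, the termination of this process being guaranteed by a finiteness argument that takes the place of the Noetherian induction available in the polynomial case.

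For the ``only if'' direction, assume $S=\{G_1,\dots,G_t\}$ is a standard basis of $I\coloneqq(G_1,\dots,G_t)$. For an arbitrary $F\in I$ I would apply Theorem~\ref{thm:Hir-div} with $A=\K$ and $z=x$ (so $G_i(0)=G_i$, hence $\inexp_\LL G_i(0)=\inexp_\LL G_i$), obtaining $F=\sum_{i=1}^tQ_iG_i+R$ with $\inexp_\LL G_i+\supp_{\N^n}Q_i\subset\Delta_i$ and $\supp_{\N^n}R\subset\Delta$. Since a standard basis contains representatives of all vertices of $\NN_\LL(I)$ (Remark~\ref{rem:st-bases}(1)), we have $\bigcup_i\Delta_i=\bigcup_i(\inexp_\LL G_i+\N^n)\supset\NN_\LL(I)$, and therefore $\Delta\cap\NN_\LL(I)=\emptyset$; as $R=F-\sum Q_iG_i\in I$, the defining property of a standard basis forces $R=0$. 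The $\Delta_i$ being pairwise disjoint, the exponents $\inexp_\LL(Q_iG_i)=\inexp_\LL G_i+\inexp_\LL Q_i$ are pairwise distinct, so no cancellation of the smallest one can occur and $\inexp_\LL F=\min_i\inexp_\LL(Q_iG_i)$; thus $F=\sum Q_iG_i$ is a standard representation. Applying this to $F=S(G_1,G_2)\in I$ gives the stated condition.

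For the ``if'' direction, assume every $s$-series $S(G_i,G_j)$ has a standard representation in terms of $S$. Fix $0\neq F\in I$, write $F=\sum_{i=1}^tH_iG_i$ with $H_i\in\K\{x\}$, and put $\delta\coloneqq\min_i\inexp_\LL(H_iG_i)$, so that $\delta\leq\inexp_\LL F$ always. If $\delta=\inexp_\LL F$, then for any index $i$ realising the minimum $\inexp_\LL F=\inexp_\LL G_i+\inexp_\LL H_i\in\inexp_\LL G_i+\N^n$, and we are done. Otherwise $\inexp_\LL F>\delta$, so the coefficient of $x^\delta$ in $F$ vanishes; with $J\coloneqq\{i:\inexp_\LL(H_iG_i)=\delta\}$ and $c_i$ the coefficient of $x^{\inexp_\LL H_i}$ in $H_i$, this reads $\sum_{i\in J}c_i\,(\text{coefficient of }x^{\inexp_\LL G_i}\text{ in }G_i)=0$. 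I would then invoke the standard lemma on syzygies of leading terms to rewrite $\sum_{i\in J}c_ix^{\inexp_\LL H_i}G_i$ as a $\K$-linear combination of monomial multiples $x^{\gamma_{ij}}S(G_i,G_j)$, $i,j\in J$, each with initial exponent strictly larger than $\delta$ (the $s$-series cancels its own leading terms, and $\gamma_{ij}$ is the multiplier bringing the relevant least common multiple up to $x^\delta$). Replacing every $S(G_i,G_j)$ by its standard representation in terms of $S$ — this is the only place the hypothesis is used — and combining the result with $\sum_{i\in J}(H_i-c_ix^{\inexp_\LL H_i})G_i+\sum_{i\notin J}H_iG_i$, I obtain a new representation $F=\sum_m H'_mG_m$ with $H'_m\in\K\{x\}$ and $\min_m\inexp_\LL(H'_mG_m)>\delta$.

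The step I expect to require the most care — and the one distinguishing the local case from the polynomial one — is the termination of this improvement process. The key observation is that no passage to a limit is needed: every representation produced satisfies $\min_m\inexp_\LL(H'_mG_m)\leq\inexp_\LL F$, and the set $\{\beta\in\N^n:\beta\leq\inexp_\LL F\}$ is finite, because $\beta\leq\gamma$ in our order implies $\LL(\beta)\leq\LL(\gamma)$ and the level sets of a positive linear form are finite. Hence the strictly increasing sequence $\delta^{(0)}<\delta^{(1)}<\cdots$ produced by iterating the step cannot be infinite, so after finitely many steps the step is no longer applicable, which is precisely the statement that $\inexp_\LL F=\delta^{(n)}\in\inexp_\LL G_i+\N^n$ for some $i$; thus $S$ is a standard basis of $I$. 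What remains is the routine bookkeeping in the telescoping identity for the $s$-series, and the remark that, only finitely many such manipulations being performed, all power series stay within $\K\{x\}$.
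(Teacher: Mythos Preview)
The paper does not give its own proof of this statement: Theorem~\ref{thm:Becker} is quoted from \cite[Thm.\,4.1]{Be1} and used as a black box, so there is no argument in the paper to compare yours against.

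That said, your sketch is a correct adaptation of the Buchberger argument to the local setting and is essentially Becker's original proof. The ``only if'' direction via Theorem~\ref{thm:Hir-div} is fine (with $A=\K$ one has $G_i(0)=G_i$, so the $\alpha^i$ are the genuine initial exponents; the disjointness of the $\Delta_i$ prevents cancellation of the minimal term). In the ``if'' direction your key technical point is right: for $i,j\in J$ one has $\beta^i,\beta^j\leq\delta$ componentwise, hence $\gamma^{ij}=\mathrm{lcm}$-exponent $\leq\delta$, and since the $s$-series kills its term at $\gamma^{ij}$ every summand $x^{\delta-\gamma^{ij}}S(G_i,G_j)$ has initial exponent strictly greater than $\delta$ in the total order. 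Your termination argument is also the correct one for this non-well-ordered situation: the strictly increasing $\delta$'s all lie in $\{\beta:\beta\leq\inexp_\LL F\}$, and this set is finite because $\beta\leq\gamma$ forces $\LL(\beta)\leq\LL(\gamma)$, and a positive linear form has finite sublevel sets on $\N^n$. The only step you leave implicit is the ``standard lemma on syzygies of leading terms'' (that the module of syzygies of monomials is generated by the pairwise Koszul syzygies); this is indeed routine and is exactly what Becker invokes as well.
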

\medskip

\section{Nested parametrized algebraic approximation}
\label{sec:APP}

Let $x=(x_1,\dots,x_n)$, $y=(y_1,\dots,y_m)$, and let $\K\!\left\langle{x}\right\rangle$ denote the ring of algebraic power series in $x$.
Recall that a convergent power series $F\in\K\{x\}$ is called an \emph{algebraic power series} when $F$ is algebraic over the ring of polynomials $\K[x]$.

The following nested variant of P{\l}oski's parametrized approximation theorem \cite{Plo} is due to Bilski, Parusi{\'n}ski and Rond \cite{BPR}. The result itself follows from Spivakovsky's nested approximation \cite[Thm.\,11.4]{Spi}, which in turn is a corollary of the N{\'e}ron-Popescu Desingularization \cite{Pop}.

\begin{theorem}[{\cite[Thm.\,2.1]{BPR}}]
\label{thm:BPR}
Let $f(x,y)=(f_1(x,y),\dots,f_p(x,y))\in\K\!\left\langle{x}\right\rangle\![y]^p$ and let $\bar{y}(x)=(\bar{y}_1(x),\dots,\bar{y}_m(x))\in\K\{x\}^m$ be such that
\[
f(x,\bar{y}(x))=0.
\]
Suppose that $\bar{y}_i(x)$ depends only on variables $(x_1,\dots,x_{\sigma(i)})$, where $\{i\mapsto\sigma(i)\}$ is an increasing function. Then, there exist a new set of variables $z=(z_1,\dots,z_s)$, an increasing function $\tau$, an $m$-tuple of algebraic power series $\hat{y}(x,z)\in\K\left\langle{x,z}\right\rangle^m$ such that
\[
f(x,\hat{y}(x,z))=0,
\]
and for every $i$,
\[
\hat{y}_i(x,z)\in\K\left\langle{x_1,\dots,x_{\sigma(i)},z_1,\dots,z_{\tau(i)}}\right\rangle\,,
\]
as well as convergent power series $\bar{z}_i(x)\in\K\{x\}$ vanishing at $0$ such that $\bar{z}_1(x),\dots$, $\bar{z}_{\tau(i)}(x)$ depend only on $(x_1,\dots,x_{\sigma(i)})$ and
\[
\bar{y}(x)=\hat{y}(x,\bar{z}(x))\,.
\]
\end{theorem}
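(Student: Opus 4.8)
The plan is to obtain the statement from Spivakovsky's nested approximation theorem \cite[Thm.\,11.4]{Spi} --- a nested form of the N\'eron--Popescu desingularization \cite{Pop} --- via the parametrization device going back to P\l oski \cite{Plo}. The first step is to recast the hypothesis algebraically. Put $A_j=\K\langle x_1,\dots,x_j\rangle$, so that $A_1\subset\dots\subset A_n=\K\langle x\rangle$, with the parallel tower $\K\{x_{[1]}\}\subset\dots\subset\K\{x\}$. Since $f_i\in\K\langle x\rangle[y]$ and $\bar y_i\in\K\{x_{[\sigma(i)]}\}$ with $f(x,\bar y(x))=0$, the assignment $y_i\mapsto\bar y_i$ defines a homomorphism of filtered $\K\langle x\rangle$-algebras from $B:=\K\langle x\rangle[y]/(f_1,\dots,f_p)$, with the generator $y_i$ placed on the $\sigma(i)$-th floor, into the filtered ring $\K\{x\}$.

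Next I would run the nested desingularization. By \cite[Thm.\,11.4]{Spi}, $\K\{x\}$ is a filtered colimit of finite-type $\K\langle x\rangle$-algebras that are smooth in the nested sense adapted to the towers above, so the homomorphism $B\to\K\{x\}$ factors as $B\to C\to\K\{x\}$ with $C$ one such nested-smooth algebra; write $\phi\colon C\to\K\{x\}$ for the second map.

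Then I would introduce the parameters. Nested smoothness of $C$ provides a nested smooth coordinate system: finitely many $z_1,\dots,z_s\in C$, reordered so that their assigned levels satisfy $\rho(1)\le\dots\le\rho(s)$, such that after a harmless localization the subalgebra of $C$ generated over $A_j$ by $\{z_l:\rho(l)\le j\}$ is \'etale over the polynomial ring $A_j[z_l:\rho(l)\le j]$, compatibly for all $j$. Since the henselization of the latter at the point cut out by $\phi$ embeds in $\K\langle x_{[j]},z_l:\rho(l)\le j\rangle$, Hensel's lemma lifts $\phi$ through the \'etale maps, one floor at a time, to a homomorphism $C\to\K\langle x,z\rangle$ carrying each generator of $C$ to an algebraic power series in exactly the variables permitted by its level. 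Put $\hat y_i(x,z)\in\K\langle x,z\rangle$ for the image of $y_i$ under $B\to C\to\K\langle x,z\rangle$ and $\bar z_l(x):=\phi(z_l)\in\K\{x\}$, translating each $z_l$ by a constant so that $\bar z_l(0)=0$. The relations $f=0$ holding in $C$ give $f(x,\hat y(x,z))=0$; the level bookkeeping gives $\hat y_i\in\K\langle x_1,\dots,x_{\sigma(i)},z_1,\dots,z_{\tau(i)}\rangle$ and $\bar z_1,\dots,\bar z_{\tau(i)}\in\K\{x_1,\dots,x_{\sigma(i)}\}$ with $\tau(i):=\#\{l:\rho(l)\le\sigma(i)\}$ increasing; and commutativity of the factorization yields $\bar y(x)=\hat y(x,\bar z(x))$.

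The genuine obstacle is the nested bookkeeping. Ordinary N\'eron--Popescu destroys the filtration, so one cannot merely factor $B\to\K\{x\}$ through a finite-type algebra and still keep the level-by-level dependence; the desingularization itself must be carried out compatibly with the towers $(A_j)$, which is precisely the nontrivial content of \cite[Thm.\,11.4]{Spi}. Granting that, extracting a nested smooth coordinate system and applying Hensel's lemma floor by floor is routine. A minor point to check is that the localizations making $C$ standard \'etale over $A_j[z_l:\rho(l)\le j]$ are taken at the point determined by $\phi$, so that evaluation of the $\bar z_l$ at $0$ stays defined after the constant translation; since $\phi$ already factors through the relevant local rings, this is automatic.
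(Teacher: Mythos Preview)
The paper does not prove this theorem; it is quoted from \cite[Thm.\,2.1]{BPR} and accompanied only by the remark that ``the result itself follows from Spivakovsky's nested approximation \cite[Thm.\,11.4]{Spi}, which in turn is a corollary of the N\'eron--Popescu Desingularization \cite{Pop}.'' Your proposal is precisely an expansion of that one-line remark: you set up the nested tower, invoke Spivakovsky's theorem to factor through a nested-smooth finite-type algebra, and then use the P\l oski-type parametrization (Hensel lifting through an \'etale presentation, level by level) to produce the algebraic $\hat y(x,z)$ together with the convergent $\bar z(x)$. So there is no divergence of approach to discuss --- you are supplying the argument the paper merely points to.

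As a sketch your outline is sound and identifies the genuine issue correctly: ordinary N\'eron--Popescu does not respect the filtration, and the whole weight rests on \cite[Thm.\,11.4]{Spi}. One point you might tighten if you ever write this out in full: the passage from ``$C$ is nested-smooth'' to ``$C$ admits a nested \'etale coordinate system over a polynomial ring in new variables $z_l$'' is not quite automatic and is where \cite{BPR} does some work; you gesture at it with ``after a harmless localization,'' but in a complete proof you would want to spell out why the \'etale presentations at the various levels can be chosen compatibly so that the Hensel lifts glue. That is a bookkeeping matter rather than a gap in the strategy.
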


The classical Algebraic Artin Approximation follows immediately from the above.

\begin{theorem}[{\cite[Thm.\,1.10]{Ar2}}]
\label{thm:Artin}
Let $f(x,y)\in\K\!\left\langle{x}\right\rangle\![y]^p$ and let $\bar{y}(x)\in\K\{x\}^m$ be such that
\[
f(x,\bar{y}(x))=0.
\]
Then, for any $c\in\N$, there exists an $m$-tuple of algebraic power series $\hat{y}(x)\in\K\left\langle{x}\right\rangle^m$ such that
\[
f(x,\hat{y}(x))=0,
\]
and $\hat{y}$ coincides with $\bar{y}$ up to degree $c$, that is, $\bar{y}(x)-\hat{y}(x)\in(x)^{c+1}$.
\end{theorem}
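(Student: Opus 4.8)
The plan is to obtain the classical Algebraic Artin Approximation from the nested parametrized Theorem~\ref{thm:BPR} by a specialization of the auxiliary parameters; the nested structure of Theorem~\ref{thm:BPR} plays no role here, so I would apply it with the trivial choice $\sigma(i)=n$ for all $i$, for which the hypothesis ``$\bar y_i$ depends only on $x_1,\dots,x_{\sigma(i)}$'' holds vacuously. Applying Theorem~\ref{thm:BPR} to the given $f(x,y)\in\K\langle x\rangle[y]^p$ and $\bar y(x)\in\K\{x\}^m$ produces new variables $z=(z_1,\dots,z_s)$, an $m$-tuple $\hat y(x,z)\in\K\langle x,z\rangle^m$ satisfying $f(x,\hat y(x,z))=0$ identically, and convergent power series $\bar z_1(x),\dots,\bar z_s(x)\in\K\{x\}$ vanishing at $0$ with $\bar y(x)=\hat y(x,\bar z(x))$.

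Given $c\in\N$, I would then simply truncate the parameters: let $\tilde z_i(x)\in\K[x]$ be the polynomial obtained from $\bar z_i(x)$ by discarding all monomials of degree exceeding $c$. Since $\bar z_i(0)=0$, each $\tilde z_i$ has no constant term, so $\tilde z(x)\in(x)\K\langle x\rangle^s$ and $\tilde z(x)-\bar z(x)\in(x)^{c+1}\K\{x\}^s$. Set $\hat y(x)\coloneqq\hat y(x,\tilde z(x))$. Because a composition of algebraic power series in which the inner series vanish at the origin is again algebraic, $\hat y(x)\in\K\langle x\rangle^m$; and substituting $z=\tilde z(x)$ into the identity $f(x,\hat y(x,z))=0$ gives $f(x,\hat y(x))=0$.

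It remains to verify the approximation estimate $\hat y(x)-\bar y(x)\in(x)^{c+1}$, i.e. $\hat y(x,\tilde z(x))-\hat y(x,\bar z(x))\in(x)^{c+1}$. Expanding each component of $\hat y(x,z)$ as $\sum_{\beta\in\N^s}a_\beta(x)z^\beta$ with $a_\beta\in\K\{x\}$, this difference equals $\sum_{\beta}a_\beta(x)\bigl(\tilde z(x)^\beta-\bar z(x)^\beta\bigr)$: the $\beta=0$ term vanishes, and for $|\beta|\geq 1$ a standard telescoping identity expresses $\tilde z^\beta-\bar z^\beta$ as a sum of products each carrying a factor $\tilde z_j(x)-\bar z_j(x)\in(x)^{c+1}$, so every summand lies in the ideal $(x)^{c+1}$. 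Since $(x)^{c+1}$ consists precisely of the series all of whose monomials have degree $\geq c+1$, the (convergent) sum lies in $(x)^{c+1}$ as well, and $\hat y$ has all the required properties.

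I do not expect a serious obstacle here — this is the ``follows immediately'' passage of the introduction — but the two places warranting care are (a) that substituting the vanishing-at-$0$ algebraic series $\tilde z(x)$ into $\hat y(x,z)$ keeps the result algebraic, so that $\hat y(x)$ genuinely lies in $\K\langle x\rangle^m$ rather than merely in $\K\{x\}^m$, and (b) the order estimate of the last paragraph, which relies on $\bar z(x)$ itself vanishing at $0$ so that truncation of $\bar z$ disturbs only terms of degree $>c$ and the telescoping stays inside $(x)^{c+1}$.
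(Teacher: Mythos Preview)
Your proposal is correct and follows exactly the paper's route: apply Theorem~\ref{thm:BPR} (with the nesting ignored), then substitute the $c$-jet $j^c\bar z(x)$ of the auxiliary series into $\hat y(x,z)$; the paper states this in one line, and you have simply spelled out the verifications that the result is algebraic, solves $f=0$, and agrees with $\bar y$ modulo $(x)^{c+1}$.
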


\begin{proof}
Let $\hat{y}(x,z)\in\K\left\langle{x,z}\right\rangle^m$ and the $\bar{z}_i(x)\in\K\{x\}$ be as in Theorem~\ref{thm:BPR}. Then, for a given $c\in\N$, the $m$-tuple $\hat{y}(x,j^c(\bar{z}(x))$ has the desired properties, where $j^cF$ denotes the $c$-jet of $F\in\K\{x\}$.
\end{proof}
\medskip

\section{Reduction of the maximal ideal}
\label{sec:reduction}

Let, as before, $x=(x_1,\dots,x_n)$, and for $k<n$, let $x_{[k]}=(x_1,\dots,x_k)$ and $\tx=(x_{k+1},\dots,x_n)$.
Let $\mm$ denote the maximal ideal of $\K\{x\}$.
The purpose of this section is to find a suitable reduction (in the sense of Northcott-Rees \cite{NR}) of the maximal ideal $\mm/I$ in $\K\{x\}/I$, for a given ideal $I$ in $\K\{x\}$.

The following proposition is a simple consequence of the Weierstrass Preparation Theorem (see, e.g., \cite[Ch.\,III, Prop.\,2]{Nar}).

\begin{proposition}
\label{prop:Narasimhan}
Let $I$ be a proper ideal in $\K\{x\}$. After a generic linear change of coordinates in $\K^n$, there exists $k\in\{0,\dots,n-1\}$ such that the natural homomorphism $\K\{\tx\}\to\K\{x\}/I$ is injective and makes $\K\{x\}/I$ into a finite $\K\{\tx\}$-module.
\end{proposition}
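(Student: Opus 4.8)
The plan is to argue by induction on the number $n$ of variables, eliminating one variable at each step by means of the Weierstrass Preparation and Division Theorems. If $I=0$ there is nothing to prove: take $k=0$, so that $\tx=x$ and $\K\{\tx\}\to\K\{x\}/I$ is the identity. So suppose $I\neq0$, fix a non-zero $F\in I$, and write $F=F_d+F_{d+1}+\cdots$ with $F_d\neq0$ its lowest-degree homogeneous part, $d=\mathrm{ord}\,F\geq1$. Since $F_d$ is a non-zero polynomial and $\K$ is infinite, for a linear change of coordinates in $\K^n$ lying in a non-empty Zariski-open subset of $\mathrm{GL}_n(\K)$ the coefficient of $x_1^d$ in $F_d$ becomes non-zero; in other words $F$ becomes $x_1$-regular of order $d$. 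This is the (routine) meaning of ``generic'' in the statement.

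After this change, the Weierstrass Preparation Theorem yields $F=u\cdot P$ with $u\in\K\{x\}$ a unit and $P\in\K\{x_2,\dots,x_n\}[x_1]$ a distinguished polynomial of degree $d$ in $x_1$. Since $u$ is a unit, $P=u^{-1}F\in I$. By the Weierstrass Division Theorem, $\K\{x\}/(P)$ is a free $\K\{x_2,\dots,x_n\}$-module with basis $1,x_1,\dots,x_1^{d-1}$, hence a finite $\K\{x_2,\dots,x_n\}$-module; and as $P\in I$, the ring $\K\{x\}/I$ is a quotient of $\K\{x\}/(P)$, so it too is a finite $\K\{x_2,\dots,x_n\}$-module.

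Next, let $\varphi\colon\K\{x_2,\dots,x_n\}\to\K\{x\}/I$ be the natural homomorphism and set $J\coloneqq\ker\varphi$. Then $J$ is a proper ideal of $\K\{x_2,\dots,x_n\}$ (since $\varphi(1)=1\neq0$ in $\K\{x\}/I$, as $I$ is proper), $\varphi$ induces an injection $\K\{x_2,\dots,x_n\}/J\hookrightarrow\K\{x\}/I$, and $\K\{x\}/I$ is finite over $\K\{x_2,\dots,x_n\}/J$ because it is already finite over $\K\{x_2,\dots,x_n\}$. Applying the inductive hypothesis to $J$ in the $(n-1)$-variable ring $\K\{x_2,\dots,x_n\}$ produces, after a further generic linear change in the variables $(x_2,\dots,x_n)$, an integer $j$ with $\K\{x_{j+2},\dots,x_n\}\to\K\{x_2,\dots,x_n\}/J$ injective and finite. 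Setting $k\coloneqq j+1$ (so $\tx=(x_{k+1},\dots,x_n)$) and composing the two injective finite maps, transitivity of module-finiteness gives that $\K\{\tx\}\to\K\{x\}/I$ is injective and finite. A linear change in $(x_2,\dots,x_n)$ extends to one of $\K^n$ fixing $x_1$, and such a change preserves both the $x_1$-regularity of $F$ and the finiteness of $\K\{x\}/I$ over $\K\{x_2,\dots,x_n\}$, so the successive changes assemble into a single linear change of $\K^n$ with the required properties.

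The step I expect to require the most care is not any single computation but the bookkeeping of genericity and of the index $k$. The ``generic'' condition imposed at each of the at most $n$ stages depends on the ideal produced by the earlier changes, so one must phrase it as a finite cascade of non-empty Zariski-open conditions on $\mathrm{GL}_n(\K)$ rather than as a single fixed open set (this is where $\K$ infinite enters, in both the real and complex cases). One should also note that the hypothesis ``$I$ proper'' alone only gives $k\in\{0,\dots,n\}$: the induction terminates at $k=n$ exactly when $I$ is $\mm$-primary, and the asserted range $k\leq n-1$ tacitly assumes $\dim\K\{x\}/I\geq1$; with that understood, the argument above is unchanged.
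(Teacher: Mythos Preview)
The paper does not supply its own proof of this proposition; it simply records it as ``a simple consequence of the Weierstrass Preparation Theorem'' and refers to Narasimhan. Your inductive argument via Weierstrass Preparation and Division is exactly the standard proof behind that citation, and it is correct. Your closing remark is also on point: as stated, the range $k\in\{0,\dots,n-1\}$ excludes the $\mm$-primary case $\dim\K\{x\}/I=0$, which would require $k=n$ with $\K\{\tx\}=\K$; the paper only ever invokes the proposition in the positive-dimensional setting (Lemma~\ref{lem:tangent-cone}), so the discrepancy is harmless there.
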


\begin{lemma}
\label{lem:tangent-cone}
Let $I$ be an ideal in $\K\{x\}$ with $\dim\K\{x\}/I=n-k$. Then, after a generic linear change of coordinates in $\K^n$, there is, for every $j=1,\dots,k$, a distinguished polynomial $P_j\in\K\{\tx\}[t]$ of degree $d_j$ such that $P_j(x_j,\tx)\in I\cap\mm^{d_j}$, where $\tx=(x_{k+1},\dots,x_n)$.
\end{lemma}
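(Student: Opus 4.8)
\textbf{Proof proposal for Lemma~\ref{lem:tangent-cone}.}

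The plan is to combine Proposition~\ref{prop:Narasimhan} with a classical fact about finite module maps and Weierstrass preparation, applied one coordinate at a time. First I would invoke Proposition~\ref{prop:Narasimhan}: after a generic linear change of coordinates, the natural map $\K\{\tx\}\to\K\{x\}/I$ is injective and finite, where $\tx=(x_{k+1},\dots,x_n)$ and $n-k=\dim\K\{x\}/I$; here one uses that a generic choice of coordinates realizes the Noether normalization with exactly $n-k$ ``free'' variables, since the Krull dimension of $\K\{x\}/I$ is $n-k$. In particular, for each $j\in\{1,\dots,k\}$, the image of $x_j$ in $\K\{x\}/I$ is integral over $\K\{\tx\}$, so there is a monic polynomial relation
\[
x_j^{d_j}+a_1(\tx)x_j^{d_j-1}+\dots+a_{d_j}(\tx)\equiv0 \pmod{I},
\]
with $a_i(\tx)\in\K\{\tx\}$. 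Equivalently, there is a monic $\widetilde{P}_j\in\K\{\tx\}[t]$ of some degree $d_j'$ with $\widetilde{P}_j(x_j,\tx)\in I$.

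The second step is to upgrade such a monic relation to a \emph{distinguished} polynomial lying in the appropriate power of the maximal ideal. The issue is that $\widetilde{P}_j(x_j,\tx)$ need not lie in $\mm^{d_j'}$, because the coefficients $a_i(\tx)$ may have nonzero constant term (so the image of $x_j$ need not be in the integral closure of the maximal ideal of $\K\{\tx\}$ — but here we only need information about $I$, not integral closure). To fix this, consider the element $\overline{x_j}\in\K\{x\}/I$. Since it is integral over $\K\{\tx\}$ and $\K\{x\}/I$ is a local ring with maximal ideal lying over the maximal ideal of $\K\{\tx\}$, the image of $\overline{x_j}$ in the residue field is $0$; hence $\overline{x_j}$ is a root of a monic polynomial over $\K\{\tx\}$ whose reduction mod the maximal ideal of $\K\{\tx\}$ is $t^{d_j}$ for some $d_j$. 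Concretely: take the minimal monic relation, reduce mod $\mm_{\K\{\tx\}}$, obtaining a monic polynomial in $t$ over $\K$ with $\overline{x_j}(0)=0$ as a root, so it is divisible by $t$; replacing the relation by the power of $t$ that actually divides it, one extracts a monic $P_j\in\K\{\tx\}[t]$ of degree $d_j$ with all coefficients of $t^{d_j-i}$ lying in the maximal ideal of $\K\{\tx\}$, and with $P_j(x_j,\tx)\in I$. Such a $P_j$ is precisely a distinguished polynomial of degree $d_j$, and since each monomial $a_i(\tx)x_j^{d_j-i}$ with $a_i\in\mm_{\K\{\tx\}}$ has total degree $\geq1+(d_j-i)\geq \dots$ — more carefully, $a_i(\tx)\in(\tx)^{\geq1}$ so $a_i(\tx)x_j^{d_j-i}\in\mm^{d_j-i+1}$, which is not quite $\mm^{d_j}$ for $i\geq2$.

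To get the cleaner statement $P_j(x_j,\tx)\in\mm^{d_j}$ I would instead argue at the level of the tangent cone / associated graded ring, which also explains the lemma's name. Pass to the leading forms: $\mathrm{gr}_\mm(\K\{x\}/I)=\K[x]/\mathrm{in}(I)$, where $\mathrm{in}(I)$ is the ideal of initial forms. Because the coordinate change was generic and $\dim\K\{x\}/I=n-k$, the variables $x_{k+1},\dots,x_n$ map to a homogeneous system of parameters of the tangent cone, and for each $j\le k$ the image of $x_j$ in $\K[x]/\mathrm{in}(I)$ is integral over $\K[\tx]$; since everything is graded, the monic integral relation can be taken \emph{homogeneous}, giving a homogeneous polynomial $\varphi_j$ of degree $d_j$ in $\mathrm{in}(I)$ of the form $\varphi_j=x_j^{d_j}+(\text{lower order in }x_j,\text{ with coefficients homogeneous in }\tx)$. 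Lifting $\varphi_j$ to $\K\{x\}$ (replacing each monomial $x_j^{d_j-i}\tx^{\gamma}$ with $|\gamma|=i$ by itself — note every monomial of $\varphi_j$ has total degree exactly $d_j$) produces $P_j(x_j,\tx)$ with leading form $\varphi_j\in\mathrm{in}(I)$; by definition of the initial-form ideal there exists an element of $I$ with this leading form, i.e. of the form $P_j(x_j,\tx)+(\text{higher order})$, and absorbing the higher-order tail (which can be chosen to involve only $\tx$ after a further Weierstrass division by the monic $P_j$ in the variable $x_j$) yields a distinguished polynomial $P_j\in\K\{\tx\}[t]$ of degree $d_j$ with $P_j(x_j,\tx)\in I\cap\mm^{d_j}$, as desired.

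The main obstacle I anticipate is the genericity bookkeeping: one must check that a \emph{single} generic linear change of coordinates works simultaneously for Proposition~\ref{prop:Narasimhan} and for making each $x_j$ ($j\le k$) integral over $\K[\tx]$ in the tangent cone with the leading coefficient normalized — but this is standard, since the relevant conditions (the projection $\K^n\to\K^{n-k}$ being finite on the tangent cone, and each coordinate projection being ``in general position'') each hold on a dense Zariski-open set of the Grassmannian of coordinate flags, so their intersection is still dense. The only other point requiring care is the passage from a monic integral relation to a \emph{distinguished} polynomial in $\mm^{d_j}$ rather than merely in $\mm$; the tangent-cone argument above handles this cleanly by forcing homogeneity of the integral dependence relation.
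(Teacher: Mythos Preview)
Your overall strategy---using the associated graded ring (tangent cone) to force the integral dependence relation to be homogeneous---is exactly the idea behind the paper's proof, and you correctly isolate the real difficulty: passing from a distinguished polynomial in $I$ to one in $I\cap\mm^{d_j}$. However, there is a genuine gap in your final lifting step.

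You produce a homogeneous $\varphi_j\in\mathrm{in}(I)\cap\K[x_j,\tx]$, monic in $x_j$ of degree $d_j$. By definition of the initial ideal there exists $F\in I$ with $LF(F)=\varphi_j$, so $F\in I\cap\mm^{d_j}$. But such an $F$ lives in $\K\{x_1,\dots,x_n\}$ and its higher-order tail may involve the variables $x_1,\dots,x_{j-1},x_{j+1},\dots,x_k$. Weierstrass preparation in $x_j$ then yields a distinguished polynomial with coefficients in $\K\{x_1,\dots,\widehat{x_j},\dots,x_n\}$, \emph{not} in $\K\{\tx\}$. The lemma---and its use in Corollary~\ref{cor:reduction}, where one needs $x_j^{d_j}\in I+(\tx)\cdot\mm^{d_j-1}$---genuinely requires coefficients in $\K\{\tx\}$ only, so this is not merely cosmetic. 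Your sentence ``absorbing the higher-order tail (which can be chosen to involve only $\tx$ after a further Weierstrass division \dots)'' is precisely the unjustified step.

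The paper avoids this by reversing your order of operations. It first takes the distinguished polynomial $P_j\in\K\{\tx\}[x_j]\cap I$ coming from integral dependence of $x_j$ over $\K\{\tx\}$ (your $\widetilde P_j$), which automatically has coefficients in $\K\{\tx\}$, and \emph{then} uses the tangent cone to show that $x_j^{d_j}$ appears in $LF(P_j)$, equivalently $P_j\in\mm^{d_j}$. Concretely: after a generic change the projection $C(X,0)\to\C^{n-k}$ is proper and surjective; since $C(X,0)\subset LF(P_j)^{-1}(0)$ and every term of $P_j$ other than $x_j^{d_j}$ carries a factor from $\tx$, an induction on $n-k$ shows that if $x_j^{d_j}$ were absent from $LF(P_j)$ the projection could not be surjective. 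Your graded Noether normalization is the algebraic counterpart of arranging this projection, but to close the argument you should apply it to analyze $LF(P_j)$ for the already-constructed $P_j\in\K\{\tx\}[x_j]$, rather than attempting to lift $\varphi_j$.
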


\begin{proof}
By Proposition~\ref{prop:Narasimhan}, after a generic linear change of coordinates in $\K^n$, there exists $k'\leq{n-1}$ and an injective homomorphism $\K\{\tx\}\to\K\{x\}/I$ such that $\K\{x\}/I$ is a finite $\K\{\tx\}$-module, where $\tx=(x_{k'+1},\dots,x_n)$. Since for a finite injective homomorphism of Noetherian rings $A\to R$ we have $\dim{R}=\dim{A}$, it follows that $k'=k$.

Suppose first that $\K=\C$. Let $(X,0)$ be the germ of an analytic space at $0$ in $\C^n$ defined by $\OO_{X,0}=\C\{x\}/I$. Further, let $C(X,0)$ denote the tangent cone to $(X,0)$, in the sense of Whitney \cite{Wh}. Then, $\dim_0{C(X,0)}=\dim_0X=n-k$, and after another generic linear change of coordinates if needed, we may assume that $C(X,0)$ has a proper and surjective projection onto (an open neighbourhood of $0$ in) $\C^{n-k}$ spanned by the variables $\tx$. Finiteness of $\C\{x\}/I$ as a $\C\{\tx\}$-module implies that the images of $x_1,\dots,x_k$ in $\C\{x\}/I$ are integral over $\C\{\tx\}$. Hence, for every $j=1,\dots,k$, there exist $d_j\in\Z_+$ and a distinguished polynomial $P_j\in\C\{\tx\}[t]$ of degree $d_j$, such that $P_j(x_j,\tx)\in I$. Write $\displaystyle{P_j(x_j,\tx)=x_j^{d_j}+\sum_{r=1}^{d_j}a^j_r(\tx)x_j^{d_j-r}}$, $j=1,\dots,k$.

Fix $j\in\{1,\dots,k\}$. Let $LF(P_j)$ denote the leading form of $P_j$ (i.e., the homogeneous polynomial consisting of the terms of $P_j$ of lowest degree). By Whitney's theory of tangent cones~\cite{Wh}, $C(X,0)$ is the set of common zeroes of leading forms $LF(F)$ for all $F$ vanishing on $(X,0)$. In particular, $C(X,0)\subset LF(P_j)^{-1}(0)$.

To prove the lemma, it now suffices to show that $x_j^{d_j}$ is among the terms of $LF(P_j)$.
We argue by induction on $n-k$, the number of variables $\tx$.
If $n-k=1$, then $\tx$ is a single variable $x_n$. If $x_j^{d_j}$ were not among the terms of $LF(P_j)$ then $x_n$ would divide $LF(P_j)$, and so the image of $C(X,0)$ under the projection to $\C^{n-k}$ would be $\{0\}=LF(P_j)^{-1}(0)\cap\{x_{[k]}=0\}$, contradicting the surjectivity.

Suppose then that $n-k\geq2$, and consider $\widetilde{P}_j:=P_j(x_j,x_{k+1},\dots,x_{n-1},0)$. Then, $\widetilde{P}_j$ vanishes on $\widetilde{X}:=X\cap\{x_n=0\}$, and hence $LF(\widetilde{P}_j)$ vanishes on $C(\widetilde{X},0)$. Since $C(\widetilde{X},0)$ has a surjective projection onto (an open neighbourhood of $0$ in) $\C^{n-k-1}$, then, by induction, $x_j^{d_j}$ is among the terms of $LF(\widetilde{P}_j)$. If $x_j^{d_j}$ were not among the terms of $LF(P_j)$, then we would have $\deg{LF(P_j)}<\deg{LF(\widetilde{P}_j)}=d_j$. Hence, $x_n$ would divide $LF(P_j)$, and so the image of $C(X,0)$ under projection to $\C^{n-k}$ would be contained in the hypersurface $\{x_n=0\}$. This contradiction completes the proof in case $\K=\C$.

If $\K=\R$, the result follows by applying the above argument to the complexification $X^\C$ of $X$. Note that the linear changes of coordinates required at the beginning may be taken with integral coefficients, and hence the distinguished polynomials $P_j$ will have real coefficients.
\end{proof}

Let $P_j(x_j,\tx)=x_j^{d_j}+\sum_{r=1}^{d_j}a^j_r(\tx)x_j^{d_j-r}$, $j=1,\dots,k$, be as above. Set
\[
d:=\sum_{j=1}^k(d_j-1).
\]

\begin{corollary}
\label{cor:reduction}
We have $(x_{[k]})^{d+1}\subset I+(\tx)\!\cdot\!\mm^d$, as ideals in $\K\{x\}$.
\end{corollary}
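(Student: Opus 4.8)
The plan is to reduce everything to monomials and then combine two elementary ingredients: the explicit relations $P_j(x_j,\tx)\in I$ coming from Lemma~\ref{lem:tangent-cone}, and a pigeonhole argument on exponents. Since $(x_{[k]})^{d+1}$ is generated (as an ideal of $\K\{x\}$) by the finitely many monomials $x_{[k]}^\alpha=x_1^{\alpha_1}\cdots x_k^{\alpha_k}$ with $|\alpha|=\alpha_1+\dots+\alpha_k=d+1$, it suffices to show that each such monomial lies in $I+(\tx)\cdot\mm^d$.

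First I would extract the consequence of the condition $P_j\in\mm^{d_j}$ on the coefficients $a^j_r$. Writing $P_j(x_j,\tx)=x_j^{d_j}+\sum_{r=1}^{d_j}a^j_r(\tx)x_j^{d_j-r}$ and grouping the expansion by powers of $x_j$ (which is one of $x_1,\dots,x_k$, hence disjoint from the $\tx$-variables, so no cancellation occurs), the monomials of $a^j_r(\tx)x_j^{d_j-r}$ have degree $\nu+(d_j-r)$ where $\nu$ is the degree of the corresponding monomial of $a^j_r$. Thus $P_j\in\mm^{d_j}$ forces $\nu\ge r$ for every such monomial, i.e. $a^j_r\in(\tx)^r$ for $r=1,\dots,d_j$. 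In particular, modulo $I$ we have the congruence
\[
x_j^{d_j}\;\equiv\;-\sum_{r=1}^{d_j}a^j_r(\tx)\,x_j^{d_j-r},\qquad a^j_r\in(\tx)^r.
\]
Next comes the combinatorial step. Because $d=\sum_{j=1}^k(d_j-1)$, we have $d+1=\sum_{j=1}^k(d_j-1)+1$; hence if $\alpha_j\le d_j-1$ for every $j$, then $|\alpha|\le\sum_{j=1}^k(d_j-1)=d<d+1$. So for any $\alpha$ with $|\alpha|=d+1$ there is an index $j$ with $\alpha_j\ge d_j$. Fix such a $j$ and write $x_{[k]}^\alpha=m\cdot x_j^{d_j}$, where $m=x_{[k]}^\alpha/x_j^{d_j}$ is a monomial in $x_{[k]}$ of degree $|\alpha|-d_j=d+1-d_j\ge 0$ (note $d+1\ge d_j$), so $m\in\mm^{d+1-d_j}$.

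Substituting the congruence gives $x_{[k]}^\alpha=m\,P_j(x_j,\tx)-\sum_{r=1}^{d_j}m\,a^j_r(\tx)\,x_j^{d_j-r}$, where $m\,P_j\in I$. For the $r$-th summand, from $m\in\mm^{d+1-d_j}$, $a^j_r\in(\tx)^r$ and $x_j^{d_j-r}\in\mm^{d_j-r}$ one gets
\[
m\,a^j_r(\tx)\,x_j^{d_j-r}\in(\tx)^r\cdot\mm^{(d+1-d_j)+(d_j-r)}=(\tx)^r\cdot\mm^{d+1-r}\subset(\tx)\cdot\mm^{(r-1)+(d+1-r)}=(\tx)\cdot\mm^d,
\]
using $(\tx)^r=(\tx)\cdot(\tx)^{r-1}\subset(\tx)\cdot\mm^{r-1}$, and all the exponents here are nonnegative precisely because $1\le r\le d_j\le d+1$. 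Hence $x_{[k]}^\alpha\in I+(\tx)\cdot\mm^d$, and since these monomials generate $(x_{[k]})^{d+1}$, the corollary follows. I do not expect a genuine obstacle; the only point needing care is that the degree bookkeeping in the last display stays within $\mm^d$ and that the pigeonhole is applied with the exact value $d+1=\sum_{j=1}^k(d_j-1)+1$, which is what makes the conclusion ``$\alpha_j\ge d_j$ for some $j$'' available.
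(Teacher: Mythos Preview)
Your argument is correct and follows essentially the same route as the paper's proof: reduce to monomials, use pigeonhole to find an index $j$ with $\alpha_j\ge d_j$, and substitute via the relation $x_j^{d_j}=P_j-\sum_r a^j_r(\tx)x_j^{d_j-r}$ coming from Lemma~\ref{lem:tangent-cone}. The only cosmetic difference is that the paper packages the degree bookkeeping by noting $\sum_r a^j_r(\tx)x_j^{d_j-r}\in(\tx)\cap\mm^{d_j}=(\tx)\cdot\mm^{d_j-1}$ in one step, whereas you first prove the sharper fact $a^j_r\in(\tx)^r$ and then collapse $(\tx)^r\cdot\mm^{d+1-r}\subset(\tx)\cdot\mm^d$; both reach the same conclusion.
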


\begin{proof}
Indeed, for any monomial $x_1^{\beta_1}\!\cdots x_k^{\beta_k}\in(x_{[k]})^{d+1}$, there exists $j$ such that $\beta_j\geq d_j$.
By Lemma~\ref{lem:tangent-cone}, $x_j^{d_j}=P_j(x_j,\tx)-\sum_{r=1}^{d_j}a^j_r(\tx)x_j^{d_j-r}$ is an element of $I+(\tx)\cap\mm^{d_j}=I+(\tx)\cdot\mm^{d_j-1}$. Consequently, $x_1^{\beta_1}\dots x_k^{\beta_k}\in I+(\tx)\cdot\mm^N$, where $N=\beta_1+\dots+(\beta_j-1)+\dots+\beta_k\geq d$.
\end{proof}

\begin{remark}
\label{rem:reduction}
The above corollary implies that $I+(\tx)/I$ is a \emph{reduction} (with exponent $d$) of the maximal ideal $\mm/I$ in $\K\{x\}/I$, in the sense of Northcott--Rees~\cite{NR}. Indeed, one trivially has $I+(\tx)\subset I+\mm$, and by above, $I+\mm^{d+1}\,\subset\,I+(\tx)\cdot\mm^d$. It follows that $I+\mm^{d+1}\,=\,I+(\tx)\cdot\mm^d$, hence by induction
\begin{equation}
\label{eq:red}
I+\mm^{d+m}\,=\,I+(\tx)^m\mm^d,\quad\mathrm{for\ any\ }m\geq1.
\end{equation}
\end{remark}
\medskip

\section{Approximation of ideals and diagrams}
\label{sec:approx}

Let, as before, $\LL(\beta)=\sum_{j=1}^n\lambda_j\beta_j$ be a positive linear form on $\K^n$.
For such $\LL$ and $\mu\in\N$, define $\nn_{\LL,\mu}$ to be the ideal in $\K\{x\}$ generated by all the monomials $x^\beta=x_1^{\beta_1}\dots x_n^{\beta_n}$ with $\LL(\beta)\geq\mu$. (Note that, by positivity of the linear form $\LL$, the ideals $\nn_{\LL,\mu}$ are $\mm$-primary for every $\mu$. Moreover, for $\LL(\beta)=|\beta|$ we have $\nn_{\LL,\mu}=\mm^\mu$.)
For a natural number $\mu\in\N$ and a power series $F\in\K\{x\}$, the \emph{$\mu$-jet of $F$ with respect to $\LL$}, denoted $j_\LL^\mu(F)$, is the image of $F$ under the canonical epimorphism $\K\{x\}\to\K\{x\}/\nn_{\LL,\mu+1}$. We will write $j^\mu(F)$ for $\mu$-jets with respect to $\LL(\beta)=|\beta|=\beta_1+\dots+\beta_n$.

\begin{remark}
\label{rem:jet-exp}
Given a power series $F\in\K\{x\}$, suppose that $\mu\geq\LL(\inexp_\LL(F))$.
Then, $\inexp_\LL(F)=\inexp_\LL(G)$ for every $G\in\K\{x\}$ with $j_\LL^\mu(G)=j_\LL^\mu(F)$.
\end{remark}

The following lemma expresses the Hilbert-Samuel function of an ideal in terms of its diagram of initial exponents.

\begin{lemma}
\label{lem:HS-diagram-complement}
Let $\lambda_1,\dots,\lambda_n>0$ be arbitrary, and let $\LL(\beta)=\sum_{j=1}^n\lambda_j\beta_j$. Then, for any ideal $I$ in $\K\{x\}$ and for every $\eta\geq1$,
\[
\#\{\beta\in\N^n\setminus\NN_\LL(I):\LL(\beta)\leq\eta\}\ =\ \dim_\K\frac{\K\{x\}}{I+\nn_{\LL,\eta+1}}\,,
\]
where the dimension on the right side is in the sense of $\K$-vector spaces.
In particular, the Hilbert-Samuel function $H_I$ of \,$\K\{x\}/I$ satisfies
\[
H_I(\eta)=\#\{\beta\in\N^n\setminus\NN(I):|\beta|\leq\eta\},\quad\mathrm{for\ all\ }\eta\geq1\,.
\]
\end{lemma}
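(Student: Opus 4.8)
The plan is to establish the first displayed equality, since the ``in particular'' statement follows by specializing to $\LL(\beta)=|\beta|$, in which case $\nn_{\LL,\eta+1}=\mm^{\eta+1}$ and the left side becomes $\#\{\beta\in\N^n\setminus\NN(I):|\beta|\leq\eta\}$, while the right side is exactly $\dim_\K \K\{x\}/(I+\mm^{\eta+1}) = \dim_\K \OO_{X,0}/\mm^{\eta+1} = H_I(\eta)$ by definition of the Hilbert--Samuel function.

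To prove the first equality, the natural approach is to produce an explicit $\K$-vector space basis of $\K\{x\}/(I+\nn_{\LL,\eta+1})$ indexed by the set $\{\beta\in\N^n\setminus\NN_\LL(I):\LL(\beta)\leq\eta\}$. First I would invoke Hironaka's division algorithm (Theorem~\ref{thm:Hir-div}) together with Corollary~\ref{cor:Hir-diagram}: choosing $G_1,\dots,G_t\in I$ representing the vertices of $\NN_\LL(I)$, every $F\in\K\{x\}$ has a unique remainder $R_F$ with $\supp_{\N^n} R_F\subset\Delta=\N^n\setminus\NN_\LL(I)$, and $F\in I$ iff $R_F=0$. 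This already shows that the monomials $\{x^\beta:\beta\in\N^n\setminus\NN_\LL(I)\}$ map to a $\K$-basis of $\K\{x\}/I$ (this is essentially the content of Corollary~\ref{cor:Hir-diagram}). The remaining task is to pass to the further quotient by $\nn_{\LL,\eta+1}$ and see that exactly the basis elements with $\LL(\beta)>\eta$ are killed.

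The key point here is that $\nn_{\LL,\eta+1}$ is the monomial ideal generated by $\{x^\beta:\LL(\beta)\geq\eta+1\}$, equivalently $\{x^\beta:\LL(\beta)>\eta\}$ since $\LL$ takes values in a discrete set on $\N^n$ only up to the integrality issue --- but actually one should be careful: $\LL(\beta)$ need not be an integer, so $\nn_{\LL,\eta+1}$ is generated by $x^\beta$ with $\LL(\beta)\geq\eta+1$, which for non-integer-valued $\LL$ may be a proper subset of those with $\LL(\beta)>\eta$. I would therefore either restrict attention to the statement as literally quantified (the left side counts $\LL(\beta)\leq\eta$, the quotient is by $\nn_{\LL,\eta+1}$), and verify that $I+\nn_{\LL,\eta+1}$ contains precisely those remainders supported on $\{\beta\in\N^n\setminus\NN_\LL(I):\LL(\beta)\geq\eta+1\}$ together with $I$ --- so that the quotient $\K\{x\}/(I+\nn_{\LL,\eta+1})$ has basis the images of $x^\beta$ for $\beta\in\N^n\setminus\NN_\LL(I)$ with $\LL(\beta)<\eta+1$, i.e. $\LL(\beta)\leq\eta$. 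Concretely: given $F\in\K\{x\}$, decompose $R_F = R_F' + R_F''$ according to $\LL(\beta)\leq\eta$ versus $\LL(\beta)\geq\eta+1$; then $R_F''\in\nn_{\LL,\eta+1}$, so $F\equiv R_F' \pmod{I+\nn_{\LL,\eta+1}}$, giving spanning. For linear independence, suppose a $\K$-linear combination $\sum c_\beta x^\beta$ over $\{\beta\notin\NN_\LL(I),\LL(\beta)\leq\eta\}$ lies in $I+\nn_{\LL,\eta+1}$; write it as $G + H$ with $G\in I$, $H\in\nn_{\LL,\eta+1}$, apply Hironaka division to both sides, use uniqueness of remainders and the fact that the remainder of $H$ is supported where $\LL(\beta)\geq\eta+1$ (monomials in $\nn_{\LL,\eta+1}$ stay in $\nn_{\LL,\eta+1}$ after division since the quotients $Q_i$ only shift supports upward in the $\LL$-ordering, hence upward in $\LL$-value) and the remainder of $G$ is zero, to conclude all $c_\beta=0$.

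The main obstacle I anticipate is precisely this bookkeeping about how $\nn_{\LL,\eta+1}$ interacts with Hironaka division: one must check that dividing an element of $\nn_{\LL,\eta+1}$ by the $G_i$ produces a remainder still lying in $\nn_{\LL,\eta+1}$ (equivalently, supported on $\beta$ with $\LL(\beta)\geq\eta+1$). This follows because in the division $F=\sum Q_i G_i + R$, every term of $R$ has $\LL$-value at least that of $\inexp_\LL(F)$ is false in general --- rather, one uses that $\nn_{\LL,\eta+1}$ is a monomial ideal and closed under the operations in the division algorithm, or more cleanly, that $\nn_{\LL,\eta+1}$ is itself an ideal so $F\in\nn_{\LL,\eta+1}\Rightarrow R_F = F - \sum Q_i G_i$ need not be in it unless we know $\sum Q_iG_i\in\nn_{\LL,\eta+1}$; the clean route is instead to work modulo $\nn_{\LL,\eta+1}$ from the start, noting $\K\{x\}/\nn_{\LL,\eta+1}$ is finite-dimensional with monomial basis $\{x^\beta:\LL(\beta)\leq\eta\}$, and that the image of $I$ in it is spanned by images of the $G_i$ and their multiples, whose $\LL$-leading terms generate $\NN_\LL(I)$ truncated at level $\eta$; a dimension count then finishes. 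Everything else is a routine consequence of the uniqueness in Theorem~\ref{thm:Hir-div}.
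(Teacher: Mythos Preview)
Your spanning argument—Hironaka-divide $F$, then split the remainder $R_F=R_F'+R_F''$ according to the threshold $\LL(\beta)\leq\eta$—is exactly what the paper does for the inequality $\dim_\K\leq\#$.

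Where you diverge is linear independence. You try to divide $G+H$ (with $G\in I$, $H\in\nn_{\LL,\eta+1}$) and then need the remainder of $H$ to remain in $\nn_{\LL,\eta+1}$; you correctly flag this as the obstacle and waver between fixes without landing one. For the record, that claim \emph{is} true: each reduction step in the division algorithm replaces a term $cx^\beta$ by terms $cx^{\beta-\alpha^i+\delta}$ with $\delta>\alpha^i$ in the $\LL$-ordering, hence $\LL(\delta)\geq\LL(\alpha^i)$ and the $\LL$-value never drops, so $\nn_{\LL,\eta+1}$ is stable under division. But the paper sidesteps this entirely with a one-line argument via initial exponents: if $F\neq0$ has $\supp F\subset\{\beta\notin\NN_\LL(I):\LL(\beta)\leq\eta\}$ and $G\in\nn_{\LL,\eta+1}$, then $j_\LL^\eta(F+G)=j_\LL^\eta(F)$, so by Remark~\ref{rem:jet-exp} we have $\inexp_\LL(F+G)=\inexp_\LL(F)\notin\NN_\LL(I)$, whence $F+G\notin I$ and $F\notin I+\nn_{\LL,\eta+1}$. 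No division is needed for this direction, and this is the simplification you were missing.

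Your side remark about $\{\LL(\beta)>\eta\}$ versus $\{\LL(\beta)\geq\eta+1\}$ for non-integer-valued $\LL$ is legitimate; the paper's spanning argument tacitly makes the same split $R=R_1+R_2$ and so carries the same implicit assumption. In every application in the paper $\LL$ is integer-valued on $\N^n$, so the issue does not arise there.
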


\begin{proof}
Fix $\eta\geq1$.
Suppose that $F\in\K\{x\}$ satisfies $\supp(F)\subset\{\beta\in\N^n\setminus\NN_\LL(I):\LL(\beta)\leq\eta\}$ and pick $G\in\nn_{\LL,\eta+1}$. Then, $\inexp_\LL(F+G)=\inexp_\LL(F)$, by Remark~\ref{rem:jet-exp}, and hence $\inexp_\LL(F+G)\notin\NN_\LL(I)$. It follows that $F+G\notin I$, and so $F\notin I+\nn_{\LL,\eta+1}$. This proves that the set of monomials $\{x^\beta: \beta\in\N^n\setminus\NN_\LL(I),\LL(\beta)\leq\eta\}$ is linearly independent in $\K\{x\}/(I+\nn_{\LL,\eta+1})$, whence
\[
\dim_\K\frac{\K\{x\}}{I+\nn_{\LL,\eta+1}}\ \geq\ \#\{\beta\in\N^n\setminus\NN_\LL(I):\LL(\beta)\leq\eta\}\,.
\]
Conversely, suppose that $F\notin I+\nn_{\LL,\eta+1}$. Let $G_1,\dots,G_t\in I$ be representatives of the vertices of $\NN_\LL(I)$ and let $F=\sum_{i=1}^tQ_iG_i+R$ be the unique Hironaka division of $F$ by the $G_i$ in $\K\{x\}$, relative to $\LL$.
Now, if $R\in\nn_{\LL,\eta+1}$ then $F\in I+\nn_{\LL,\eta+1}$; a contradiction. Therefore, we have $R=R_1+R_2$, with $R_2\in\nn_{\LL,\eta+1}$, $R_1\neq0$, and $\supp(R_1)\subset\{\beta\in\N^n\setminus\NN_\LL(I):\LL(\beta)\leq\eta\}$ (cf. Theorem~\ref{thm:Hir-div}). Then, $F-R_1=\sum_{i=1}^tQ_iG_i+R_2$ is in $I+\nn_{\LL,\eta+1}$, which shows that $F$ and $R_1$ represent the same element of $\K\{x\}/(I+\nn_{\LL,\eta+1})$. Thus,
\[
\dim_\K\frac{\K\{x\}}{I+\nn_{\LL,\eta+1}}\ \leq\ \#\{\beta\in\N^n\setminus\NN_\LL(I):\LL(\beta)\leq\eta\}\,.
\]
The last claim of the lemma now follows from the definition of the Hilbert-Samuel function as $H_I(\eta)=\dim_\K\K\{x\}/(I+\mm^{\eta+1})$.
\end{proof}
\medskip

\begin{definition}
\label{def:mu-nbhd}
For an ideal $I=(F_1,\dots,F_s)\cdot\K\{x\}$, a positive linear form $\LL$ and $\mu\geq1$, we define the family of ideals $U_\LL^\mu(I)$ (or, more precisely, $U_\LL^\mu(F_1,\dots,F_s)$) as
\[
U_\LL^\mu(I)=\{(G_1,\dots,G_s)\cdot\K\{x\}:j_\LL^\mu(G_i)=j_\LL^\mu(F_i), 1\leq i\leq s\}.
\]
We will write simply $U^\mu(I)$ for $U_\LL^\mu(I)$, when $\LL(\beta)=|\beta|$.
\end{definition}

The following lemma shows that the reduction of the maximal ideal in $\K\{x\}/I$ is preserved by its sufficiently close Taylor approximations.

\begin{lemma}
\label{lem:red-approx}
Let $I=(F_1,\dots,F_s)$ be an ideal in $\K\{x\}$ with $\dim\K\{x\}/I=n-k$. Then, after a generic linear change of coordinates in $\K^n$, there exists $\mu_0$ such that, for every $\mu\geq\mu_0$ and $I_\mu\in U^\mu(I)$, we have
\begin{equation}
\label{eq:red-approx}
I_\mu+\mm^{d+m}\,=\,I_\mu+(\tx)^m\mm^d,\quad\mathrm{for\ any\ }m\geq1,
\end{equation}
where $d$ is the same as in \eqref{eq:red}.
\end{lemma}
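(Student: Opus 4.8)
The plan is to derive \eqref{eq:red-approx} for the approximants $I_\mu$ by essentially the same mechanism that produced \eqref{eq:red} for $I$ itself, via Remark~\ref{rem:reduction}. The crucial observation is that \eqref{eq:red} was a purely formal consequence of Corollary~\ref{cor:reduction}, namely of the inclusion $(x_{[k]})^{d+1}\subset I+(\tx)\cdot\mm^d$; so it suffices to prove that, for $\mu$ large enough and every $I_\mu\in U^\mu(I)$, one has $(x_{[k]})^{d+1}\subset I_\mu+(\tx)\cdot\mm^d$, and then repeat verbatim the induction in Remark~\ref{rem:reduction} with $I$ replaced by $I_\mu$. After performing the generic linear change of coordinates supplied by Lemma~\ref{lem:tangent-cone} (so that the distinguished polynomials $P_j(x_j,\tx)=x_j^{d_j}+\sum_{r=1}^{d_j}a^j_r(\tx)x_j^{d_j-r}\in I\cap\mm^{d_j}$ exist for $j=1,\dots,k$), we need to produce, for each such approximant, a polynomial $\widehat{P}_j\in I_\mu$ which still has $x_j^{d_j}$ among its terms and still lies in $\mm^{d_j}$.

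The key step is therefore the following. Each $P_j$ lies in $I=(F_1,\dots,F_s)$, so we may write $P_j=\sum_{i=1}^s H_{j,i}F_i$ for some $H_{j,i}\in\K\{x\}$. Given an approximant $I_\mu=(G_1,\dots,G_s)$ with $j^\mu(G_i)=j^\mu(F_i)$, set $\widehat{P}_j\coloneqq\sum_{i=1}^s j^{\mu-c}(H_{j,i})\,G_i\in I_\mu$, where $c=\max_j\deg_t P_j=\max_j d_j$ (or any bound exceeding the degrees of the $P_j$ in all variables). A direct jet computation gives $j^\mu(\widehat{P}_j)=j^\mu\bigl(\sum_i j^{\mu-c}(H_{j,i})F_i\bigr)=j^\mu(P_j)$ as soon as $\mu\geq c$: indeed $\sum_i H_{j,i}F_i-\sum_i j^{\mu-c}(H_{j,i})F_i\in\mm^{\mu-c+1}\cdot\mm^{0}$, but more care is needed because $F_i$ need not vanish at $0$; the correct estimate is that replacing $H_{j,i}$ by its $(\mu-c)$-jet changes the product by an element of $\mm^{\mu-c+1}$, and replacing $F_i$ by $G_i$ changes the product by an element of $\mm^{\mu+1}$ times a unit-or-non-unit power series — so that $j^{\mu-c}(\widehat{P}_j-P_j)=0$ provided $\mu\geq c$. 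Taking $\mu_0=c=\max_j d_j$, we conclude that for $\mu\geq\mu_0$ the polynomial $\widehat{P}_j$ agrees with $P_j$ up to degree $\mu-c\geq 0$; choosing $\mu_0$ a bit larger (say $\mu_0 = 2c$) guarantees agreement at least up to degree $d_j$, hence $\widehat{P}_j\in\mm^{d_j}$ and $x_j^{d_j}$ is still a term of $\widehat{P}_j$ (its degree-$d_j$ homogeneous part coincides with that of $P_j$).

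With the $\widehat{P}_j\in I_\mu\cap\mm^{d_j}$ in hand, the monomial $x_j^{d_j}=\widehat{P}_j-\sum_{r=1}^{d_j}\hat a^j_r(\tx)\,x_j^{d_j-r}$ lies in $I_\mu+(\tx)\cdot\mm^{d_j-1}$, exactly as in the proof of Corollary~\ref{cor:reduction} (note that the lower-order coefficients of $\widehat{P}_j$ are automatically divisible by $\tx$, since $\widehat{P}_j\equiv P_j$ to high order and $P_j$ is distinguished, i.e.\ $a^j_r(0)=0$; alternatively one writes $\widehat P_j - x_j^{d_j}$ and uses $\widehat P_j\in\mm^{d_j}$ to split off the part divisible by $\tx$). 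Running the argument of Corollary~\ref{cor:reduction} verbatim with $I_\mu$ in place of $I$ yields $(x_{[k]})^{d+1}\subset I_\mu+(\tx)\cdot\mm^d$, and then the formal induction of Remark~\ref{rem:reduction} gives $I_\mu+\mm^{d+m}=I_\mu+(\tx)^m\mm^d$ for all $m\geq1$, which is \eqref{eq:red-approx}.

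The main obstacle is the jet bookkeeping in the key step: one must choose $\mu_0$ large enough that truncating the cofactors $H_{j,i}$ and passing from $F_i$ to $G_i$ does not disturb the terms of $P_j$ of degree $\leq d_j$, and one must be slightly careful that the $F_i$ (equivalently $G_i$) need not lie in $\mm$, so the naive estimate ``error lies in $\mm^{\mu-c+1}$'' is what survives rather than anything sharper. Once the right $\mu_0=\mu_0(d_1,\dots,d_k)$ is fixed, everything else is a transcription of Corollary~\ref{cor:reduction} and Remark~\ref{rem:reduction}. One should also remark that $d=\sum_{j=1}^k(d_j-1)$ is unchanged because the $d_j$, being determined by $\NN_\lambda(I(0))$-data encoded in $\mm^{\mu}$-jets, are the same for $I$ and its close approximants — but in fact we do not even need this, since we only ever use the particular value $d$ from \eqref{eq:red} and the $\widehat P_j$ we built have the same degrees $d_j$ by construction.
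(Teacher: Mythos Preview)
Your overall plan---reduce to showing $(x_{[k]})^{d+1}\subset I_\mu+(\tx)\mm^d$ and then repeat Remark~\ref{rem:reduction}---is sound, but the key step has a genuine gap. The element $\widehat{P}_j=\sum_i j^{\mu-c}(H_{j,i})\,G_i$ you build is \emph{not} a distinguished polynomial in $x_j$; it is an arbitrary element of $I_\mu$, so the displayed identity $x_j^{d_j}=\widehat{P}_j-\sum_{r}\hat a^j_r(\tx)\,x_j^{d_j-r}$ is not available. What you actually obtain is $\widehat{P}_j-P_j\in\mm^{\mu-c+1}$, hence
\[
x_j^{d_j}\;=\;\widehat{P}_j-(P_j-x_j^{d_j})-(\widehat{P}_j-P_j)\;\in\;I_\mu+(\tx)\mm^{d_j-1}+\mm^{\mu-c+1},
\]
and the error term $\widehat{P}_j-P_j$ may very well contain monomials in $x_{[k]}$ alone (say $x_1^N$), which do not lie in $(\tx)$. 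Neither of your parenthetical fixes addresses this: agreement with $P_j$ up to high order controls only the low-degree part, and ``$\widehat{P}_j\in\mm^{d_j}$'' gives no $\tx$-divisibility for the high-degree remainder. So you only get $(x_{[k]})^{d+1}\subset I_\mu+(\tx)\mm^d+\mm^{d+2}$, not the clean inclusion you claim.

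The gap is easily closed by Nakayama's lemma, which you omit: from $(x_{[k]})^{d+1}\subset I_\mu+(\tx)\mm^d+\mm^{d+2}$ one gets $I_\mu+\mm^{d+1}\subset I_\mu+(\tx)\mm^d+\mm\cdot(I_\mu+\mm^{d+1})$, whence $I_\mu+\mm^{d+1}\subset I_\mu+(\tx)\mm^d$. This is precisely how the paper proceeds, but it gets there without constructing $\widehat{P}_j$ at all: one simply uses $I\subset I_\mu+\mm^{\mu+1}$ (immediate from $F_i-G_i\in\mm^{\mu+1}$), so that $(x_{[k]})^{d+1}\subset I+(\tx)\mm^d\subset I_\mu+\mm^{\mu+1}+(\tx)\mm^d$; taking $\mu_0=d+1$ and applying Nakayama gives the result in two lines. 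Your elaborate jet bookkeeping with truncated cofactors is unnecessary and, as written, does not deliver what you need without the same Nakayama step at the end.
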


\begin{proof}
After a generic linear change of coordinates from Lemma~\ref{lem:tangent-cone}, we may assume that $(x_{[k]})^{d+1}\subset I+(\tx)\!\cdot\!\mm^d$, where $d$ is as in \eqref{eq:red}. Set $\mu_0\coloneqq d+1$. Pick $\mu\geq\mu_0$ and $I_\mu\in U^\mu(I)$.
Then, $I\subset I_\mu+\mm^{d+2}$, and hence $(x_{[k]})^{d+1}\subset I_\mu+\mm^{d+2}+(\tx)\mm^d$. It follows that
\[
I_\mu+\mm^{d+1}\subset I_\mu+\mm^{d+2}+(\tx)\mm^d\subset I_\mu+(\tx)\mm^d+(I_\mu+\mm^{d+1})\mm\,,
\]
hence $I_\mu+\mm^{d+1}\subset I_\mu+(\tx)\mm^d$, by Nakayama's lemma. The claim now follows as in Remark~\ref{rem:reduction}.
\end{proof}

Let us recall now a results from \cite{AS3} describing the connection between the diagram of initial exponents of $I$ and those of its approximations $I_\mu$. We include a short proof for the reader's convenience.

\begin{lemma}[{cf.\,\cite[Lem.\,3.2]{AS3}}]
\label{lem:diagram-up-to-l}
Let $I$ be an ideal in $\K\{x\}$ and let $\LL$ be a positive linear form on $\K^n$. Let $l_0=\max\{\LL(\beta^i):1\leq i\leq t\}$, where $\beta^1,\dots,\beta^t$ are the vertices of the diagram $\NN_\LL(I)$. Then:
\begin{itemize}
\item[(i)] For every $\mu\geq l_0$ and $I_\mu\in U_\LL^\mu(I)$, we have $\NN_\LL(I_\mu)\supset\NN_\LL(I)$.
\item[(ii)] Given $l\geq l_0$, for every $\mu\geq l$ and $I_\mu\in U_\LL^\mu(I)$, we have
\[
\NN_\LL(I_\mu)\cap\{\beta\in\N^n:\LL(\beta)\leq l\}=\NN_\LL(I)\cap\{\beta\in\N^n:\LL(\beta)\leq l\}\,.
\]
\end{itemize}
\end{lemma}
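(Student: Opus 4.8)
The plan is to prove both parts using Hironaka division together with Remark~\ref{rem:jet-exp}, which guarantees that sufficiently high jets preserve initial exponents. Fix representatives $G_1,\dots,G_t\in I$ of the vertices $\beta^1,\dots,\beta^t$ of $\NN_\LL(I)$, so that $\inexp_\LL(G_i)=\beta^i$ and, by Corollary~\ref{cor:Hir-diagram}, the $G_i$ generate $I$; moreover $\NN_\LL(I)=\bigcup_{i=1}^t\Delta_i$ for the associated partition $\{\Delta_i,\Delta\}$ of $\N^n$. Since each $G_i$ is expressible as a $\K\{x\}$-combination of the chosen generators $F_1,\dots,F_s$ of $I$, for $\mu\geq l_0$ and any $I_\mu\in U_\LL^\mu(I)$ we can form $G_i^{(\mu)}\in I_\mu$ by replacing each $F_j$ with its $U_\LL^\mu$-approximant; then $j_\LL^\mu(G_i^{(\mu)})=j_\LL^\mu(G_i)$, so by Remark~\ref{rem:jet-exp} (using $\mu\geq l_0\geq\LL(\beta^i)$) we get $\inexp_\LL(G_i^{(\mu)})=\beta^i$ as well. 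Hence each $\beta^i\in\NN_\LL(I_\mu)$, and since diagrams are closed under addition of $\N^n$ this gives $\NN_\LL(I)=\bigcup_i(\beta^i+\N^n)\subset\NN_\LL(I_\mu)$, proving (i).

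For (ii), fix $l\geq l_0$, take $\mu\geq l$ and $I_\mu\in U_\LL^\mu(I)$. By (i) it remains to show the reverse inclusion on the sublevel set $\{\LL(\beta)\leq l\}$, i.e. that any $\gamma\in\NN_\LL(I_\mu)$ with $\LL(\gamma)\leq l$ already lies in $\NN_\LL(I)$. Suppose not: then $\gamma\in\Delta=\N^n\setminus\NN_\LL(I)$, and $\gamma=\inexp_\LL(H)$ for some $H\in I_\mu$. Perform Hironaka division (Theorem~\ref{thm:Hir-div}) of $H$ by $G_1^{(\mu)},\dots,G_t^{(\mu)}$ relative to $\LL$: since these have initial exponents $\beta^1,\dots,\beta^t$, we get $H=\sum_i Q_i G_i^{(\mu)}+R$ with $\supp R\subset\Delta$ and the initial exponents $\inexp_\LL(Q_iG_i^{(\mu)})=\inexp_\LL(Q_i)+\beta^i$ lying in the pairwise disjoint regions $\Delta_i$. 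It follows that $\inexp_\LL(H)=\min_\LL\{\inexp_\LL(R),\inexp_\LL(Q_iG_i^{(\mu)}):i\}$. If the minimum is attained by some $Q_iG_i^{(\mu)}$ term then $\inexp_\LL(H)\in\Delta_i\subset\NN_\LL(I)$, contradicting $\gamma\in\Delta$; so the minimum is attained by $R$, giving $\gamma=\inexp_\LL(R)\in\supp R\subset\Delta$, consistent so far — but now compare $H$ with the division in $\K\{x\}$ modulo jets. The point is that $R=H-\sum_iQ_iG_i^{(\mu)}\in I_\mu$, and truncating at level $\mu\geq l\geq\LL(\gamma)$ one can replace $R$ by a nonzero series supported in $\Delta\cap\{\LL(\beta)\leq l\}$ lying in $I_\mu$; running the same division in $\K\{x\}$ with the exact generators $G_i$ and using that $j_\LL^\mu(G_i^{(\mu)})=j_\LL^\mu(G_i)$ together with Remark~\ref{rem:jet-exp} shows the corresponding remainder in $\K\{x\}$ has the same initial exponent $\gamma$, forcing $\gamma\in\NN_\LL(I)$, a contradiction. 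Hence no such $\gamma$ exists, and (ii) follows.

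The main obstacle is the last step of (ii): transferring the contradiction from the perturbed ideal $I_\mu$ back to $I$. The subtlety is that $R\in I_\mu$ need not lie in $I$, so one cannot directly conclude $\inexp_\LL(R)\in\NN_\LL(I)$; the argument must exploit that $R$ has $\LL$-order at most $l\leq\mu$, so that its membership in $I_\mu$ is detected by the $\mu$-jets of the generators, which agree with those of the generators of $I$. Concretely I expect to argue: if $\gamma=\inexp_\LL(R)\in\Delta$ with $\LL(\gamma)\leq l$, write $R=\sum_j\widetilde Q_j F_j^{(\mu)}$ in $I_\mu$; apply $j_\LL^\mu$ to get $j_\LL^\mu(R)=\sum_j j_\LL^\mu(\widetilde Q_j)\,j_\LL^\mu(F_j)$ in $\K\{x\}/\nn_{\LL,\mu+1}$, so there exists $R'\in I$ with $j_\LL^\mu(R')=j_\LL^\mu(R)$; since $\LL(\gamma)\leq l\leq\mu$, Remark~\ref{rem:jet-exp} gives $\inexp_\LL(R')=\gamma$, whence $\gamma\in\NN_\LL(I)$ — contradiction. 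Everything else (forming the $G_i^{(\mu)}$, the disjointness of division regions, closure of diagrams under $+\N^n$) is routine bookkeeping once this jet-transfer mechanism is in place.
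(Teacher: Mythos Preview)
Your argument is correct and rests on the same jet-transfer mechanism as the paper's proof: equality of $\mu$-jets of the generators combined with Remark~\ref{rem:jet-exp} to preserve initial exponents. In part~(ii), though, the Hironaka division of $H$ by the $G_i^{(\mu)}$ is an unnecessary detour---the jet-transfer step you spell out at the end for $R$ applies verbatim to $H$ itself, which is exactly what the paper does: write $H=\sum_j\widetilde Q_jF_j^{(\mu)}$, pass to $\mu$-jets to obtain $H'=\sum_j\widetilde Q_jF_j\in I$ with $j_\LL^\mu(H')=j_\LL^\mu(H)$, and conclude directly that $\inexp_\LL(H')=\gamma\in\NN_\LL(I)$.
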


\begin{proof}
Fix $\mu\geq l_0$ and let $G_1,\dots,G_s\in\K\{x\}$ be such that $I_\mu=(G_1,\dots,G_s)$ and $j_\LL^\mu(G_i)=j_\LL^\mu(F_i)$, $1\leq i\leq s$. 
By Remark~\ref{rem:vertices}, for the proof of (i) it suffices to show that the vertices of $\NN_\LL(I)$ are contained in $\NN_\LL(I_\mu)$.
Let then $F\in I$ be a representative of a vertex of $\NN_\LL(I)$ (i.e., $\inexp_\LL(F)\in V_\LL(I)$). We can write $F=\sum_{i=1}^sH_iF_i$, for some $H_i\in\K\{x\}$. Then,
\[
j_\LL^\mu(F)=j_\LL^\mu(\sum_{i=1}^sH_iF_i)=j_\LL^\mu(\sum_{i=1}^sH_i\!\cdot\!j_\LL^\mu{F_i})=j_\LL^\mu(\sum_{i=1}^sH_i\!\cdot\!j_\LL^\mu{G_i})
=j_\LL^\mu(\sum_{i=1}^sH_iG_i)\,,
\]
and hence, by Remark~\ref{rem:jet-exp}, we have $\inexp_\LL(F)=\inexp_\LL(\sum_{i=1}^sH_iG_i)$.
It follows that $\inexp_\LL(F)\in\NN_\LL(I_\mu)$, which proves (i).

For the proof of part (ii), fix $l\geq l_0$. Let $\mu\geq l$ and let $I_\mu=(G_1,\dots,G_s)$ with $j_\LL^\mu(G_i)=j_\LL^\mu(F_i)$, $1\leq i\leq s$. By part (i), it now suffices to show that
\[
\NN_\LL(I_\mu)\cap\{\beta\in\N^n:\LL(\beta)\leq l\}\ \subset\ \NN_\LL(I)\cap\{\beta\in\N^n:\LL(\beta)\leq l\}\,.
\]
Pick $\beta^*\in\N^n\setminus\NN_\LL(I)$ with $\LL(\beta^*)\leq l$. Suppose that $\beta^*\in\NN_\LL(I_\mu)$. Then, one can choose $G\in I_\mu$ with $\inexp_\LL(G)=\beta^*$. Write $G=\sum_{i=1}^sH_iG_i$ for some $H_i\in\K\{x\}$.
We have
\[
j_\LL^\mu(G)=j_\LL^\mu(\sum_{i=1}^sH_iG_i)=j_\LL^\mu(\sum_{i=1}^sH_i\cdot j_\LL^\mu{G_i})=j_\LL^\mu(\sum_{i=1}^sH_i\cdot j_\LL^\mu{F_i})=j_\LL^\mu(\sum_{i=1}^sH_iF_i)\,,
\]
and since $\mu\geq l\geq\LL(\inexp_\LL(G))$, it follows that $\inexp_\LL(G)=\inexp_\LL(\sum_{i=1}^sH_iF_i)$, by Remark~\ref{rem:jet-exp} again. Therefore $\beta^*\in\NN_\LL(I)$; a contradiction.
\end{proof}

\begin{corollary}
\label{cor:diagram-finite}
Let $I$ be an ideal in $\K\{x\}$ and let $\LL$ be a positive linear form on $\K^n$. Suppose that the complement $\N^n\setminus\NN_\LL(I)$ is finite. Then, there exists $\mu_0\in\N$ such that, for every $\mu\geq\mu_0$ and $I_\mu\in U_\LL^\mu(I)$, we have $\NN_\LL(I_\mu)=\NN_\LL(I)$.
\end{corollary}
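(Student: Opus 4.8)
The plan is to deduce Corollary~\ref{cor:diagram-finite} from Lemma~\ref{lem:diagram-up-to-l} by exploiting the hypothesis that $\N^n\setminus\NN_\LL(I)$ is finite. Since the complement is finite, there is a bound $l_1$ such that $\LL(\beta)<l_1$ for every $\beta\in\N^n\setminus\NN_\LL(I)$; equivalently, $\{\beta\in\N^n:\LL(\beta)\geq l_1\}\subset\NN_\LL(I)$. Let $l_0=\max\{\LL(\beta^i):1\leq i\leq t\}$ be as in Lemma~\ref{lem:diagram-up-to-l}, where $\beta^1,\dots,\beta^t$ are the vertices of $\NN_\LL(I)$, and set $\mu_0\coloneqq\max\{l_0,l_1\}$. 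I would then fix an arbitrary $\mu\geq\mu_0$ and $I_\mu\in U_\LL^\mu(I)$ and show $\NN_\LL(I_\mu)=\NN_\LL(I)$ by proving both inclusions.

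The inclusion $\NN_\LL(I)\subset\NN_\LL(I_\mu)$ is immediate from part (i) of Lemma~\ref{lem:diagram-up-to-l}, since $\mu\geq\mu_0\geq l_0$. For the reverse inclusion, I would split $\N^n$ according to the value of $\LL$. On the region $\{\beta:\LL(\beta)\leq\mu_0\}$, apply part (ii) of Lemma~\ref{lem:diagram-up-to-l} with $l=\mu_0$ (legitimate because $\mu\geq\mu_0\geq l_0$): this gives $\NN_\LL(I_\mu)\cap\{\LL(\beta)\leq\mu_0\}=\NN_\LL(I)\cap\{\LL(\beta)\leq\mu_0\}$. On the complementary region $\{\beta:\LL(\beta)>\mu_0\}$, every such $\beta$ satisfies $\LL(\beta)\geq l_1$ (since $\mu_0\geq l_1$ and $\LL$ takes values that, for the purpose of this argument, we only need to compare; more carefully one uses $\LL(\beta)>\mu_0\geq l_1$), hence $\beta\in\NN_\LL(I)$ by the choice of $l_1$. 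Therefore on this region the trivial inclusion $\N^n\setminus\NN_\LL(I)=\emptyset$ there forces any $\beta$ with $\LL(\beta)>\mu_0$ to lie in $\NN_\LL(I)$, so $\NN_\LL(I_\mu)\cap\{\LL(\beta)>\mu_0\}\subset\NN_\LL(I)$ holds vacuously on the complement side. Combining the two regions yields $\NN_\LL(I_\mu)\subset\NN_\LL(I)$, and with part (i) we conclude equality.

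I do not anticipate a serious obstacle here: the corollary is essentially a packaging of Lemma~\ref{lem:diagram-up-to-l} once one observes that finiteness of the complement converts the "up to level $l$" statement of part (ii) into a statement about the whole diagram, because above a fixed level the diagram is forced to be everything. The only point requiring minor care is bookkeeping of the two thresholds $l_0$ (coming from the vertices, needed to invoke the lemma) and $l_1$ (coming from finiteness of the complement, controlling the high-$\LL$ region); taking $\mu_0$ to be their maximum handles both simultaneously. One should also note that the $\mu_0$ produced is uniform over all $I_\mu\in U_\LL^\mu(I)$, as required, since it depends only on $I$ and $\LL$ through the vertices of $\NN_\LL(I)$ and the finite complement.
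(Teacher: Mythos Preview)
Your proposal is correct and follows essentially the same approach as the paper: choose $\mu_0$ large enough to dominate both the vertex levels and the finite complement, then combine parts (i) and (ii) of Lemma~\ref{lem:diagram-up-to-l}. The only cosmetic difference is that the paper phrases the argument in terms of the complements $\N^n\setminus\NN_\LL(I)$ and $\N^n\setminus\NN_\LL(I_\mu)$ rather than the diagrams themselves, which makes the ``high region'' step slightly cleaner to state.
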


\begin{proof}
Let $\beta^1,\dots,\beta^t$ be the vertices of $\NN_\LL(I)$.
Since $\N^n\setminus\NN_\LL(I)$ is finite, there exists $\mu_0\geq\max_i\LL(\beta^i)$ such that
\[
\N^n\setminus\NN_\LL(I)\subset\{\beta\in\N^n:\LL(\beta)\leq\mu_0\}\,.
\]
By Lemma~\ref{lem:diagram-up-to-l} part (i), for every $\mu\geq\mu_0$ and $I_\mu\in U_\LL^\mu(I)$, we have
\[
\N^n\setminus\NN_\LL(I_\mu)\subset\N^n\setminus\NN_\LL(I)
\]
and by part (ii)
\[
(\N^n\setminus\NN_\LL(I_\mu))\cap\{\beta\in\N^n:\LL(\beta)\leq\mu_0\}=(\N^n\setminus\NN_\LL(I))\cap\{\beta\in\N^n:\LL(\beta)\leq\mu_0\}\,.
\]
Thus, $\N^n\setminus\NN_\LL(I_\mu)=\N^n\setminus\NN_\LL(I)$, as required.
\end{proof}

The following proposition is a key tool in the proofs of Theorems~\ref{thm:CI-approx} and~\ref{thm:CM-approx}. It shows that the equality of diagrams of an ideal $I$ and its approximation $I_\mu$ with respect to some ordering on $\N^n$ implies the equality of diagrams with respect to the standard ordering.

\begin{proposition}
\label{prop:HS-stability}
Let $I=(F_1,\dots,F_s)$ be an ideal in $\K\{x\}$ with $\dim\K\{x\}/I=n-k$. Then, after a generic linear change of coordinates in $\K^n$, there exists $\mu_0$ such that, for every $\mu\geq\mu_0$ the following holds: If $I_\mu\in U^\mu(I)$ is such that $I_\mu\cap(\tx)=I_\mu\!\cdot(\tx)$ and $\dim_\K\K\{x\}/I+(\tx)^m=\dim_\K\K\{x\}/I_\mu+(\tx)^m$ for all $m\geq1$, then $H_I=H_{I_\mu}$ (that is, the Hilbert-Samuel functions of $I$ and $I_\mu$ coincide).
\end{proposition}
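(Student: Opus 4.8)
The plan is to pass to a weighted ordering on $\N^n$ in which the flatness of $\K\{x\}/I$ over $\K\{\tx\}$ forces the diagram to factor as a product, compare the diagrams of $I$ and $I_\mu$ with respect to that ordering, and then use the reduction of the maximal ideal to translate the numerical hypothesis into an equality of Hilbert--Samuel functions. More precisely, the hypothesis that $I_\mu\cap(\tx)=I_\mu\cdot(\tx)$ says exactly that $\K\{x\}/I_\mu$ is $\K\{\tx\}$-flat (by the local criterion for flatness quoted in the proof of Proposition~\ref{prop:flatness-diagram}). Since $I_\mu\in U^\mu(I)$, if $\mu$ is large enough then $\dim\K\{x\}/I_\mu=n-k$ as well, by Corollary~\ref{cor:diagram-finite} applied to the (finite-complement, by $\mm$-primarity of the relevant ideals) diagrams of $I+(\tx)$ and $I_\mu+(\tx)$; hence $\K\{\tx\}\to\K\{x\}/I_\mu$ is a finite injective map, and the same change of coordinates that works for $I$ (from Proposition~\ref{prop:Narasimhan} and Lemma~\ref{lem:tangent-cone}) works for $I_\mu$. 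We may also assume $\mu\geq\mu_0$ so that Lemma~\ref{lem:red-approx} applies to $I_\mu$.

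First I would invoke Proposition~\ref{prop:flatness-diagram}(ii): by $\K\{\tx\}$-flatness of both $\K\{x\}/I$ and $\K\{x\}/I_\mu$, there is an $l_1\in\N$ so that for the linear form $\LL_l(\beta)=\sum_{i\le k}\beta_i+\sum_{j>k}l\beta_j$ with $l\ge l_1$ we have $\NN_{\LL_l}(I)=D\times\N^{n-k}$ and $\NN_{\LL_l}(I_\mu)=D'\times\N^{n-k}$ for sets $D,D'\subset\N^k$, and by Remark~\ref{rem:base} in fact $D=\NN(I(0))$ and $D'=\NN(I_\mu(0))$, where $(\cdot)(0)$ denotes evaluation of $\tx$ at $0$. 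Next I would pin down $D=D'$. Evaluating $\tx=0$ is the same as working in $\K\{x_{[k]}\}\cong\K\{x\}/(\tx)$, so $I(0)$ is (the image of) $I+(\tx)$ and $I_\mu(0)$ is that of $I_\mu+(\tx)$; the $\K$-dimension equalities $\dim_\K\K\{x\}/(I+(\tx)^m)=\dim_\K\K\{x\}/(I_\mu+(\tx)^m)$ in the hypothesis, together with the reduction identity \eqref{eq:red} for $I$ and \eqref{eq:red-approx} for $I_\mu$, force $\dim_\K\K\{x_{[k]}\}/(I(0)+\mm_{[k]}^\eta)=\dim_\K\K\{x_{[k]}\}/(I_\mu(0)+\mm_{[k]}^\eta)$ for every $\eta$ (here $\mm_{[k]}$ is the maximal ideal of $\K\{x_{[k]}\}$, and I use that the reductions let one control $\mm^\eta$-cofilt­rations in terms of powers of $(\tx)$ together with finitely much data killed by the $c$-jet). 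So $I(0)$ and $I_\mu(0)$ have the same Hilbert--Samuel function; by Lemma~\ref{lem:HS-diagram-complement} their diagrams $D=\NN(I(0))$ and $D'=\NN(I_\mu(0))$ have the same finite complement in $\N^k$, and since $I_\mu\in U^\mu(I)$ with $\mu$ large, Lemma~\ref{lem:diagram-up-to-l}(i) applied in the $x_{[k]}$-variables gives $D'\supset D$; equal complements then force $D=D'$. Hence $\NN_{\LL_l}(I)=\NN_{\LL_l}(I_\mu)$ for all $l\ge l_1$.

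Finally I would upgrade this to equality of the standard diagrams $\NN(I)$ and $\NN(I_\mu)$. The point is that for any single $\eta$, the ``sub-level'' truncation $\{\beta:|\beta|\le\eta\}$ of a diagram is determined by the weighted diagram $\NN_{\LL_l}$ once $l$ is chosen large enough relative to $\eta$ (a $\beta$ with $\beta_j>0$ for some $j>k$ and $|\beta|\le\eta$ gets $\LL_l$-weight $\ge l$, which exceeds the $\LL_l$-weights of all relevant ``pure $x_{[k]}$'' exponents, so membership in $\NN_{\LL_l}$ restricted to that range reads off exactly as membership in $\NN$ restricted to that range). Concretely: $\#\{\beta\in\N^n\setminus\NN(I):|\beta|\le\eta\}$ and $\#\{\beta\in\N^n\setminus\NN(I_\mu):|\beta|\le\eta\}$ can both be extracted from $\NN_{\LL_l}(I)=D\times\N^{n-k}$ and $\NN_{\LL_l}(I_\mu)=D\times\N^{n-k}$ respectively for $l\gg\eta$ — and these are equal. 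By Lemma~\ref{lem:HS-diagram-complement} the left sides are $H_I(\eta)$ and $H_{I_\mu}(\eta)$, so $H_I=H_{I_\mu}$, as claimed. The main obstacle I anticipate is the bookkeeping in the middle step: making the passage between ``$\K$-dimensions of $\K\{x\}/(I+(\tx)^m)$'' and ``Hilbert--Samuel function of $I(0)$'' genuinely rigorous, i.e.\ checking that the reductions from Section~\ref{sec:reduction} (which control $\mm$-adic filtrations by $(\tx)$-adic ones up to a fixed exponent $d$) interact correctly with evaluation at $\tx=0$ and with the jet hypothesis $I_\mu\in U^\mu(I)$, so that $D=D'$ really follows. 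Everything else is either a direct citation or the elementary weight-comparison argument above.
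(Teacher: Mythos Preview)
Your approach has two genuine gaps that prevent it from proving the proposition as stated.

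\medskip

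\textbf{First gap: you assume $\K\{x\}/I$ is $\K\{\tx\}$-flat.} The hypothesis $I_\mu\cap(\tx)=I_\mu\cdot(\tx)$ gives flatness of $\K\{x\}/I_\mu$, but nothing in the statement of Proposition~\ref{prop:HS-stability} says $\K\{x\}/I$ is flat over $\K\{\tx\}$; only the dimension $\dim\K\{x\}/I=n-k$ is assumed. Your invocation of Proposition~\ref{prop:flatness-diagram}(ii) for $I$ (to get $\NN_{\LL_l}(I)=D\times\N^{n-k}$) is therefore unjustified. The paper's proof never uses flatness of $I$; it works directly with the reduction identities \eqref{eq:red-I} and \eqref{eq:red-I-mu}, the Artin--Rees exponent of $I$ relative to $(\tx)$, and short exact sequences relating $\K\{x\}/(I+\nn^m\mm^d)$ to $\K\{x\}/(I+\nn^m)$ (and likewise for $I_\mu$), together with the flatness of $I_\mu$ only in the form $I_\mu\cap\nn^m=I_\mu\nn^m$.

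\medskip

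\textbf{Second gap: the ``upgrading'' step is not correct.} Even granting flatness of both quotients and the equality $\NN_{\LL_l}(I)=\NN_{\LL_l}(I_\mu)=D\times\N^{n-k}$, your claim that the sub-level set $\NN(I)\cap\{|\beta|\le\eta\}$ can be read off from $\NN_{\LL_l}(I)$ for $l\gg\eta$ is false: the standard diagram $\NN(I)$ and the weighted diagram $\NN_{\LL_l}(I)$ are in general different subsets of $\N^n$, because initial exponents depend on the ordering. What equality of weighted diagrams does give, via Lemma~\ref{lem:HS-diagram-complement}, is equality of $\dim_\K\K\{x\}/(I+\nn_{\LL_l,\eta+1})$ and $\dim_\K\K\{x\}/(I_\mu+\nn_{\LL_l,\eta+1})$; but for $l>\eta$ one has $\nn_{\LL_l,\eta+1}=(x_{[k]})^{\eta+1}+(\tx)$, so this only recovers $H_{I(0)}(\eta)=H_{I_\mu(0)}(\eta)$, not $H_I(\eta)=H_{I_\mu}(\eta)$. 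More generally (for moderate $l$), the computation in the proof of Theorem~\ref{thm:CI-approx} shows $I+\nn_{\LL,ml}=I+(\tx)^m$, so the equality of weighted diagrams gives back exactly the \emph{hypothesis} $\dim_\K\K\{x\}/(I+(\tx)^m)=\dim_\K\K\{x\}/(I_\mu+(\tx)^m)$, not the conclusion. In short, the passage from the $(\tx)$-adic data to the $\mm$-adic Hilbert--Samuel function is precisely the content of the proposition, and your argument does not supply it. The paper handles this by comparing $(I\cap\nn^m)+\nn^m\mm^d$ with $(I_\mu\cap\nn^m)+\nn^m\mm^d$ via Artin--Rees and the flatness of $I_\mu$, producing an explicit isomorphism between the relevant graded pieces.
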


\begin{proof}
To simplify notation, we shall write $\nn$ for the ideal $(\tx)$ in $\K\{x\}$.
By Lemma~\ref{lem:tangent-cone}, Remark~\ref{rem:reduction} and Lemma~\ref{lem:red-approx}, after a generic linear change of coordinates in $\K^n$, we may assume that there exist positive integers $d$ and $\mu_0$ such that
\begin{equation}
\label{eq:red-I}
I+\mm^{d+m}\,=\,I+\nn^m\mm^d, \mathrm{\ for\ all\ }m\geq1\,,
\end{equation}
and for every $\mu\geq\mu_0$ and $I_\mu\in U^\mu(I)$
\begin{equation}
\label{eq:red-I-mu}
I_\mu+\mm^{d+m}\,=\,I_\mu+\nn^m\mm^d, \mathrm{\ for\ all\ }m\geq1\,.
\end{equation}
By Lemmas~\ref{lem:HS-diagram-complement} and~\ref{lem:diagram-up-to-l}, taking $\mu_0$ sufficiently large, we always have $H_I(\eta)\geq H_{I_\mu}(\eta)$ for all $\eta\geq1$ and $\mu\geq\mu_0$. Therefore, to prove the equality $H_I=H_{I_\mu}$, it suffices to show that $H_I(\eta)=H_{I_\mu}(\eta)$ for all $\eta\geq d$, or equivalently that
\begin{equation}
\label{eq:HS1equal}
\dim_\K\frac{\K\{x\}}{I+\nn^m\mm^d}\ =\ \dim_\K\frac{\K\{x\}}{I_\mu+\nn^m\mm^d}\qquad\mathrm{for\ all\ }m\geq1.
\end{equation}
Fix $\mu\geq\mu_0$ and $I_\mu\in U^\mu(I)$. 
We have, for $m\geq1$, the following exact sequences
\begin{align*}
0\ \to\ \frac{I+\nn^m}{I+\nn^m\mm^d}\ \to\ &\frac{\K\{x\}}{I+\nn^m\mm^d}\ \to\ \frac{\K\{x\}}{I+\nn^m}\ \to\ 0\,,\\
0\ \to\ \frac{I_\mu+\nn^m}{I_\mu+\nn^m\mm^d}\ \to\ &\frac{\K\{x\}}{I_\mu+\nn^m\mm^d}\ \to\ \frac{\K\{x\}}{I_\mu+\nn^m}\ \to\ 0\,.
\end{align*}
By assumption, $\dim_\K\K\{x\}/(I+\nn^m)=\dim_\K\K\{x\}/(I_\mu+\nn^m)$, and hence to prove \eqref{eq:HS1equal}, it suffices to show that
\[
\dim_\K\frac{I+\nn^m}{I+\nn^m\mm^d}\ =\ \dim_\K\frac{I_\mu+\nn^m}{I_\mu+\nn^m\mm^d}\qquad\mathrm{for\ all\ }m\geq1.
\]
Note that
\begin{equation}
\label{eq:iso1}
\frac{I+\nn^m}{I+\nn^m\mm^d}\cong\frac{\nn^m}{(I+\nn^m\mm^d)\cap\nn^m}=\frac{\nn^m}{(I\cap\nn^m)+\nn^m\mm^d}
\end{equation}
and
\begin{equation}
\label{eq:iso2}
\frac{I_\mu+\nn^m}{I_\mu+\nn^m\mm^d}\cong\frac{\nn^m}{(I_\mu+\nn^m\mm^d)\cap\nn^m}=\frac{\nn^m}{(I_\mu\cap\nn^m)+\nn^m\mm^d}\,.
\end{equation}
Let $\lambda$ be the Artin-Rees exponent of $I$ relative to $\nn$. That is, we have $I\cap\nn^m=(I\cap\nn^\lambda)\nn^{m-\lambda}$ for all $m\geq\lambda$. For the remainder of the proof we are going to assume that $\mu_0\geq d+\lambda$. Then,
$I_\mu\subset I+\mm^{\mu+1}\subset I+\nn^\lambda\mm^d$, by~\eqref{eq:red-I},
and conversely, $I\subset I_\mu+\mm^{\mu+1}\subset I_\mu+\nn^\lambda\mm^d$, by~\eqref{eq:red-I-mu},
whence
\begin{equation}
\label{eq:AR}
I+\nn^\lambda\mm^d=I_\mu+\nn^\lambda\mm^d,\quad\mathrm{for\ any\ }\mu\geq\mu_0.
\end{equation}
We now claim that $(I\cap\nn^m)+\nn^m\mm^d\subset(I_\mu\cap\nn^m)+\nn^m\mm^d$, for all $m\geq1$.
Indeed, for $m<\lambda$, the inclusion $I\subset I_\mu+\nn^\lambda\mm^d$ implies
\[
I+\nn^m\mm^d\subset I_\mu+\nn^\lambda\mm^d+\nn^m\mm^d=I_\mu+\nn^m\mm^d\,,
\]
and hence
\[
(I\cap\nn^m)+\nn^m\mm^d=(I+\nn^m\mm^d)\cap\nn^m\subset(I_\mu+\nn^m\mm^d)\cap\nn^m=(I_\mu\cap\nn^m)+\nn^m\mm^d\,.
\]
If, in turn, $m\geq\lambda$, then~\eqref{eq:AR} yields
\begin{gather}
\notag
(I\cap\nn^m)+\nn^m\mm^d=(I\cap\nn^\lambda)\nn^{m-\lambda}+\nn^m\mm^d=((I\cap\nn^\lambda)+\nn^\lambda\mm^d)\nn^{m-\lambda}\\
\notag
=((I+\nn^\lambda\mm^d)\cap\nn^\lambda)\nn^{m-\lambda}=((I_\mu+\nn^\lambda\mm^d)\cap\nn^\lambda)\nn^{m-\lambda}=((I_\mu\cap\nn^\lambda)+\nn^\lambda\mm^d)\nn^{m-\lambda}\\
\notag
=(I_\mu\cap\nn^\lambda)\nn^{m-\lambda}+\nn^m\mm^d\subset(I_\mu\cap\nn^m)+\nn^m\mm^d\,.
\end{gather}
By \eqref{eq:iso1} and~\eqref{eq:iso2}, the above implies that there is, for every $m\geq1$, a well-defined epimorphism
\[
\frac{I+\nn^m}{I+\nn^m\mm^d}\cong\frac{\nn^m}{(I+\nn^m\mm^d)\cap\nn^m}\ \stackrel{\vp_m}{\longrightarrow}\ 
\frac{\nn^m}{(I_\mu\cap\nn^m)+\nn^m\mm^d}\cong\frac{I_\mu+\nn^m}{I_\mu+\nn^m\mm^d}\,.
\]
To complete the proof, it thus suffices to show that $\ker\vp_m=(0)$, or equivalently that $I_\mu\cap\nn^m\subset I+\nn^m\mm^d$, for all $m\geq1$.

Recall that, by assumption, we have $I_\mu\cap\nn=I_\mu\nn$. It follows by induction that $I_\mu\cap\nn^m=I_\mu\nn^m$, and hence $I_\mu\cap(I_\mu+\nn)^m=I_\mu(I_\mu+\nn)^{m-1}$, for all $m\geq1$. Moreover, by~\eqref{eq:red-I}, $I_\mu\subset I+\mm^{\mu+1}\subset I+\nn$,
 and by~\eqref{eq:red-I-mu}, $I\subset I_\mu+\mm^{\mu+1}\subset I_\mu+\nn$, hence $I+\nn=I_\mu+\nn$.
Finally, by~\eqref{eq:AR}, we also have $I_\mu\subset I+\nn\mm^d$, hence the sequence of inclusions
\begin{multline}
\notag
I_\mu\cap\nn^m\subset I_\mu\cap(I_\mu+\nn)^m=I_\mu(I_\mu+\nn)^{m-1}=I_\mu(I+\nn)^{m-1}\\
\subset I+I_\mu\nn^{m-1}\subset I+(I+\nn\mm^d)\nn^{m-1}\subset I+\nn^m\mm^d\,.
\end{multline}
\end{proof}
\medskip

\section{Approximation of complete intersections}
\label{sec:CI}

Recall that an ideal $I$ in a regular local ring $A$ is called a \emph{complete intersection} when $I$ can be generated by $\dim{A}-\dim{A/I}$ elements. An analytic germ $(X,0)$ in $(\K^n,0)$ is a \emph{complete intersection singularity} when it is the zero set germ of a complete intersection ideal. The main result of this section, Theorem~\ref{thm:CI-approx} below, asserts that a complete intersection singularity can be arbitrarily closely approximated by germs of algebraic sets which are also complete intersections and share the same Hilbert-Samuel function.

We begin with a simple but useful observation.

\begin{proposition}
\label{prop:diagram-dim}
For an ideal $I$ in $\K\{x\}$, the following conditions are equivalent:
\begin{itemize}
\item[(i)] $\dim(\K\{x\}/I)\leq\dim\K\{x\}-k$.
\item[(ii)] After a generic linear change of coordinates in $\K^n$, the diagram $\NN(I)$ has a vertex on each of the first $k$ coordinate axes in $\N^n$.
\end{itemize}
\end{proposition}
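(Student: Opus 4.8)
The plan is to connect the dimension of $\K\{x\}/I$ with the combinatorics of the complement $\N^n\setminus\NN(I)$ via the Hilbert-Samuel function, using the standard fact that $\dim(\K\{x\}/I) = \deg H_I + 1$, where $\deg H_I$ denotes the degree of the Hilbert-Samuel polynomial of $\K\{x\}/I$ (equivalently, the dimension equals the order of pole of the Hilbert series). By Lemma~\ref{lem:HS-diagram-complement}, $H_I(\eta) = \#\{\beta \in \N^n\setminus\NN(I) : |\beta|\leq\eta\}$, so I would reformulate both conditions as statements about the "shape" of the staircase complement $\N^n\setminus\NN(I)$: condition~(i) says this complement is contained, up to finitely many points, in a union of coordinate subspaces of dimension at most $n-k$, which in turn is controlled by whether the diagram meets each of the first $k$ axes.

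First I would prove the easy implication, that~(ii) implies~(i), without needing a coordinate change to be generic: if $\NN(I)$ has a vertex $v^{(j)}$ on the $j$-th axis for $j=1,\dots,k$, say $v^{(j)} = c_j e_j$ with $c_j\geq 1$, then for every $\beta\in\N^n\setminus\NN(I)$ we have $\beta_j < c_j$ for each $j\leq k$ (since $\beta_j\geq c_j$ for some such $j$ would force $\beta\in v^{(j)}+\N^n\subset\NN(I)$). Hence the complement is contained in the box $\{0,\dots,c_1-1\}\times\cdots\times\{0,\dots,c_k-1\}\times\N^{n-k}$, and counting lattice points of bounded $|\beta|$ in this set gives $H_I(\eta)\leq C\cdot\binom{\eta+n-k}{n-k}$ for a constant $C = c_1\cdots c_k$; therefore $\deg H_I\leq n-k-1$, i.e. $\dim(\K\{x\}/I)\leq n-k = \dim\K\{x\}-k$.

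For the converse, I would argue contrapositively: suppose after no generic linear change does $\NN(I)$ acquire a vertex on, say, the first $k$ coordinate axes; more precisely, suppose (for a generic, hence any suitably general, choice of coordinates) the diagram fails to have a vertex on the $j_0$-th axis for some $j_0\leq k$. Failing to have a vertex on the $x_{j_0}$-axis means $c\,e_{j_0}\notin\NN(I)$ for all $c\in\N$, i.e. no power of $x_{j_0}$ lies in the "leading ideal" sense — equivalently, by Corollary~\ref{cor:Hir-diagram}, the monomial ideal generated by the complement contains the entire $x_{j_0}$-axis in its complement. Then the complement $\N^n\setminus\NN(I)$ contains the infinite line $\N e_{j_0}$, but more usefully I would invoke that genericity of coordinates makes the diagram "as large as possible" in a precise sense (this is where I would cite or reprove that after a generic linear change $\NN(I)$ contains a vertex on the $j$-th axis iff $x_j$ is integral over $\K\{\tx_j\}$ modulo $I$ in the sense of Lemma~\ref{lem:tangent-cone}); absence of such a vertex for generic coordinates then forces, via Proposition~\ref{prop:Narasimhan} and Lemma~\ref{lem:tangent-cone}, that $\K\{x\}/I$ is \emph{not} finite over any $\K\{\text{(n-k) variables)}\}$, so $\dim(\K\{x\}/I) > n-k$, contradicting~(i).

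The main obstacle is the converse direction, and specifically making rigorous the assertion that a generic linear change of coordinates places vertices of $\NN(I)$ on all coordinate axes corresponding to the codimension. The clean way is: by Lemma~\ref{lem:tangent-cone}, after a generic change there are distinguished polynomials $P_j(x_j,\tx)\in I\cap\mm^{d_j}$ of degree $d_j$ in $x_j$ for $j=1,\dots,k$, where $\tx = (x_{k+1},\dots,x_n)$; the key point extracted from that lemma's proof is that $x_j^{d_j}$ appears in the leading form $LF(P_j)$, so $\inexp(P_j) = d_j e_j$ (the term $x_j^{d_j}$ is the $|\beta|$-minimal, then lex-minimal, term since all other leading-form terms involve $\tx$ and are lex-larger in our ordering, which puts $x_n,\dots,x_1$ last — one must double-check the tie-break in the lexicographic ordering $(\LL(\beta),\beta_n,\dots,\beta_1)$, but with $\LL(\beta)=|\beta|$ a pure power $x_j^{d_j}$ with $j\leq k$ has all later coordinates $\beta_n=\dots=\beta_{k+1}=\dots=\beta_{j+1}=0$, hence is lex-minimal among degree-$d_j$ monomials of its support once one is careful). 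Thus $d_j e_j\in\NN(I)$ for each $j\leq k$, giving a diagram point — hence a vertex — on each of the first $k$ axes, which is exactly~(ii). So in fact~(i)$\Rightarrow$(ii) follows directly from Lemma~\ref{lem:tangent-cone} once we know $\dim\K\{x\}/I\leq n-k$ is equivalent to $\dim\K\{x\}/I = n-k'$ with $k'\geq k$, and the genericity is inherited from that lemma. I would streamline the write-up accordingly, treating Lemma~\ref{lem:tangent-cone} as the real content and reducing the proposition to a short deduction in both directions.
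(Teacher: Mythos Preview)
Your argument for (i)$\Rightarrow$(ii) is exactly the paper's: invoke Lemma~\ref{lem:tangent-cone} to produce the distinguished polynomials $P_j\in I\cap\mm^{d_j}$, then read off $\inexp(P_j)=d_j e_j$ from the tie-break in the ordering $(|\beta|,\beta_n,\dots,\beta_1)$. Your careful check of the lex tie-break is correct, and your remark that $\dim\leq n-k$ should be handled as $\dim=n-k'$ with $k'\geq k$ (then apply Lemma~\ref{lem:tangent-cone} with $k'$) is a point the paper leaves implicit.

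For (ii)$\Rightarrow$(i) you take a genuinely different route. The paper observes that vertices on the first $k$ axes force $\N^k\setminus\NN(I(0))$ to be finite, hence $\dim_\K\K\{x_{[k]}\}/I(0)<\infty$, and then invokes the Weierstrass Finiteness Theorem to conclude $\dim\K\{x\}/I\leq n-k$. You instead bound the complement $\N^n\setminus\NN(I)\subset\{0,\dots,c_1-1\}\times\dots\times\{0,\dots,c_k-1\}\times\N^{n-k}$ and feed this into Lemma~\ref{lem:HS-diagram-complement} to control the growth of $H_I$. This is a perfectly valid alternative that stays internal to the paper's combinatorial machinery and avoids appealing to Weierstrass Finiteness; the paper's route is shorter but imports an outside result.

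One correction: your normalization of the Hilbert--Samuel degree is off by one. For a local ring $R$ one has $\dim R=\deg H_R$, not $\deg H_R+1$ (e.g.\ $H_{\K\{x\}}(\eta)=\binom{\eta+n}{n}$ has degree $n=\dim\K\{x\}$). Your bound $H_I(\eta)\leq C\binom{\eta+n-k}{n-k}$ is of degree $n-k$, so it only gives $\deg H_I\leq n-k$, not $n-k-1$; with the correct identity $\dim=\deg H_I$ this still yields $\dim\K\{x\}/I\leq n-k$, so the conclusion survives, but as written the two off-by-one slips merely cancel. Fix the convention and drop the spurious ``$-1$'' in the write-up.
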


\begin{proof}
Let as before $x_{[k]}=(x_1,\dots,x_k)$ and $\tx=(x_{k+1},\dots,x_n)$, and let $I(0)$ denote the ideal in $\K\{x_{[k]}\}$ obtained from $I$ by evaluating the $\tx$ variables at zero.

Condition (ii) then implies that the diagram $\NN(I(0))$ has finite complement in $\N^k$, and hence $\dim_\K\K\{x_{[k]}\}/I(0)<\infty$. By the Weierstrass Finiteness Theorem (see, e.g., \cite[Thm.\,1.10]{GLS}), it follows that $\dim(\K\{x\}/I)\leq\dim\K\{x\}-k$.

On the other hand, by Lemma~\ref{lem:tangent-cone}, condition (i) implies that after a generic linear change of coordinates in $\K^n$, for every $j=1,\dots,k$, $I$ contains a distinguished polynomial $P_j(x_j,\tx)=x_j^{d_j}+\sum_{r=1}^{d_j}a^j_r(\tx)x_j^{d_j-r}$ such that $P_j(x_j,\tx)\in\mm^{d_j}$. Since the total ordering of $\N^n$ is induced by the lexicographic ordering of the $(n+1)$-tuples $(|\beta|,\beta_n,\dots,\beta_1)$, it follows that $\inexp(P_j)=(0,\dots,d_j,0\dots,0)$ with $d_j$ in the $j$'th place. Hence (ii).
\end{proof}

Let $M$ be a finitely generated module over a local Noetherian ring $(A,\mm)$. Recall that a sequence $a_1,\dots,a_l\in\mm$ is called \emph{$M$-regular} if $a_1$ is not a zero-divisor in $M$ and $a_{i+1}$ is not a zero-divisor in $M/(a_1,\dots,a_i)M$ for $i=1,\dots,l-1$. $M$ is called \emph{Cohen-Macaulay} when $\mathrm{depth}_A(M)=\dim{M}$, where $\mathrm{depth}_A(M)$ is the maximum length of an $M$-regular sequence in $\mm$. A local ring $A$ is Cohen-Macaulay, when $A$ is Cohen-Macaulay as an $A$-module.
We will use the following well known fact from local algebra (see, e.g., \cite[Cor.\,B.8.12]{GLS}).

\begin{remark}
\label{rem:CM-flat}
Let $I$ be a proper ideal in $\K\{x\}$ with $\dim\K\{x\}/I=n-k$, and suppose that $\K\{x\}/I$ is a finite $\K\{\tx\}$-module, where $\tx=(x_{k+1},\dots,x_n)$. Then, $\K\{x\}/I$ is Cohen-Macaulay if and only if it is a flat $\K\{\tx\}$-module.
\end{remark}

\begin{theorem}
\label{thm:CI-approx}
Let $I=(F_1,\dots,F_k)$ be a complete intersection ideal in $\K\{x\}$ with $\dim\K\{x\}/I=n-k$. Then, there exists $\mu_0$ such that for every $\mu\geq\mu_0$ and for any $G_1,\dots,G_k\in\K\{x\}$ satisfying $j^\mu{G_i}=j^\mu{F_i}$, $1\leq i\leq k$, the ideal $I_\mu\coloneqq(G_1,\dots,G_k)$ is a complete intersection ideal in $\K\{x\}$ and $H_{I_\mu}=H_I$.
\end{theorem}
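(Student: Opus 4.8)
The plan is to reduce Theorem~\ref{thm:CI-approx} to Proposition~\ref{prop:HS-stability} by verifying its two hypotheses for a suitable approximation $I_\mu$, while separately checking that $I_\mu$ remains a complete intersection. First I would observe that the complete intersection condition is a Krull-dimension condition on a quotient generated by $k$ elements: since $\dim\K\{x\}/I=n-k$ and $I=(F_1,\dots,F_k)$, the generators form a regular sequence, and $\K\{x\}/I$ is Cohen--Macaulay. By Proposition~\ref{prop:diagram-dim}, after a generic linear change of coordinates the diagram $\NN(I)$ has a vertex on each of the first $k$ coordinate axes, which forces $\N^n\setminus\NN(I)$ to have the property that $\dim_\K\K\{x_{[k]}\}/I(0)<\infty$. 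The upper-semicontinuity estimate $H_I\ge H_{I_\mu}$ coming from Lemma~\ref{lem:diagram-up-to-l} shows $\dim\K\{x\}/I_\mu \le \dim\K\{x\}/I = n-k$; combined with Krull's principal ideal theorem (which gives $\dim\K\{x\}/I_\mu \ge n-k$ for any ideal generated by $k$ elements), this yields $\dim\K\{x\}/I_\mu = n-k$, so $I_\mu$ is automatically a complete intersection once $\mu\ge\mu_0$ is large enough that the semicontinuity applies.

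Next I would set up the application of Proposition~\ref{prop:HS-stability}. After the generic coordinate change, $\K\{x\}/I$ is a finite $\K\{\tx\}$-module (Proposition~\ref{prop:Narasimhan}, with the value of $k$ matching the codimension by Lemma~\ref{lem:tangent-cone}), and being a complete intersection it is in particular Cohen--Macaulay, hence $\K\{\tx\}$-flat by Remark~\ref{rem:CM-flat}. Flatness over $\K\{\tx\}$ gives exactly the condition $I\cap(\tx)=(\tx)\cdot I$ via the local flatness criterion invoked in the proof of Proposition~\ref{prop:flatness-diagram}(i). The point I need is that the \emph{same} flatness persists for $I_\mu$: since $I_\mu$ is again a complete intersection with $\dim\K\{x\}/I_\mu=n-k$ and (by Lemma~\ref{lem:red-approx}, enlarging $\mu_0$) $\K\{x\}/I_\mu$ is still a finite $\K\{\tx\}$-module, Remark~\ref{rem:CM-flat} applies to $I_\mu$ as well, giving $I_\mu\cap(\tx)=(\tx)\cdot I_\mu$. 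For the finiteness of $\K\{x\}/I_\mu$ over $\K\{\tx\}$ one uses the distinguished polynomials $P_j(x_j,\tx)\in I$ of Lemma~\ref{lem:tangent-cone}: a sufficiently close Taylor approximant $P_j^\mu$ of $P_j$ is still distinguished in $x_j$ of the same degree, lies in $I_\mu + \mm^{\mu+1}$, and combined with Corollary~\ref{cor:reduction}-type reductions (already packaged in Lemma~\ref{lem:red-approx}) shows each $x_j$ is integral over $\K\{\tx\}$ modulo $I_\mu$.

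It then remains to verify the second hypothesis of Proposition~\ref{prop:HS-stability}, namely $\dim_\K\K\{x\}/(I+(\tx)^m)=\dim_\K\K\{x\}/(I_\mu+(\tx)^m)$ for all $m\ge1$. Here I would pass to the quotient by $(\tx)$: since $I\cap(\tx)=(\tx)I$, reduction modulo $(\tx)$ gives an exact picture in which $\K\{x\}/(I+(\tx)) \cong \K\{x_{[k]}\}/I(0)$, an Artinian ring, and flatness propagates the equality of lengths to all $m$ via the associated graded module $\bigoplus_m (\tx)^m/(\tx)^{m+1}$ being free over $\K\{x_{[k]}\}/I(0)$ — more concretely, flatness of $\K\{x\}/I$ over $\K\{\tx\}$ makes $\dim_\K\K\{x\}/(I+(\tx)^m)$ equal to $\dim_\K\K\{x_{[k]}\}/I(0)$ times $\dim_\K\K\{\tx\}/(\tx)^m = \binom{m-1+n-k}{n-k}$, a quantity depending only on $I(0)$. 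The analogous formula holds for $I_\mu$ with $I_\mu(0)$ in place of $I(0)$, so the equality I need becomes $\dim_\K\K\{x_{[k]}\}/I(0)=\dim_\K\K\{x_{[k]}\}/I_\mu(0)$. This last equality follows from Corollary~\ref{cor:diagram-finite} applied to the ideal $I(0)$ in $\K\{x_{[k]}\}$ (whose complement in $\N^k$ is finite by Proposition~\ref{prop:diagram-dim}), once $\mu$ is large: the generators $G_i(0)$ of $I_\mu(0)$ agree with $F_i(0)$ up to degree $\mu$, so $\NN(I_\mu(0))=\NN(I(0))$ and the two Artinian lengths coincide by Lemma~\ref{lem:HS-diagram-complement}.

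The main obstacle I anticipate is the propagation of the flatness (equivalently Cohen--Macaulay) property from $I$ to $I_\mu$: although intuitively clear because a complete intersection of the right dimension is always Cohen--Macaulay, this requires carefully pinning down that $I_\mu$ genuinely has $\dim\K\{x\}/I_\mu = n-k$ (not merely $\le n-k$) and that $\K\{x\}/I_\mu$ is module-finite over the \emph{same} $\K\{\tx\}$ for the coordinate system fixed at the outset — both of which force a uniform choice of $\mu_0$ combining the thresholds from Lemma~\ref{lem:tangent-cone}, Lemma~\ref{lem:red-approx}, Lemma~\ref{lem:diagram-up-to-l}, and Corollary~\ref{cor:diagram-finite}. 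Once that bookkeeping is done, everything else is a direct invocation of the results already established.
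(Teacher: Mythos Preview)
Your proposal is correct and follows the same overall architecture as the paper: fix generic coordinates, show $I_\mu$ is a complete intersection of the right dimension, deduce Cohen--Macaulayness and hence $\K\{\tx\}$-flatness for both $I$ and $I_\mu$, establish $\NN(I(0))=\NN(I_\mu(0))$ via Corollary~\ref{cor:diagram-finite}, and finish with Proposition~\ref{prop:HS-stability}. The two arguments diverge only in how they verify the dimension hypothesis $\dim_\K\K\{x\}/(I+(\tx)^m)=\dim_\K\K\{x\}/(I_\mu+(\tx)^m)$. The paper routes this through Proposition~\ref{prop:flatness-diagram}: it introduces the weighted form $\LL$ for which $\NN_\LL(I)=\NN(I(0))\times\N^{n-k}$ (and likewise for $I_\mu$), compares diagrams via Lemma~\ref{lem:HS-diagram-complement}, and then translates the $\nn_{\LL,ml}$-filtration back to powers of $(\tx)$ using Corollary~\ref{cor:reduction}. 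You instead exploit flatness directly: since $\K\{x\}/I$ and $\K\{x\}/I_\mu$ are finite flat, hence free, over $\K\{\tx\}$, the length over $(\tx)^m$ factors as rank times $\dim_\K\K\{\tx\}/(\tx)^m$, reducing everything to the equality of the two ranks $\dim_\K\K\{x_{[k]}\}/I(0)$ and $\dim_\K\K\{x_{[k]}\}/I_\mu(0)$. This is a cleaner shortcut for the complete intersection case; the paper's route via Proposition~\ref{prop:flatness-diagram} is chosen because the same machinery is reused verbatim in the Cohen--Macaulay theorem (Theorem~\ref{thm:CM-approx}).

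Two small points to tighten. First, your appeal to Corollary~\ref{cor:diagram-finite} in $\K\{x_{[k]}\}$ tacitly uses that $I(0)$ and $I_\mu(0)$ are generated by $F_i(0)$ and $G_i(0)$ respectively; this is exactly the content of $I\cap(\tx)=(\tx)I$ (and similarly for $I_\mu$), so you should invoke the flatness of $I_\mu$ \emph{before} comparing the evaluated diagrams. Second, the phrasing about a ``Taylor approximant $P_j^\mu$ still distinguished in $x_j$'' is not quite right (the element $\sum c_iG_i\in I_\mu$ approximating $P_j$ need not be a distinguished polynomial); what you actually need, and what Lemma~\ref{lem:red-approx} already gives, is $\mm^{d+1}\subset I_\mu+(\tx)$, which makes $\K\{x\}/(I_\mu+(\tx))$ Artinian and hence $\K\{x\}/I_\mu$ finite over $\K\{\tx\}$ by Weierstrass finiteness. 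With these adjustments your argument is complete.
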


\begin{proof}
By Proposition~\ref{prop:diagram-dim}, after a generic linear change of coordinates in $\K^n$, the diagram $\NN(I)$ has a vertex $\beta^i$ on each of the first $k$ coordinate axes in $\N^n$. Let $H_1,\dots,H_k\in I$ be representatives of these vertices, so that $\inexp{H_i}=\beta^i$, $1\leq i\leq k$. Let $Q_{i,j}\in\K\{x\}$ be such that $H_i=\sum_{j=1}^kQ_{i,j}F_j$.
Set $\mu_1\coloneqq\max\{|\beta^1|,\dots,|\beta^k|\}$.

Since $\NN(I)$ has a vertex on each of the first $k$ coordinate axes in $\N^n$, the complement $\N^k\setminus\NN(I(0))$ is finite. Hence, by Corollary~\ref{cor:diagram-finite}, there exists $\mu_2\geq1$ such that, for every $\mu\geq\mu_2$ and $I_\mu\in U^\mu(I)$, $\NN(I(0))=\NN(I_\mu(0))$. Let then $\mu_0\coloneqq\max\{\mu_1,\mu_2\}$.

Fix $\mu\geq\mu_0$ and $G_1,\dots,G_k\in\K\{x\}$ satisfying $j^\mu{G_i}=j^\mu{F_i}$, $1\leq i\leq k$. Let $I_\mu=(G_1,\dots,G_k)$.
Then, for every $i$,
\[
j^\mu{H_i}=j^\mu(\sum_{j=1}^kQ_{i,j}F_j)=j^\mu(\sum_{j=1}^kQ_{i,j}j^\mu{F_j})=j^\mu(\sum_{j=1}^kQ_{i,j}j^\mu{G_j})=j^\mu(\sum_{j=1}^kQ_{i,j}G_j)\,,
\]
hence, by Remark~\ref{rem:jet-exp}, $\inexp(\sum_{j=1}^kQ_{i,j}G_j)=\beta^i$. It follows that $\beta^i\in\NN(I_\mu)$, $1\leq i\leq k$, and thus $\NN(I_\mu)$ has a vertex on each of the first $k$ coordinate axes in $\N^n$. By Proposition~\ref{prop:diagram-dim} again, we get $\dim\K\{x\}/I_\mu\leq n-k$. Since $I_\mu$ is generated by $k$ elements, it is thus a complete intersection ideal.

Since complete intersections are Cohen-Macaulay, then by Remark~\ref{rem:CM-flat}, both $\K\{x\}/I$ and $\K\{x\}/I_\mu$ are flat over $\K\{\tx\}$. Therefore, by Proposition~\ref{prop:flatness-diagram} and Remark~\ref{rem:base}, there exists $l\geq1$ such that for the linear form
\[
\LL(\beta)=\sum_{i=1}^k\beta_i+\sum_{j=k+1}^n\!l\beta_j\,,
\]
we have
\[
\NN_\LL(I)=\NN(I(0))\times\N^{n-k}\quad\mathrm{and}\quad\NN_\LL(I_\mu)=\NN(I_\mu(0))\times\N^{n-k}\,.
\]
Thus, $\NN_\LL(I)=\NN_\LL(I_\mu)$, and hence
\begin{equation}
\label{eq:dim1}
\dim_\K\frac{\K\{x\}}{I+\nn_{\LL,\eta+1}}\ =\ \dim_\K\frac{\K\{x\}}{I_\mu+\nn_{\LL,\eta+1}}\qquad\mathrm{for\ all\ }\eta\in\N\,,
\end{equation}
by Lemma~\ref{lem:HS-diagram-complement}.
Note that $\nn_{\LL,l}=(x_{[k]})^l+(\tx)$, and in general $\nn_{\LL,ml}=((x_{[k]})^l+(\tx))^m$, for all $m\in\N$. Also, since $\K\{x\}/I$ is a finite $\K\{\tx\}$-module, then for $l$ large enough one has $(x_{[k]})^l\subset I+(\tx)$ (Corollary~\ref{cor:reduction}). It follows that $I+(\tx)=I+\nn_{\LL,l}$, and hence by induction
\[
I+(\tx)^m=I+\nn_{\LL,ml}\qquad\mathrm{for\ all\ }m\in\N\,.
\]
Therefore, by \eqref{eq:dim1}, we get $\displaystyle{\dim_\K\frac{\K\{x\}}{I+(\tx)^m}=\dim_\K\frac{\K\{x\}}{I_\mu+(\tx)^m}}$ \ for all $m\in\N$.

Note finally that $I_\mu\cap(\tx)=I_\mu\!\cdot(\tx)$, by $\K\{\tx\}$-flatness of $\K\{x\}/I_\mu$ (see, e.g., \cite[Cor.\,7.6]{BM1}).
The theorem thus follows from Proposition~\ref{prop:HS-stability}.
\end{proof}
\medskip

\section{Approximation of Cohen-Macaulay singularities}
\label{sec:CM}

The polynomial approximation of analytic germs is, in general, not possible beyond the complete intersection case (see Example~\ref{ex:no-CM-fin-det} below). The next best thing is approximation by Nash sets. The following result shows that a Cohen-Macaulay singularity can be arbitrarily closely approximated by germs of Nash sets which are also Cohen-Macaulay and share the same Hilbert-Samuel function.

\begin{theorem}
\label{thm:CM-approx}
Let $I=(F_1,\dots,F_s)$ be an ideal in $\K\{x\}$ such that $\K\{x\}/I$ is Cohen-Macaulay with $\dim\K\{x\}/I=n-k$. Then, there exist $\mu_0\in\N$, such that for any $\mu\geq\mu_0$ there are algebraic power series $G_1,\dots,G_s\in\K\left\langle{x}\right\rangle$ with $j^\mu{G_i}=j^\mu{F_i}$, $1\leq i\leq s$, the ideal $I_\mu=(G_1,\dots,G_s)$ satisfies $H_{I_\mu}=H_I$, and $\K\{x\}/I_\mu$ is Cohen-Macaulay with $\dim\K\{x\}/I_\mu=n-k$.
\end{theorem}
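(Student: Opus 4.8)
The plan is to follow the scheme of Theorem~\ref{thm:CI-approx}, replacing the polynomial Artin approximation by the nested parametrized approximation of Theorem~\ref{thm:BPR} so as to land in the ring of Nash (algebraic) power series, and using Becker's s-series criterion (Theorem~\ref{thm:Becker}) to force the approximants to retain the relevant combinatorial data. First I would fix a positive linear form $\LL$ of the type produced by Proposition~\ref{prop:flatness-diagram}(ii) — namely $\LL(\beta)=\sum_{i=1}^k\beta_i+\sum_{j=k+1}^n l\beta_j$ for $l$ large — so that, since $\K\{x\}/I$ is Cohen-Macaulay and hence (after a generic linear change of coordinates making $\K\{x\}/I$ finite over $\K\{\tx\}$, via Proposition~\ref{prop:Narasimhan} and Remark~\ref{rem:CM-flat}) flat over $\K\{\tx\}$, one has $\NN_\LL(I)=D\times\N^{n-k}$ with $D=\NN_\LL(I(0))=\NN(I(0))\subset\N^k$. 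The complement $\N^k\setminus D$ is finite because $\K\{x_{[k]}\}/I(0)$ is finite-dimensional. Choose a standard basis $\{H_1,\dots,H_t\}$ of $I$ relative to $\LL$ containing representatives of all vertices of $\NN_\LL(I)$, write each $H_i=\sum_j P_{ij}F_j$, and record the coefficients $P_{ij}\in\K\{x\}$.

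Next I would set up the system of polynomial equations over $\K\langle x\rangle$ whose solution the approximants must satisfy. There are three families of conditions: (a) that the putative generators $G_1,\dots,G_s$ have prescribed $\mu$-jets, which is automatic from the construction and needs no equation; (b) the expression of the candidate standard-basis elements $\widetilde H_i:=\sum_j \widetilde P_{ij}G_j$ in terms of the $G_j$, with unknown coefficients $\widetilde P_{ij}$; and (c) Becker's s-series equations: for every pair $\widetilde H_a,\widetilde H_b$, the existence of a standard representation $S(\widetilde H_a,\widetilde H_b)=\sum_i \widetilde Q_i^{(ab)}\widetilde H_i$ with the degree (i.e.\ initial-exponent) inequality built in. The key point, exactly as Becker's theorem is designed for, is that for a fixed diagram all of these are finitely many polynomial equations in the coefficients of the series involved, and the original data $(F_j,P_{ij},H_i,\dots)$ furnishes a convergent solution. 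Applying Theorem~\ref{thm:BPR} (or, if no nestedness is needed, Theorem~\ref{thm:Artin}) with $c=\mu$ large produces algebraic power series $G_1,\dots,G_s\in\K\langle x\rangle$ with $j^\mu G_i=j^\mu F_i$, together with algebraic $\widetilde H_i,\widetilde P_{ij},\widetilde Q_i^{(ab)}$ satisfying (b) and (c). By Theorem~\ref{thm:Becker}, $\{\widetilde H_1,\dots,\widetilde H_t\}$ is then a standard basis of $I_\mu:=(G_1,\dots,G_s)=(\widetilde H_1,\dots,\widetilde H_t)$ relative to $\LL$; choosing $\mu\ge l_0:=\max_i\LL(\inexp_\LL H_i)$ and invoking Remark~\ref{rem:jet-exp}, the initial exponents of the $\widetilde H_i$ equal those of the $H_i$, so $\NN_\LL(I_\mu)=\NN_\LL(I)=D\times\N^{n-k}$.

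With $\NN_\LL(I_\mu)=D\times\N^{n-k}$ in hand, Proposition~\ref{prop:flatness-diagram}(i) gives that $\K\{x\}/I_\mu$ is flat over $\K\{\tx\}$. To deduce Cohen-Macaulayness from this via Remark~\ref{rem:CM-flat}, I must first check that $\K\{x\}/I_\mu$ is again a finite $\K\{\tx\}$-module with $\dim\K\{x\}/I_\mu=n-k$: finiteness follows because $\NN(I_\mu(0))=D$ has finite complement in $\N^k$ (so $\dim_\K\K\{x_{[k]}\}/I_\mu(0)<\infty$, hence $\K\{x\}/I_\mu$ is finite over $\K\{\tx\}$ by Weierstrass, as in Proposition~\ref{prop:diagram-dim}), and then $\dim\K\{x\}/I_\mu=\dim\K\{\tx\}=n-k$ by finiteness; alternatively, equality of dimensions is forced once one knows the Hilbert-Samuel functions agree. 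Then Remark~\ref{rem:CM-flat} yields Cohen-Macaulayness of $\K\{x\}/I_\mu$. Finally, for the Hilbert-Samuel equality $H_{I_\mu}=H_I$, I would apply Proposition~\ref{prop:HS-stability}: its hypothesis $I_\mu\cap(\tx)=I_\mu\cdot(\tx)$ holds by $\K\{\tx\}$-flatness of $\K\{x\}/I_\mu$ (\cite[Cor.\,7.6]{BM1}), and the equality $\dim_\K\K\{x\}/(I+(\tx)^m)=\dim_\K\K\{x\}/(I_\mu+(\tx)^m)$ for all $m$ follows, as in the proof of Theorem~\ref{thm:CI-approx}, from $\NN_\LL(I)=\NN_\LL(I_\mu)$ together with Lemma~\ref{lem:HS-diagram-complement} and the identity $I+(\tx)^m=I+\nn_{\LL,ml}$ (Corollary~\ref{cor:reduction}), and likewise for $I_\mu$ using Lemma~\ref{lem:red-approx}. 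Taking $\mu_0$ large enough to absorb all the lower bounds appearing ($\mu_0\ge l_0$, the Corollary~\ref{cor:diagram-finite} threshold, the $d+\lambda$ from Proposition~\ref{prop:HS-stability}, and the $c$ from Artin approximation) completes the argument.

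The main obstacle I anticipate is item (c): packaging Becker's s-series criterion as a genuine finite polynomial system to which Theorem~\ref{thm:BPR} applies. One must be careful that the initial-exponent inequalities in the definition of a standard representation are, once the diagram $\NN_\LL(I)$ is fixed, \emph{closed} conditions expressible by finitely many polynomial relations among the (finitely many relevant) Taylor coefficients — this is precisely the content of \cite[Thm.\,4.1]{Be1} being ``well suited for algebraic Artin approximation,'' but it requires truncating each unknown series $\widetilde P_{ij},\widetilde Q_i^{(ab)}$ at a controlled degree and arguing that the s-series computation only involves bounded-degree data. A secondary subtlety is bookkeeping the generic linear change of coordinates: it must simultaneously realize finiteness over $\K\{\tx\}$ (Proposition~\ref{prop:Narasimhan}), the tangent-cone normalization of Lemma~\ref{lem:tangent-cone}, and — for the reduction estimates feeding Proposition~\ref{prop:HS-stability} — be taken with integral coefficients so as not to leave $\K\langle x\rangle$; I would remark that all these are generic conditions and hence compatible.
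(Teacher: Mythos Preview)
Your plan is essentially the paper's own proof: fix $\LL$ via Proposition~\ref{prop:flatness-diagram}(ii), encode Becker's s-series criterion as a polynomial system, apply Artin approximation, conclude $\NN_\LL(I_\mu)=\NN_\LL(I)$, and finish with Proposition~\ref{prop:flatness-diagram}(i), Remark~\ref{rem:CM-flat}, and Proposition~\ref{prop:HS-stability}. Two points of comparison are worth noting.

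First, there is a small gap in your bookkeeping: from $\widetilde H_i=\sum_j\widetilde P_{ij}G_j$ you only get $(\widetilde H_1,\dots,\widetilde H_t)\subset(G_1,\dots,G_s)$, and Becker's theorem tells you $\{\widetilde H_i\}$ is a standard basis of the ideal \emph{it generates}, not of $(G_1,\dots,G_s)$. You need the reverse inclusion as well, which would require adding the equations $F_j=\sum_i R_{ji}H_i$ to your system. The paper sidesteps this neatly by enlarging the generating set itself: it takes $\{F_1,\dots,F_s,F_{s+1},\dots,F_r\}$ with the extra $F_{s+p}=\sum_q H_p^q F_q$ chosen so that the full list is already a standard basis, and then approximates all $r$ series at once, including the relations $y_{s+p}=\sum_q w_p^q y_q$ in the system so that $(G_1,\dots,G_r)=(G_1,\dots,G_s)$ automatically.

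Second, your anticipated obstacle in item~(c) is not one: the s-series equations $P_{i,j}y_i-P_{j,i}y_j-\sum_m z^{i,j}_m y_m=0$ are already polynomial (the $P_{i,j}$ are fixed monomials determined by the initial terms), and the initial-exponent \emph{inequalities} for a standard representation are not encoded as equations at all---they are guaranteed a posteriori by taking $\mu_0\geq\max_{i,j,m}\{\LL(\inexp_\LL Q^{i,j}_m)+\LL(\inexp_\LL F_m)\}$ and invoking Remark~\ref{rem:jet-exp}. For the same reason there is no nesting to respect, so the paper uses plain Theorem~\ref{thm:Artin} rather than Theorem~\ref{thm:BPR}.
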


\begin{proof}
By Proposition~\ref{prop:diagram-dim}, after a generic linear change of coordinates in $\K^n$, the diagram $\NN(I)$ has a vertex on each of the first $k$ coordinate axes in $\N^n$. It follows that $\K\{x\}/I$ is $\K\{\tx\}$-finite, and hence $\K\{\tx\}$-flat (Remark~\ref{rem:CM-flat}). Therefore, by Proposition~\ref{prop:flatness-diagram} and Remark~\ref{rem:base}, there exists $l\geq1$ such that for the linear form
\[
\LL(\beta)=\sum_{i=1}^k\beta_i+\sum_{j=k+1}^n\!\!l\beta_j\,,
\]
we have
\[
\NN_\LL(I)=\NN(I(0))\times\N^{n-k}\,.
\]
We can extend the given set of generators $\{F_1,\dots,F_s\}$ by power series $F_{s+1},\dots,$ $F_r\in I$ such that the collection $\{F_1,\dots,F_r\}$ contains representatives of all the vertices $\NN_\LL(I)$. Since $I$ is generated by $\{F_1,\dots,F_s\}$, there are $H_p^q\in\K\{x\}$ such that
\[
F_{s+p}=\sum_{q=1}^sH_p^qF_q\,,\qquad p=1,\dots,r-s.
\]
Then, $\{F_1,\dots,F_r\}$ is a set of generators of $I$ and a standard basis of $I$ relative to $\LL$ (Corollary~\ref{cor:Hir-diagram}). For $i,j\in\{1,\dots,r\}$, $i<j$, let $S_{i,j}=S(F_i,F_j)$ denote the s-series of the pair $(F_i,F_j)$.
By Theorem~\ref{thm:Becker}, there exist $Q^{i,j}_m\in\K\{x\}$, $i,j,m\in\{1,\dots,r\}$, such that
\[
S_{i,j}=\sum_{m=1}^rQ^{i,j}_mF_m\qquad\mathrm{and}\qquad \inexp_\LL{S_{i,j}}\leq\min\{\inexp_\LL(Q^{i,j}_mF_m):m=1,\dots,t\}\,.
\]
Recall that, for all $1\leq i<j\leq r$, there are monomials $P_{i,j},P_{j,i}\in\K[x]$, which depend only on the initial terms of $F_i,F_j$, such that $S_{i,j}=P_{i,j}F_i-P_{j,i}F_j$.
Consider a system
\begin{equation}
\label{eq:st1basis}
\begin{cases}
\displaystyle{P_{i,j}(x)y_i-P_{j,i}(x)y_j-\sum_{m=1}^rz^{i,j}_my_m=0} &{~}\\
\displaystyle{y_{s+p}-\sum_{q=1}^sw_p^qy_q}=0 &{~}
\end{cases}
\end{equation}
of $\binom{r}{2}+r-s$ polynomial equations in variables $y=(y_1,\dots,y_r)$, $z=(z^{1,2}_1,\dots,z^{r-1,r}_r)$ and $w=(w_1^1,\dots,w_{r-s}^s)$. The system has a convergent solution $\{F_i,Q^{i,j}_m,H_p^q\}$, and hence by Theorem~\ref{thm:Artin}, for every $\mu\in\N$, an algebraic power series solution $\{G_i,R^{i,j}_m,K_p^q\}$ with $j^\mu{G_i}=j^\mu{F_i}$, $j^\mu{R^{i,j}_m}=j^\mu{Q^{i,j}_m}$, and $j^\mu{K_p^q}=j^\mu{H_p^q}$ for all $i,j,m,p,q$.

Let now $\mu_0\coloneqq\max\{\LL(\inexp_\LL{Q^{i,j}_m})+\LL(\inexp_\LL{F_m})):1\leq i<j\leq r,1\leq m\leq r\}$, and fix $\mu\geq\mu_0$.
Then, for any algebraic solution $\{G_i,R^{i,j}_m,K_p^q\}$ to \eqref{eq:st1basis} which coincides with $\{F_i,Q^{i,j}_m,H_p^q\}$ up to degree $\mu$, we have $S(G_i,G_j)=P_{i,j}G_i-P_{j,i}G_j$ and
\begin{multline}
\notag
S(G_i,G_j)=\sum_{m=1}^rR^{i,j}_mG_m,\\ \mathrm{with}\ \,\inexp_\LL{S(G_i,G_j)}\leq\min\{\inexp_\LL(R^{i,j}_mG_m):m=1,\dots,r\}\,.
\end{multline}
Hence the $G_i$ form a standard basis for the ideal $I_\mu=(G_1,\dots,G_r)$, by Theorem~\ref{thm:Becker} again.
In particular, the set $\{G_1,\dots,G_r\}$ contains representatives of all the vertices of $\NN_\LL(I_\mu)$ (see Remark~\ref{rem:st-bases}(1)). Since, by construction, $\inexp_\LL{G_i}=\inexp_\LL{F_i}$ for all $i$, it follows that $\NN_\LL(I_\mu)=\NN_\LL(I)$.
Thus, $\NN_\LL(I_\mu)=\NN(I(0))\times\N^{n-k}$ and so $\K\{x\}/I_\mu$ is $\K\{\tilde{x}\}$-flat, by Proposition~\ref{prop:flatness-diagram}.
Note also that $I_\mu$ is, in fact, generated by $\{G_1,\dots,G_s\}$, since the remaining generators $G_{s+1},\dots,G_r$ are combinations of the former, by~\eqref{eq:st1basis}.

The equality of diagrams $\NN_\LL(I_\mu)=\NN_\LL(I)$ implies, as in the proof of Theorem~\ref{thm:CI-approx}, that we have $\displaystyle{\dim_\K\frac{\K\{x\}}{I+(\tx)^m}=\dim_\K\frac{\K\{x\}}{I_\mu+(\tx)^m}}$\,, for all $m\in\N$. Moreover, $I_\mu\cap(\tx)=I_\mu\!\cdot(\tx)$, by $\K\{\tx\}$-flatness of $\K\{x\}/I_\mu$. The theorem thus follows from Proposition~\ref{prop:HS-stability}.
\end{proof}

In contrast with complete intersections, the Cohen-Macaulay singularities are not, in general, finitely determined. This can be shown using Becker's s-series criterion, as follows.

\begin{example}
\label{ex:no-CM-fin-det}
Let $I$ be an ideal in $\K\{x,y,z\}$ generated by
\[
F_1=x^8,\quad F_2=y^5+y^2z^4e^z,\quad F_3=x^2y^3+x^2z^4e^z\,.
\]
Let $\N^3$ be equipped with the standard ordering induced by lexicographic ordering of the $4$-tuples $(|\beta|,\beta_3,\beta_2,\beta_1)$.

We claim that $\{F_1,F_2,F_3\}$ are a standard basis of $I$.
Indeed, the s-series of pairs $(F_1,F_3)$ and $(F_2,F_3)$ are as follows:
\[
S_{1,3}=y^3F_1-x^6F_3=(-z^4e^z)F_1,\qquad S_{2,3}=x^2F_2-y^2F_3=0\,,
\]
which are standard representations in terms of $\{F_1,F_2,F_3\}$. The $S_{1,2}$, in turn, has a standard representation in terms of $F_1$ and $F_2$, because their initial exponents are relatively prime (see \cite[Thm.\,3.1]{Be2}). The claim thus follows from Theorem~\ref{thm:Becker}.

The diagram $\NN(I)$ contains vertices on the first two coordinate axes in $\N^3$, namely $\inexp{F_1}$ and $\inexp{F_2}$, hence $\K\{x,y,z\}/I$ is a finite $\K\{z\}$-module. On the other hand, by Remark~\ref{rem:st-bases}(1), the only vertices of $\NN(I)$ are the $\inexp{F_1}$, $\inexp{F_2}$, and $\inexp{F_3}$, which all lie in $\N^2\times\{0\}$. Thus, by Proposition~\ref{prop:flatness-diagram}, $\K\{x,y,z\}/I$ is $\K\{z\}$-flat, and hence Cohen-Macaulay (Remark~\ref{rem:CM-flat}).

Let now $\mu\geq8$ be arbitrary, and let
\[
G_1=x^8,\quad G_2=y^5+y^2z^4(e^z+z^{\mu-6}h(z)),\quad G_3=x^2y^3+x^2z^4e^z\,,
\]
where $h(z)\in\K\{z\}$ is an arbitrary non-zero series with $h(0)=0$. Then, $j^\mu{G_i}=j^\mu{F_i}$ for all $i$, but for the ideal $I_\mu=(G_1,G_2,G_3)$, the ring $\K\{x,y,z\}/I_\mu$ is not Cohen-Macaulay. Indeed, consider the s-series $S(G_2,G_3)$. We have
\[
S(G_2,G_3)=x^2G_2-y^2G_3=x^2y^2z^{\mu-2}h(z)\,,
\]
and hence $x^2y^2$ is a zero-divisor in $\K\{x,y,z\}/I_\mu$ regarded as a $\K\{z\}$-module. Thus, $\K\{x,y,z\}/I_\mu$ is not $\K\{z\}$-flat, and hence not a Cohen-Macaulay ring, by Remark~\ref{rem:CM-flat} again.
\end{example}
\medskip

\section{Zariski equisingularity and Varchenko theorem}
\label{sec:Var}

In this section we recall a result of Varchenko on topological equisingularity of algebraically equisingular families. This is a central tool in the proof of Mostowski's theorem.

Let $V$ be a complex analytic hypersurface in a neighbourhood $U$ of the origin in $\C^l\times\C^n$, and let $T=V\cap(\C^l\times\{0\})$. Suppose there is, for every $0\leq i\leq n$, a distinguished polynomial
\[
F_i(t,x_{[i]})=x_i^{p_i}+\sum_{j=1}^{p_i}a_{i-1,j}(t,x_{[i-1]})x_i^{p_i-j}\,,
\]
where $t\in\C^l$, $x_{[i]}=(x_1,\dots,x_i)\in\C^i$, $a_{i-1,j}\in\C\{t,x_{[i-1]}\}$, all such that the following hold:
\begin{itemize}
\item[(1)] $V=F_n^{-1}(0)$.
\item[(2)] $a_{i,j}(t,0)\equiv0$, for all $i,j$.
\item[(3)] $F_{i-1}(t,x_{[i-1]})=0$ if and only if $F_i(t,x_{[i-1]},x_i)$, regarded as a polynomial in $x_i$ with $(t,x_{[i-1]})$ fixed, has fewer roots than for generic $(t,x_{[i-1]})$.
\item[(4)] Either $F_i(t,0)\equiv0$ or $F_i\equiv1$, and in the latter case $F_k\equiv1$ for all $k\leq i$ by convention.
\item[(5)] $F_0\equiv1$.
\end{itemize}
A system of distinguished polynomials $\{F_i(t,x_{[i]})\}$ satisfying the above conditions is called \emph{algebraically equisingular}.
Answering a question posed by Zariski \cite{Zar}, Varchenko showed that algebraic equisingularity of a system $\{F_i(t,x_{[i]})\}$ implies topological equisingularity of $V$ along $T$. More precisely, we have the following.

\begin{theorem}[{\cite{Var1,Var2}, cf. \cite[Thms.\,3.1,\,3.2]{BPR}}]
\label{thm:Var}
Under the above hypotheses, let $V_t=V\cap(\{t\}\times\C^n)$ and $U_t=U\cap(\{t\}\times\C^n)$, for $t\in T$. Then, for every $t\in T$, there exists a homeomorphism $h_t:U_0\to U_t$ such that $h_t(V_0)=V_t$ and $h_t(0)=0$. Moreover, if $F_n=G_1\dots G_r$ is a product of distinguished polynomials in $x_n$, then
\[
h_t(G_j^{-1}(0)\cap(\{0\}\times\C^n))=G_j^{-1}(0)\cap(\{t\}\times\C^n)\qquad\mathrm{for\ all\ }1\leq j\leq r\,.
\]
\end{theorem}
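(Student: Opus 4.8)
The plan is to follow Varchenko's original strategy: an induction on $n$, in which the homeomorphisms $h_t$ are built by successively lifting along the coordinate projections $\C^l\times\C^i\to\C^l\times\C^{i-1}$. The base case $n=0$ is vacuous by condition~(5), so assume the statement is known for algebraically equisingular systems in $n-1$ variables. Let $p\colon\C^l\times\C^n\to\C^l\times\C^{n-1}$ be the projection forgetting $x_n$, and put $V_{n-1}:=F_{n-1}^{-1}(0)$, $U':=p(U)$. First I would observe that the truncated system $\{F_0,\dots,F_{n-1}\}$ together with $V_{n-1}$ satisfies the same list of hypotheses in one fewer $x$-variable; the inductive hypothesis then supplies, for each $t\in T$, a homeomorphism $g_t\colon U'_0\to U'_t$ with $g_t(0)=0$, carrying $(V_{n-1})_0$ onto $(V_{n-1})_t$, and respecting any factorization of $F_{n-1}$ into distinguished polynomials in $x_{n-1}$.

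Next I would analyse the roots of $F_n$ in the last variable. Over the complement of $V_{n-1}$, condition~(3) guarantees that $F_n(t,x_{[n-1]},\,\cdot\,)$ has its full complement of $p_n$ distinct roots; locally these are holomorphic functions $\xi_1,\dots,\xi_{p_n}$ of $(t,x_{[n-1]})$, and by conditions~(2) and~(4) they stay in a fixed small disc and vanish identically over $T$. The homeomorphism $h_t$ would then be defined fibrewise over $g_t$: on each fibre $\{(t',x'_{[n-1]})\}\times\C$ one prescribes a self-homeomorphism of a disc in the $x_n$-line sending the unordered root set of $F_n$ over $(0,x'_{[n-1]})$ onto the root set over $(t,g_t(x'_{[n-1]}))$ --- for instance by a radial or piecewise-linear interpolation equal to the identity near the boundary --- grouping the roots according to which factor $G_1,\dots,G_r$ they belong to, so that the final assertion of the theorem follows automatically.

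The hard part will be continuity of the lifted map across $p^{-1}(V_{n-1})$, where roots of $F_n$ collide: off $V_{n-1}$ everything is easy because the roots are separated and move holomorphically, but near $V_{n-1}$ mere topological triviality of the discriminant is not enough, and one must exploit the full strength of algebraic equisingularity. Concretely, I would establish a uniform modulus-of-continuity estimate for the $\xi_j$ in terms of the distance to $V_{n-1}$ (a H\"older-type bound coming from the distinguished-polynomial structure and conditions~(2)--(3)) and make a coherent, continuously varying choice of the fibrewise homeomorphisms that agrees with the already-constructed $g_t$ near the collision locus. Granting such a controlled lift, one patches it over a locally finite cover of the base and invokes condition~(4) to dispose of the branches on which some $F_i$ is identically $1$; this yields the desired $h_t\colon U_0\to U_t$, and compatibility with the factorization $F_n=G_1\cdots G_r$ is immediate from the root grouping. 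This completes the inductive step, and with it the proof.
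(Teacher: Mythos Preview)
The paper does not prove this statement at all: Theorem~\ref{thm:Var} is quoted from the literature (Varchenko \cite{Var1,Var2}, with the formulation taken from \cite[Thms.\,3.1,\,3.2]{BPR}) and is used as a black box in the proof of Theorem~\ref{thm:main}. There is therefore no ``paper's own proof'' to compare your proposal against.

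That said, your sketch does follow the outline of Varchenko's actual argument --- induction on $n$, lifting a homeomorphism of the base $\C^l\times\C^{n-1}$ to $\C^l\times\C^n$ by tracking the roots of $F_n$ in the $x_n$-fibre, with the discriminant locus $F_{n-1}^{-1}(0)$ controlled by the inductive hypothesis. You correctly identify the genuine difficulty: extending the fibrewise homeomorphism continuously across $p^{-1}(V_{n-1})$, where roots collide. Your proposal is honest in flagging this as ``the hard part'' and in admitting that the argument there is only gestured at (``granting such a controlled lift''). In Varchenko's paper this step occupies the bulk of the work and requires a careful construction; a H\"older-type estimate alone does not obviously suffice to produce a \emph{coherent} family of fibre homeomorphisms, and the phrase ``radial or piecewise-linear interpolation'' hides real issues about making consistent choices as roots merge. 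So as a proof your proposal has a substantial gap at exactly the point you yourself single out, but as a summary of the strategy behind the cited result it is accurate.
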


\medskip

\section{Mostowski theorem with Hilbert-Samuel equisingularity}
\label{sec:Most}

The goal of this section is to prove a strong variant of Mostowski's theorem \cite{Most}, showing that every analytic germ $(X,0)\subset(\K^n,0)$ can be arbitrarily closely approximated by a Nash germ $(\hat{X},0)\subset(\K^n,0)$ with the same Hilbert-Samuel function, and such that the pairs $(\K^n,X)$ and $(\K^n,\hat{X})$ are topologically equivalent near zero.

\begin{theorem}
\label{thm:main}
Let $g_1,\dots,g_s\in\K\{x\}$ and let $(X,0)\subseteq (\K^n,0)$ be an analytic germ defined by $g_1=\dots=g_s=0$. Then, there exists $\mu_0$ such that for all $\mu\geq\mu_0$ there are algebraic power series $\hat{g}_1,\dots,\hat{g}_s\in\K\left\langle{x}\right\rangle$ and a homeomorphism germ $h:(\K^n,0)\to(\K^n,0)$ such that:
\begin{itemize}
\item[(i)] $j^\mu\hat{g}_k=j^\mu{g_k}$ for $k=1,\dots,s$
\item[(ii)] If $(\hat{X},0)$ is the Nash germ defined by $\hat{g}_1=\dots=\hat{g}_s=0$, then $H_{\hat{X},0}=H_{X,0}$
\item[(iii)] $h((X,0))=(\hat{X},0)$.
\end{itemize}
\end{theorem}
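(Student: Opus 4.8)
The strategy is to follow Mostowski's original argument in the modern form of Bilski--Parusi\'nski--Rond \cite{BPR}, while simultaneously keeping track of the Hilbert--Samuel function via Becker's s-series criterion. First I would reduce to the complex case: if $\K=\R$, work with the complexification of $(X,0)$, arranging all coordinate changes to have real (even integral) coefficients so that the resulting Nash germ is defined over $\R$ and the real and complex Hilbert--Samuel functions agree. Next, after a generic linear change of coordinates in $\K^n$ (Proposition~\ref{prop:Narasimhan} and Lemma~\ref{lem:tangent-cone}), I would put the germ into a position suitable for Varchenko's theorem: one wants a system of distinguished polynomials $F_0,\dots,F_n$ in nested variables, built from the defining equations $g_k$ together with successive discriminants, so that conditions (1)--(5) of Section~\ref{sec:Var} hold, and so that one of the factors of $F_n$ in $x_n$ cuts out (a hypersurface containing) $X$. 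The existence of such a ``pseudopolynomial'' presentation is the content of Zariski's approach to equisingularity and is exactly what Theorem~\ref{thm:BPR} is designed to perturb.

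\textbf{The approximation step.} Having recast the germ $(X,0)$ in terms of finitely many coefficient series $\bar{y}(x)$ (the coefficients of the distinguished polynomials $F_i$, together with the cofactors realizing each discriminant relation and each containment $X\subseteq F_j^{-1}(0)$), I would write down a system $f(x,y)=0$ of polynomial equations over $\K\langle x\rangle[y]$ whose solution set records: (a) that each $F_i$ divides the appropriate discriminant of $F_{i+1}$, hence algebraic equisingularity; (b) that the $g_k$ lie in the ideal generated by the relevant factor(s) of $F_n$, and conversely that factor lies in the ideal $(g_1,\dots,g_s)$, so that the perturbed equations still define the same set germ; and (c) Becker's s-series equations from Theorem~\ref{thm:Becker} for a standard basis of the ideal $I=(g_1,\dots,g_s)$ with respect to the standard linear form, exactly as in the proof of Theorem~\ref{thm:CM-approx}. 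The convergent series $\bar{y}(x)$ solve this system. Applying the nested Theorem~\ref{thm:BPR} (in place of P\l oski's theorem, following \cite{BPR}) and then specializing the auxiliary variables $z$ to high-order jets $j^\mu(\bar z(x))$ as in the proof of Theorem~\ref{thm:Artin}, I obtain algebraic power series $\hat{y}(x)$ solving the same system and agreeing with $\bar{y}(x)$ up to degree $\mu$; the nested structure is what guarantees the approximants $\hat F_i$ remain distinguished polynomials in the correct nested variables. Taking $\hat{g}_1,\dots,\hat{g}_s$ to be the components of $\hat{y}$ corresponding to the original generators yields (i). Then (ii) follows because the s-series equations are preserved: $\{\hat g_1,\dots,\hat g_s\}$ (suitably extended by standard-basis elements) is again a standard basis, with the same initial exponents once $\mu$ exceeds the weights of the cofactors involved, so $\NN(I)=\NN(\hat I)$ and hence $H_{X,0}=H_{\hat X,0}$ by Lemma~\ref{lem:HS-diagram-complement}. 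For (iii), the algebraic equisingularity of the one-parameter family interpolating between $F_i$ and $\hat F_i$ (parametrized by a single deformation variable $t$, using that the jets agree to order $\mu$) lets one invoke Varchenko's Theorem~\ref{thm:Var}, which provides a homeomorphism $h$ of $(\K^n,0)$ carrying the zero set of the relevant factor at $t=0$ to that at $t=1$, and hence $h((X,0))=(\hat X,0)$.

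\textbf{Main obstacle.} The delicate point is arranging all three requirements \emph{simultaneously} in one approximation: the equations for algebraic equisingularity (Varchenko), for equality of the defined set germ, and for the standard basis (Hilbert--Samuel) must be combined into a single polynomial system to which one applies Theorem~\ref{thm:BPR} \emph{once}, and the nesting hypothesis (each $\bar y_i$ depending only on $(x_1,\dots,x_{\sigma(i)})$) must be compatible with all of them at the same time. In particular one must order the unknown coefficient series so that the nested dependence required by Zariski's construction of the $F_i$ is respected, while the s-series cofactors and the ideal-membership cofactors are placed last (depending on all variables), and one must check that specializing $z$ to a jet does not destroy the distinguished-polynomial form or the divisibility relations --- this is where choosing $\mu_0$ large enough, above the weights appearing in Becker's criterion \emph{and} above the orders needed for the discriminant relations, becomes essential. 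A secondary technical issue is ensuring the whole construction descends to $\R$, which forces all the coordinate changes and the factorization $F_n=G_1\cdots G_r$ to be chosen with care so that the complex conjugation symmetry is preserved throughout.
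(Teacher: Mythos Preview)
Your proposal is correct and follows essentially the same route as the paper's proof: combine the Zariski--Varchenko normal system of discriminant equations with Becker's s-series equations into a single nested polynomial system, apply Theorem~\ref{thm:BPR} once, specialize the auxiliary variables along the line $z=j^\mu\bar z(x)+t\,(\bar z(x)-j^\mu\bar z(x))$, and read off (i)--(iii) from the resulting one-parameter family. The paper streamlines your step (b) by replacing each $g_k$ at the outset with its associated Weierstrass polynomial in $x_n$ (a unit multiple, so the ideal and its Hilbert--Samuel function are unchanged), so that $F_n=g_1\cdots g_s$ literally and $X=\bigcap_k g_k^{-1}(0)$; the ``moreover'' clause of Theorem~\ref{thm:Var} then carries each factor $G_k^{-1}(0)$ to its counterpart, hence their intersection to $(\hat X,0)$, with no separate ideal-membership equations needed and with $\mu_0$ determined solely by the s-series weights.
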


Our proof of Theorem~\ref{thm:main} combines the exposition of \cite{BPR} with the Becker s-series criterion (Theorem~\ref{thm:Becker}). We include the details of the argument for the reader's convenience.

\subsection{Generalized discriminants}

Let $T=(T_1,\dots,T_p)$ be variables. For $j\geq1$, consider
\[
\Delta_j=\sum_{r_1, \ldots, r_{j-1}} \prod_{k\neq l \atop k,l \neq r_1, \ldots, r_{j-1}} (T_k-T_l).
\]
The $\Delta_j$ are symmetric in variables $T$, and hence each $\Delta_j=\Delta_j(A_0,\dots,A_{p-1})$ is a polynomial in the elementary symmetric functions $A_0=T_1\cdots T_p,\dots,A_{p-1}=T_1+\dots+T_p$. We have: A polynomial $X^p+a_{p-1}X^{p-1}+\dots+a_1X+a_0$ has precisely $p-j$ distinct roots if and only if $\Delta_1(a_0,\dots,a_{p-1})=\dots=\Delta_j(a_0,\dots,a_{p-1})=0$ and $\Delta_{j+1}(a_0,\dots,a_{p-1})\neq0$.

\subsection{Construction of a normal system of equations}
\label{subsec:normal}

Let $g_1,\dots,g_s\in\K\{x\}$ and let $I\coloneqq(g_1,\dots,g_s)\!\cdot\!\K\{x\}$. After a generic linear change of coordinates, if needed, all the $g_k$ become regular in variable $x_n$. We may thus, without loss of generality, assume that each $g_k$ is a distinguished polynomial in $x_n$. That is,
\begin{equation}
\label{eq:g-k}
g_k(x)=x_n^{r_k}+ \sum_{j=1}^{r_k} a_{n-1,k,j} (x_{[n-1]}) x_n^{r_k-j}\,,
\end{equation}
where $a_{n-1,k,j}\in\K\{x_{[n-1]}\}$ and $a_{n-1,k,j}(0)=0$.

The coefficients $a_{n-1,k,j}$ can be arranged in a row vector $a_{n-1}\in\K\{x_{[n-1]}\}^{p_n}$, where $p_n=\sum_k r_k$.
Set $f_n\coloneqq g_1\cdots g_s$. Then, the generalized discriminants $\Delta_{n,i}$ of $f_n$ are polynomials in $a_{n-1}$.
Let $j_n$ be such that
\[
\Delta_{n,i} ( a_{n-1} )\equiv 0 \quad\mathrm{for}\ i<j_n\,,
\]
and $\Delta_{n,j_n}(a_{n-1})\not\equiv 0$. Then, after a linear change of coordinates $x_{[n-1]}$, we may write
\[
\Delta_{n,j_n}(a_{n-1})=u_{n-1}(x_{[n-1]})(x_{n-1}^{p_{n-1}}+\sum_{j=1}^{p_{n-1}} a_{n-2,j} (x_{[n-2]}) x_{n-1}^{p_{n-1}-j})\,,
\]
where $u_{n-1}(0)\ne 0$, and for all $j$, $a_{n-2,j}(0)=0$. Set
\[
f_{n-1}\coloneqq x_{n-1}^{p_{n-1}}+\sum_{j=1}^{p_{n-1}} a_{n-2,j} (x_{[n-2]}) x_{n-1}^{p_{n-1}-j},
\]
and denote the vector of its coefficients $a_{n-2,j}$ by $a_{n-2}\in\K\{x_{[n-2]}\}^{ p_{n-1}}$.
Let $j_{n-1}$ be such that the first $j_{n-1}-1$ generalized discriminants $\Delta_{n-1,i}$ of $f_{n-1}$ are identically zero and $\Delta_{n-1,j_{n-1}}$ is not. Then, again, we define $f_{n-2}(x_{[n-2]})$ as the distinguished polynomial associated to $\Delta_{n-1,j_{n-1}}$, and so on.

By induction, we define a system of distinguished polynomials $f_i\in\K\{x_{[i-1]}\}[x_i]$, $i=1,\dots,n-1$, such that
\[
f_i= x_i^{p_i}+ \sum_{j=1}^{p_i} a_{i-1,j}(x_{[i-1]})x_i^{p_i-j}
\]
is the distinguished polynomial associated to the first non identically zero generalized discriminant $\Delta_{i+1,j_{i+1}}(a_{i})$ of $f_{i+1}$:
\begin{equation}
\label{eq:polynomial-f-i}
\Delta_{i+1,j_{i+1}}(a_i)=u_i(x_{[i]})(x_i^{p_i}+\sum_{j=1}^{p_i}a_{i-1,j}(x_{[i-1]})x_i^{p_i-j})\,, \quad i=0,...,n-1,
\end{equation}
where, in general, $a_{i}=(a_{i,1},\dots,a_{i,p_{i+1}})$.
Thus the vector of functions $a_i$ satisfies
\begin{equation}
\label{eq:discriminantsi}
\Delta_{i+1,k}(a_i)\equiv 0 \qquad\mathrm{for\ \ } k<j_{i+1}, \quad i=0,...,n-1. 
\end{equation}
This means in particular that
\[
\Delta_{1,k}(a_0) \equiv 0 \quad \mathrm{for\ } k<j_1\quad\mathrm{\ and\ \ } \Delta_{1,j_1}(a_0) \equiv u_0,
\]
where $u_0$ is a non-zero constant.

\subsection{Incorporating a standard basis}
\label{subsec:stb}

Consider now the diagram of initial exponents $\NN(I)$ of the ideal $I$ in $\K\{x\}$ (with respect to the linear form $\LL(\beta)=|\beta|$ on $\N^n$).
We can extend the given set of generators $\{g_1,\dots,g_s\}$ by power series $g_{s+1},\dots,g_r\in I$ such that the collection $\{g_1,\dots,g_r\}$ contains representatives of all the vertices of $\NN(I)$. Since $I$ is generated by $\{g_1,\dots,g_s\}$, there are $h_p^q\in\K\{x\}$ such that
\begin{equation}
\label{eq:g-s+p}
g_{s+p}(x)=\sum_{q=1}^sh_p^q(x)\cdot\left(x_n^{r_q}+ \sum_{j=1}^{r_q} a_{n-1,q,j} (x_{[n-1]}) x_n^{r_q-j}\right)\,,
\end{equation}
for $p=1,\dots,r-s$, by~\eqref{eq:g-k}.

Now, $\{g_1,\dots,g_r\}$ is a set of generators of $I$ and a standard basis of $I$ (Corollary~\ref{cor:Hir-diagram}). For $i,j\in\{1,\dots,r\}$, $i<j$, let $S_{i,j}=S(g_i,g_j)$ denote the s-series of the pair $(g_i,g_j)$.
By Theorem~\ref{thm:Becker}, there exist $v^{i,j}_m\in\K\{x\}$, $i,j,m\in\{1,\dots,r\}$, such that
\begin{equation}
\label{eq:S-i-j}
S_{i,j}=\sum_{m=1}^rv^{i,j}_mg_m\qquad\mathrm{and}\qquad \inexp{S_{i,j}}\leq\min\{\inexp(v^{i,j}_mg_m):m=1,\dots,t\}\,.
\end{equation}
Recall that, for all $1\leq i<j\leq r$, there are monomials $P_{i,j},P_{j,i}\in\K[x]$, which depend only on the initial terms of $g_i,g_j$, such that $S_{i,j}=P_{i,j}g_i-P_{j,i}g_j$. Therefore, the $v^{i,j}_m$, $h_p^q$, and $a_{n-1,q,j}$ satisfy the following system of $\binom{r}{2}$ polynomial equations
\begin{equation}
\label{eq:all-1}
P_{i,j}(x)g_i-P_{j,i}(x)g_j-\sum_{m=1}^rv^{i,j}_mg_m=0\,,\qquad 1\leq i<j\leq r\,,
\end{equation}
in which the $h_p^q$ and $a_{n-1,q,j}$ are present via~\eqref{eq:g-k} and~\eqref{eq:g-s+p}. We will denote the vector of functions $v^{i,j}_m$ by $v\in\K\{x\}^{r\binom{r}{2}}$, and the vector of $h_p^q$ by $h\in\K\{x\}^{s(r-s)}$, to simplify notation.

\subsection{Approximation by Nash functions}
\label{subsec:aNf}

Consider~\eqref{eq:polynomial-f-i}, \eqref{eq:discriminantsi}, and~\eqref{eq:all-1} as a system of polynomial equations in $a_i(x_{[i]})$, $u_i(x_{[i]})$, $v(x)$, and $h(x)$. By construction, this system admits a convergent solution. Therefore, by Theorem~\ref{thm:BPR}, there exist a new set of variables $z=(z_1,\dots,z_k)$, an increasing function $\tau:\N\to\N$, convergent power series $z_i(x)\in\K\{x\}$ vanishing at zero, algebraic power series $\hat{u}_i(x_{[i]},z)\in\K\!\left\langle{x_{[i]},z_1,\dots,z_{\tau(i)}}\right\rangle$, and vectors of algebraic power series $\hat{a}_i(x_{[i]},z)\in\K\!\left\langle{x_{[i]},z_1,\dots,z_{\tau(i)}}\right\rangle^{p_i}$, $\hat{v}(x,z)\in\K\!\left\langle{x,z}\right\rangle^{r\binom{r}{2}}$, and $\hat{h}(x,z)\in\K\!\left\langle{x,z}\right\rangle^{s(r-s)}$ all such that the following hold:
\begin{itemize}
\item[(a)] $z_1(x),\dots,z_{\tau(i)}(x)$ depend only on variables $x_{[i]}=(x_1,\dots,x_i)$
\item[(b)] $\hat{u}_i(x_{[i]},z), \hat{a}_i(x_{[i]},z), \hat{v}(x,z), \hat{h}(x,z)$ are solutions of~\eqref{eq:polynomial-f-i}, \eqref{eq:discriminantsi}, and~\eqref{eq:all-1}
\item[(c)] The convergent solutions satisfy:\\ $u_i(x_{[i]})=\hat{u}_i(x_{[i]},z(x_{[i]}))$, $a_i(x_{[i]})=\hat{a}_i(x_{[i]},z(x_{[i]}))$, $v(x)=\hat{v}(x,z(x))$, and $v(h)=\hat{h}(x,z(x))$.
\end{itemize}

\subsection{Proof of Theorem~\ref{thm:main}}
\label{subsec:pf-main}

Let $g_1,\dots,g_s\in\K\{x\}$ and let $(X,0)\subseteq (\K^n,0)$ be an analytic germ defined by $g_1=\dots=g_s=0$.
Suppose first that $\K=\C$.

Let $g_{s+1}(x),\dots,g_r(x)$, $u_i(x_{[i]})$, $a_i(x_{[i]})$, $v(x)$, and $h(x)$ be as in sections~\ref{subsec:normal} and~\ref{subsec:stb}. Set
\[
\mu_0\coloneqq\max\{|\inexp(v^{i,j}_m)|+|\inexp(g_m)|:1\leq i<j\leq r,1\leq m\leq r\}\,,
\]
and fix $\mu\geq\mu_0$.

Let $z_i(x)\in\C\{x\}$ be the convergent power series, and let $\hat{u}_i(x_{[i]},z)$, $\hat{a}_i(x_{[i]},z)$, $\hat{v}(x,z)$, and $\hat{h}(x,z)$ be the (vectors of) algebraic power series constructed above. To simplify notation, we will write $\bar{z}^\mu_i(x)$ for $z_i(x)-j^\mu z_i(x)$, where as before $j^\mu z_i(x)$ denotes the $\mu$-jet of $z_i$ as a power series in variables $x$.

For $t\in\C$, we define
\[
F_n(t,x)\coloneqq\prod_{k=1}^s G_k(t,x)\,,
\]
where
\begin{equation}
\label{eq:bigg}
G_k(t,x) \coloneqq x_n^{r_k}+\sum_{j=1}^{r_k}\hat{a}_{n-1,k,j}(x_{[n-1]},j^\mu z(x_{[n-1]})+t\bar{z}^\mu(x_{[n-1]}))\!\cdot\!x_n^{r_k-j}\,,
\end{equation}
and
\[
F_i(t,x) \coloneqq  x_i^{p_i}+\sum_{j=1}^{p_i}\hat{a}_{i-1,j}(x_{[i-1]}, j^\mu z(x_{[i-1]})+t\bar{z}^\mu(x_{[i-1]}))\!\cdot\!x_i^{p_i-j}\,,\quad i=1,\dots,n-1.
\]
Finally, we set $F_0(t)\equiv1$.
Now, since $u_i(0,0)=\hat{u}_i(0,z(0))\neq0$, $i=1,\dots,n-1$, it follows that the family $\{F_i(t,x_{[i]})\}$ is algebraically equisingular (with $|t|<R$, for any $R<\infty$).

Set $\hat{g}_k(x)\coloneqq G_k(0,x)$, and let $(\hat{X},0)$ be the Nash germ in $(\C^n,0)$ defined by $\hat{g}_1=\dots=\hat{g}_s=0$. Note that $g_k=G_k(1,x)$, $k=1,\dots,s$. Therefore, by Theorem~\ref{thm:Var}, there is a homeomorphism germ $h:(\C^n,0)\to(\C^n,0)$ such that $h((X,0))=(\hat{X},0)$.

By construction, we have $j^\mu\hat{g}_k=j^\mu{g_k}$ for $k=1,\dots,s$. Finally, as in the proof of Theorem~\ref{thm:CM-approx}, observe that the collection $\{\hat{g}_1,\dots,\hat{g}_s,\dots,\hat{g}_r\}$ forms a standard basis for the ideal $I_\mu$ that it generates (by~\eqref{eq:S-i-j}).
In particular, the set $\{\hat{g}_1,\dots,\hat{g}_r\}$ contains representatives of all the vertices of the diagram $\NN(I_\mu)$ (see Remark~\ref{rem:st-bases}(1)). Since, by construction and the choice of $\mu_0$, we have $\inexp(\hat{g}_k)=\inexp(g_k)$ for all $k$, it follows that $\NN(I_\mu)=\NN(I)$. Hence, $H_{\hat{X},0}=H_{X,0}$, by Lemma~\ref{lem:HS-diagram-complement}.
Note also that $I_\mu$ is, in fact, generated by $\{\hat{g}_1,\dots,\hat{g}_s\}$, since the remaining generators $\hat{g}_{s+1},\dots,\hat{g}_r$ are combinations of the former, by~\eqref{eq:g-s+p}.
This completes the proof in the complex case.

The real case follows from the complex one, since by \cite[\S6]{Var1}, if the distinguished polynomials $F_i$ of Section~\ref{sec:Var} have real coefficients, then the homeomorphism $h$ constructed in Varchenko's Theorem~\ref{thm:Var} is conjugation invariant.
\qed
\medskip

\bibliographystyle{amsplain}

\end{document}